\numberwithin{equation}{section}
\numberwithin{figure}{section}
\theoremstyle{plain}
\newtheorem{thm}{Theorem}
  \theoremstyle{plain}
  \numberwithin{thm}{section}
  \newtheorem{cor}[thm]{Corollary}
  \theoremstyle{plain}
  \newtheorem{lem}[thm]{Lemma}
  \theoremstyle{plain}
   \newtheorem{proposition}[thm]{Proposition}
    \newtheorem{definition}[thm]{Definition}
  \theoremstyle{remark}
  \newtheorem{rem}[thm]{Remark}
  \newtheorem{ex}[thm]{Example}
  \def\Ddots{\mathinner{\mkern1mu\raise\p@
\vbox{\kern7\p@\hbox{.}}\mkern2mu
\raise4\p@\hbox{.}\mkern2mu\raise7\p@\hbox{.}\mkern1mu}}
\newcommand{\eps}{\varepsilon}
\newcommand{\norm}[1]{\left\| #1 \right\|}
\newcommand{\mklm}[1]{\left\{ #1 \right\}}
\newcommand{\eklm}[1]{\left\langle #1 \right\rangle}
\renewcommand{\d}{\,d}
\newcommand{\N}{{\mathbb N}}
\newcommand{\Z}{{\mathbb Z}}
\newcommand{\C}{{\mathbb C}}
\newcommand{\Ccal}{{\mathcal C}}
\newcommand{\R}{{\mathbb R}}
\newcommand{\F}{{\mathcal F}}
\newcommand{\M}{{\mathcal M}}
\newcommand{\T}{{\mathcal T}}
\newcommand{\Q}{{\mathbb Q}}
\renewcommand{\O}{{\mathcal O}}
\newcommand{\1}{{\bf 1}}
\renewcommand{\epsilon}{\varepsilon}
\renewcommand{\rho}{\varrho}
\newcommand{\Cinft}{{\rm C^{\infty}}}
\newcommand{\CT}{{\rm C^{\infty}_c}}
\renewcommand{\L}{{\rm L}}
\newcommand{\Lcal}{{\mathcal L}}
\renewcommand{\S}{{\mathcal S}}
\newcommand{\GL}{\mathrm{GL}}
\newcommand{\PGL}{\mathrm{PGL}}
\newcommand{\SL}{\mathrm{SL}}
\newcommand{\SO}{\mathrm{SO}}
\newcommand{\SU}{\mathrm{SU}}
\newcommand{\g}{{\bf \mathfrak g}}
\renewcommand{\k}{{\bf \mathfrak k}}
\renewcommand{\t}{{\bf \mathfrak t}}
\newcommand{\p}{{\bf \mathfrak p}}
\newcommand{\Ad}{\mathrm{Ad}\,}
\newcommand{\ad}{\mathrm{ad}\,}
\newcommand{\sgn}{\mathrm{sgn}\,}
\newcommand{\id}{\mathrm{id}\,}
\renewcommand{\det}{\mathrm{det}\,}
\renewcommand{\Im}{\mathrm{Im}\,}
\newcommand{\dist}{\mathrm{dist}\,}
\newcommand{\Crit}{\mathrm{Crit}}
\DeclareMathOperator{\supp}{supp\,}
\DeclareMathOperator{\tr}{tr}
\DeclareMathOperator{\gd}{\partial}
\DeclareMathOperator{\grad}{grad}
\newcommand{\dbar}{{\,\raisebox{-.1ex}{\={}}\!\!\!\!d}}
\newcommand{\bdm}{\begin{displaymath}}
\newcommand{\edm}{\end{displaymath}}
\newcommand{\bq}{\begin{equation}}
\newcommand{\eq}{\end{equation}}
\newcommand{\bqn}{\begin{equation*}}
\newcommand{\eqn}{\end{equation*}}
\newcommand{\bsl}{\backslash}
\newcommand{\cR}{\mathcal R}
\newcommand{\bH}{\mathbb H}
\newcommand{\bA}{\mathbb A}
\newcommand{\dista}{\mathrm{dist}_1}
\def\bi{{\mathbf{i}}}
\def\bj{{\mathbf{j}}}
\def\bk{{\mathbf{k}}}
\def\uG{{\underline{G}}}
\begin{document}
\author{Pablo Ramacher and Satoshi Wakatsuki}
\title[Subconvex bounds for Hecke--Maass forms on compact arithmetic quotients]{Subconvex bounds for Hecke--Maass forms on compact arithmetic quotients of semisimple  Lie groups}
\address{Fachbereich 12 Mathematik und Informatik, Philipps-Universit\"at Marburg, Hans--Meerwein-Str., 35032 Marburg, Germany}
\email{ramacher@mathematik.uni-marburg.de}
\address{Faculty of Mathematics and Physics, Institute of Science and Engineering, Kanazawa University, Kakumamachi, Kanazawa, Ishikawa, 920-1192, Japan}
\email{wakatsuk@staff.kanazawa-u.ac.jp}

\date{September 8, 2018}

\begin{abstract}
Let $H$ be a semisimple algebraic group, $K$ a maximal compact subgroup of $G:=H(\R)$, and $\Gamma\subset H(\Q)$ a congruence arithmetic subgroup.
In this paper, we generalize existing subconvex bounds for Hecke--Maass forms on the  locally symmetric space $\Gamma \bsl G/K$  to corresponding bounds on the arithmetic quotient $\Gamma \bsl G$ for cocompact lattices using the spectral function of an elliptic operator. The bounds obtained  extend known subconvex bounds for automorphic forms to non-trivial $K$-types, yielding  subconvex bounds for new classes of automorphic representations, and constitute  subconvex bounds for eigenfunctions on compact manifolds with both positive and negative sectional curvature. We also obtain new subconvex bounds for holomorphic modular forms in the weight aspect.
\end{abstract}

\maketitle

\setcounter{tocdepth}{1}
\tableofcontents{}

\section{Introduction}
\label{sec:1}

Let $M$ be  a closed\footnote{ By a closed manifold we shall understand a compact boundaryless manifold.}  Riemannian manifold $M$ of dimension $d$ and $P_0:\Cinft(M) \rightarrow \L^2(M)$ an elliptic classical pseudodifferential operator on $M$ of degree $m$, where $\Cinft(M)$ denotes the space of smooth functions on $M$ and $\L^2(M)$ the space of square-integrable functions on $M$.  Assume that $P_0$  is positive and symmetric. Denote its  unique self-adjoint extension by  $P$ with  the $m$-th Sobolev space as domain, and let $\mklm{\phi_j}_{j\geq 0}$ be an orthonormal basis of $\L^2(M)$ consisting of eigenfunctions of $P$ with eigenvalues $\mklm{\lambda_j}_{j \geq 0}$ repeated according to their multiplicity.  By a classical result of Avacumovic, Levitan, and H\"ormander \cite{avacumovic, levitan52, hoermander68} one has for any $j \in \N$ the \emph{convex bound}\footnote{Here and in what follows we shall write $a \ll_\gamma b$ for two real numbers $a$ and $b$ and a variable $\gamma$, if there exists a constant $C_\gamma>0$ depending only on $\gamma$ such that $|a| \leq C_\gamma b$. If there are no relevant variables involved, we shall simply write $a\ll b$.}
\bq
\label{eq:ALH}
\norm{\phi_j}_\infty \ll \lambda_j^{\frac{d-1}{2m}}.
\eq
If the  $\phi_j$ are eigenfunctions of a larger family of commuting differential operators on $M$ containing $P_0$, this bound can be improved. Thus, assume that $M$ carries an isometric action of a compact Lie group $K$ such that all orbits have the same dimension $\kappa\leq d-1$.  Denote by $\widehat K$  the set of  equivalence classes of irreducible unitary  representations of $K$, which can be identified with the set of irreducible characters of $K$. Suppose further that $P$ commutes with the family of differential operators generated by the action of $K$, so that the eigenfunctions $\phi_j$ can be chosen to be compatible with the  Peter--Weyl decomposition of $\L^2(M)$ into $\sigma$-isotypic components $\L^2_\sigma(M)$,  where $\sigma \in \widehat K$. It was then shown in \cite{ramacher16, ramacher18} that the \emph{equivariant convex bound}
\bq
\label{eq:R}
\norm{\phi_j}_\infty \ll  \Big (d_\sigma \sup_{u \leq \lfloor{\kappa}/2+1\rfloor} \norm{D^u \sigma}_\infty\Big )^{1/2} \lambda_j^{\frac{d-\kappa-1}{2m}}, \qquad \phi_j \in \L_\sigma^2(M),
\eq
 holds, where $d_\sigma$ denotes the dimension of a representation of class $\sigma$, and $D^u$ is a differential operator of order $u$ on $K$. If $K=T$ is a torus, one actually has the almost sharp estimate 
\bq
\label{eq:R2}
\norm{\phi_j}_\infty \ll   \lambda_j^{\frac{d-\kappa-1}{2m}}, \qquad \phi_j \in \L_\sigma^2(M), \qquad \sigma \in \mathcal{W}_{\lambda_j},
\eq
where $\mathcal{W}_\lambda$ denotes the  subset of $K$-types occuring in the Peter-Weyl decomposition of $\L^2(M)$ that grow at most with rate 
$ \lambda^{1/m}/\log \lambda.$%, $|\Lambda_\sigma|$  the integral linear form on the Lie algebra of $T$ corresponding to $\sigma$, while $C>0$ is an arbitrary constant.
\footnote{The estimate is almost sharp in the sense that as a consequence of the equivariant Weyl law $T$-types in $\L^2(M)$ can grow at most with rate $\lambda_j^{1/m}$, see \cite{ramacher18}.}

The bounds \eqref{eq:ALH} and \eqref{eq:R} are  known to be sharp in the eigenvalue aspect on the standard $d$-sphere, but if the considered eigenfunctions are  joint eigenfunctions of an even larger family of commuting operators, they can be improved. Thus,  let $G$ be a semisimple real Lie group, $K$ a maximal compact subgroup of $G$, $\Gamma\subset G$ a lattice, and $Y:=\Gamma \bsl G / K$ the corresponding locally symmetric space of dimension $d$ and rank $r$. If  $\mklm{\psi_j}_{j\geq 0}$ constitutes an orthonormal basis in $\L^2(Y)$ of  simultaneous eigenfunctions  of the full ring of invariant differential operators on $Y$, which is isomorphic to a finitely generated polynomial ring in $r$ variables and contains the Beltrami--Laplace operator $\Delta$, Sarnak \cite{sarnak_letter} was able to show the \emph{spherical convex bound}
\bq
 \label{eq:S}
\norm{\psi_j|_{\Omega}}_\infty \ll_{\Omega} \lambda_j^{\frac{d-r}{4}}
\eq
 for arbitrary compacta $\Omega \subset Y$,  $\lambda_j$ being the Beltrami--Laplace eigenvalue of $\psi_j$. From an arithmetic point of view, there is still an additional family of commuting operators on $Y$ given by the Hecke operators, and in the case $G=\SL(2,\R)$ and $K=\SO(2)$, Iwaniec and Sarnak \cite{iwaniec-sarnak95} were able to strengthen the bound \eqref{eq:S} for certain compact locally symmetric spaces $Y=\Gamma \bsl \bH$ of rank $r=1$, given as quotients  of the complex upper half plane $\bH\simeq G/K$ by suitable congruence arithmetic lattices $\Gamma$, and proved for any $\eps>0$ and $j \in \N$ the substantially stronger \emph{spherical subconvex bound}
 \bq
 \label{eq:iwaniec-sarnak95} 
\norm{\psi_j}_\infty \ll_\eps \lambda_j^{\frac{5}{24}+\eps}, 
\eq
 provided that the $\psi_j$ are also eigenfunctions of the ring of Hecke operators on $\L^2(\Gamma \bsl \bH)$.  More generally, if $H$ is  a semisimple algebraic group over $\Q$ satisfying certain conditions, $\Gamma \subset H(\Q)$ an arithmetic congruence lattice, and $G=H(\R)$, Marshall \cite{Marshall2017} was able to strengthen the bound \eqref{eq:S}  and prove \emph{spherical subconvex bounds} of the form 
 \bq
 \label{eq:M}
\norm{\psi_j|_{\Omega}}_\infty \ll_\Omega \lambda_j^{\frac{d-r}{4}-\delta}
\eq
 for some $\delta >0$ and arbitrary compacta $\Omega \subset Y$, if the $\psi_j$ are also eigenfunctions of the ring of Hecke operators on $\L^2(Y)$, 
 generalizing previous work of Blomer-Maga \cite{BM1,BM2} and Blomer-Pohl \cite{BP}, among others. In fact, for negatively curved manifolds, much better bounds % than \eqref{eq:ALH} %of the form $\norm{\phi_j}_\infty \ll \lambda_j^{\eps}$ 
are expected to hold generically, the bound  % for generic sequences of eigenfunctions. In the case $d=2$, such a bound would imply the Lindel\"of hypothesis on the Riemann zeta function. 
 \eqref{eq:iwaniec-sarnak95} being the strongest known bound up to now. The estimates \eqref{eq:S}--\eqref{eq:M} represent bounds for  automorphic forms on $G$ which are right $K$-invariant, and for this reason are   called \emph{spherical}. \\ %Automorphic forms which are also eigenfunctions of a ring of Hecke operators are called \emph{Hecke--Maass} forms. 

In this paper, left $\Gamma$-invariant functions on $G$ which are simultaneous eigenfunctions of an invariant elliptic differential operator and some {module} of Hecke operators will be called \emph{Hecke--Maass forms of rank $1$}. This class encompasses the usual concept  of an automorphic form on  $G$, and coincides with it in the rank $1$ case, compare Section \ref{sec:autrepr} and \ref{sec:autreprII}. 
Nevertheless, note that a Hecke--Maass form of rank $1$ is not necessarily an eigenfunction of the full ring of invariant differential operators, since one can  choose a very small submodule of the ring of Hecke operators, see Remark \ref{rem:rank} for details. 
The goal of this paper is to extend the spherical  subconvex bounds \eqref{eq:iwaniec-sarnak95} and \eqref{eq:M} to  non-spherical situations, that is, to non-trivial $K$-types in the Peter-Weyl decomposition of $\L^2(\Gamma \bsl G)$ for a large class of compact arithmetic quotients $\Gamma \bsl G$, sharpening the bounds \eqref{eq:ALH} and \eqref{eq:R} in case that the eigenfunctions $\phi_j$ are Hecke--Maass forms.  \\

% The bound  \eqref{eq:iwaniec-sarnak95} constitutes a bound for classical automorphic forms on $\bH$ of weight zero, which correspond to\footnote{\bf Check this comment and the following!} automorphic forms on $G=\SL(2,\R)$ of trivial $K$-type, where $K=\SO(2)$, compare  \cite[Section 5]{borel97}.  
As our first main result, we extend the bound \eqref{eq:iwaniec-sarnak95} to automorphic forms on $G$ of arbitrary $K$-type and  Nebentypus character. Thus,  let $\cR$ be an Eichler order in an indefinite division quaternion algebra $A$ over $\Q$. Denote by $N(x)$  the reduced norm of an element $x\in A$, and write $\cR(m):=\mklm{\alpha \in \cR\mid  N(\alpha) =m}$ for any $m \in \N_\ast$. Choose an embedding $\theta:\sqcup_{m=1}^\infty \cR(m) \rightarrow G$, and set $\Gamma:=\theta(\cR(1))$. Then $\Gamma$ constitutes a congruence arithmetic subgroup, and $\Gamma\bsl \bH\simeq \Gamma \bsl G/ K$ becomes a compact hyperbolic surface. Now, let $\chi$ be a Nebentypus character on $\Gamma$, and denote by  $\L^2_\chi(\Gamma \bsl G)$ the Hilbert space of measurable functions on $G$ such that 
\bqn
f(\gamma x)=\chi(\gamma)\, f(x),  \qquad \gamma\in\Gamma, \,  x\in G, \qquad 
\norm{f}:=\bigg (\int_{\Gamma \bsl G} |f(x)|^2 dx\bigg ) ^{1/2} < \infty.
 \eqn
  The space $\L^2_\chi(\Gamma \bsl G)$ can be regarded as a closed subspace in $\L^2(\Gamma_\chi \bsl G)$, where $\Gamma_\chi:=\ker \chi$. Identifying $\cR(n)$ with its image $\theta(\cR(n))$ for each $n$ prime to a fixed natural number which depends only on $\cR$, the finite cosets $\Gamma\bsl \cR(n)$ give rise to  Hecke operators 
%\bqn
%(T^\chi_nf)(x):=\sum_{\alpha\in \Gamma\bsl \cR(n)} \overline{\chi(\alpha)}\, f(\alpha\cdot x),
%\eqn
on $\L^2_\chi(\Gamma \bsl G)$.  Now, with the identification $K\simeq S^1\simeq [0,2\pi)$,  any  $K$-type $\sigma_l \in \widehat K$ can be realized as  the character $\sigma_l(\theta)=e^{il\theta}$, $\theta \in [0,2\pi)$, $l \in \Z$, and we denote by $\L^2_{\sigma_l,\chi}(\Gamma \bsl G)$ the $\sigma_l$-isotypic component  of  $\L^2_\chi(\Gamma \bsl G)$.  It is then shown in Theorem \ref{thm:24.11.2016}   that for any orthonormal basis  $\mklm{\phi_j}_{j\geq 0}$ of $\L^2(\Gamma_\chi \bsl G)$ consisting of Hecke--Maass forms (of rank $1$) with Beltrami--Laplace eigenvalues $0 \leq \lambda_0 \leq \lambda_1 \leq \lambda_2 \leq \cdots$ and compatible with the Peter-Weyl decomposition one has the \emph{hybrid subconvex bound}
 \bq 
\label{eq:11.1.17a}
\norm{\phi_j}_\infty \ll_\eps  \,  \lambda_j^{\frac{5}{24}+\eps}, \qquad \phi_j \in \L^2_{\chi}(\Gamma\bsl G),
\eq
for arbitrary small  $\eps>0$  in the eigenvalue and isotypic aspect.  This bound is the first sharpening the bound \eqref{eq:R2} for arbitrary $K$-types. If $\sigma_l$ and $\chi$ are  trivial, one recovers the spherical subconvex bound \eqref{eq:iwaniec-sarnak95}. 
%Furthermore, allowing $\sigma_l$ to grow mildly by introducing the character family  $\mathcal{W}_\lambda:=\{\sigma_l\in \widehat K: |l| \ll \log \lambda\}$, one can infer from \eqref{eq:11.1.17a} the  bound
%\bqn 
%\norm{\phi_j}_\infty \ll_{\eps}   \lambda_j^{\frac{5}{24}+\eps}, \qquad \phi_j \in \L^2_{{\sigma_l},\chi}(\Gamma\bsl G), \,  \sigma_l \in \mathcal{W}_{\lambda_j}.
%\eqn
%On the other hand, we obtain in Theorem \ref{thm:27.5.2017} the weaker \emph{non-equivariant subconvex bound}\footnote{\bf This bound is now no longer of interest...}
%\bq
%\label{eq:28.5.2017a}
%\norm{\phi_j}_\infty \ll_\eps    \lambda_j^{\frac{7}{16}+\eps}, \qquad \phi_j \in \L^2_{\chi}(\Gamma\bsl G),
%\eq
%which, nevertheless, is valid for any Hecke--Maass form in  $\L^2_{\chi}(\Gamma\bsl G)$, and sharpens the bound \eqref{eq:ALH}.  
Note that  \eqref{eq:11.1.17a} is a  subconvex bounds on a manifold which does have both positive and negative sectional curvature.  %\footnote{\bf This seems REMARKABLE, eventually compare with other known subconvex bounds, like on the torus.} %, and only rely on the  symmetries encoded by the Hecke operators.% It is therefore conceivable that subconvex bounds   primarily arise due to  the presence of symmetries,  rather than the fact that a manifold is negatively curved. 
It is stated from the perspective of elliptic operator theory, which is the natural one in our approach, while in the theory of automorphic forms it is more common to work within a representation-theoretic framework, and use the Casimir operator $\Ccal$ of $G$ instead of the Beltrami--Laplace operator $\Delta$, the former being no longer elliptic.  But since on $\L^2_{{\sigma_l},\chi}(\Gamma\bsl G)$ the operators in question are related according to
$ 
\Delta =  -\Ccal + \frac {l^2} 4 \id ,
$
the bound   \eqref{eq:11.1.17a}  can be rephrased accordingly.  Thus,  for any Hecke eigenform $\phi \in \L^2_{{\sigma_l},\chi}(\Gamma\bsl G)$ satisfying $\norm{\phi}_{\L^2}=1$ and  $\Ccal \phi = \frac{s^2-1}8\phi$ one has the \emph{hybrid subconvex bound} 
 \bqn 
\norm{\phi}_\infty \ll_\eps     ( 1-s^2+2l^2 )^{\frac{5}{24}+\eps}, 
\eqn
see Theorem \ref{thm:28.8.2017}.   In this way, we obtain  subconvex bounds for new classes of automorphic representations, in particular for the discrete series $D_s$ and their limits $D_{\pm,0}$, as well as the principal series $H(1,s)$, compare Section \ref{sec:autrepr}. Let us note that for fixed $s$ we obtain the bound  $\norm{\phi_j}_\infty \ll_\eps   (1+|l|)^{\frac{5}{12}+\eps}$ for any $ \phi_j \in \L^2_{{\sigma_l},\chi}(\Gamma\bsl G)$. This agrees with results of Venkatesh \cite[p. 993]{venkatesh10}, though by work of  Reznikov \cite[Theorem 1.5]{reznikov08} one has in this case the much better bound  $\norm{\phi_j}_\infty \ll_\eps   (1+|l|)^{\frac{1}{3}+\eps}$. Nevertheless, our results  do  imply new results for a classical automorphic form $f:\bH \rightarrow \C$  of weight $l\in \N$ and arbitrary Nebentypus character, for which we show in \eqref{eq:3.9.2018} the subconvex bound
\bqn
\|{ f}\|_\infty \ll_\eps l^{\frac{5}{12}+\eps}
\eqn
 in the weight aspect. The best previously known  subconvex bound, proved by Das and Sengupta \cite{DS},  had the exponent   $\frac{1}{2}-\frac{1}{33}=\frac{31}{66}$.
 
In an analogous way, we are able to  derive equivariant and non-equivariant subconvex bounds for $G=\SU(2)$, $K=\SO(2)$, and $\Gamma:=\mklm{\pm 1}$  in the setting of \cite{LPS1,LPS2} by identifying $G$ with the group of units in the quaternion algebra over $\R$, and defining corresponding Hecke operators $T_n$ on $\L^2(\Gamma\bsl G)$. Thus, we obtain again  in Theorem \ref{thm:SO(3)equiv} the equivariant subconvex bound \eqref{eq:11.1.17a} for any simultaneous eigenfunction $\phi_j\in \L^2_{\sigma_l}(\Gamma\bsl G)$ of the Beltrami--Laplace operator  $\Delta$ on $G$  with eigenvalue $\lambda_j$ and the $T_n$, where now $\Gamma\bsl G\cong \SO(3)$. 
This generalizes a result of VanderKam \cite[Theorem 1.1]{VanderKam}, where the case $l=0$ with  $\L^2_{\sigma_0}(\Gamma\bsl G)\cong \L^2(\Gamma \bsl G/K) \cong \L^2(S^2)$ is treated, $S^2$ being the $2$-sphere. 
%Similarly, we are able to prove the  non-equivariant subconvex bound\footnote{\bf No longer of interest now...}
%\[
%\norm{\phi_j}_\infty\ll_\eps  \lambda_j^{\frac{19}{44}+\eps}, \qquad \phi_j\in \L^2(\Gamma\bsl G),
%\]
%for any simultaneous eigenfunction in $\L^2(\Gamma\bsl G)$ with eigenvalue $\lambda_j$ and arbitrary small $\eps>0$, see Theorem \ref{thm:SO(3)}.

Our second main result concerns bounds of the form \eqref{eq:M}. As before, let $H$ be a semisimple algebraic group over $\Q$ which is assumed to be connected in the sense of Zariski. Write $\bA_\mathrm{fin}$ for the finite adele ring of $\Q$ and  $\bA:=\R\times\bA_\mathrm{fin}$ for the adele ring.
Choosing an open compact subgroup $K_0$ in $H(\bA_\mathrm{fin})$, we obtain an arithmetic subgroup $\Gamma:=H(\Q)\cap(H(\R) K_0)$ in the semisimple Lie group $G=H(\R)$. 
Assume that $H(\bA)=H(\Q)(H(\R)K_0)$  and that $H(\Q)\bsl H(\bA)$ is compact, so that $\Gamma\bsl G$ is also compact.\footnote{Note that $H(\bA)=H(\Q)(H(\R)K_0)$ is satisfied for any $K_0$ if $H$ has the strong approximation property. In this case, $G=H(\R)$ can be any of the groups $\SL(n;\R), \SL(n,\C), \SL(n,\bH), \SU(m,n,\R)$, $n\geq 2,m\geq 1$, and their products, compare \cite[Section 2.3 and Theorem 7.12]{PR}.}
From the point of view of automorphic representations, one has a suitable family of Hecke operators on $\L^2(\Gamma\bsl G)$, which is given by unramified Hecke algebras over $\Q_p$ for infinitely many primes $p$  \cite{Marshall2017}.
Now, let $K$ be a maximal compact subgroup of $G$ and $\mklm{\phi_j}_{j\geq 0} $  an  orthonormal basis of $\L^2(\Gamma \bsl G)$ consisting of Hecke--Maass forms of rank $1$ with respect to  an elliptic  left-invariant differential operator $P_0$ on $\Gamma \bsl G$ of order $m$ which commutes with the right regular representation of $K$. Assume that $P_0$ is positive and symmetric, and that the cosphere bundle defined by its principal symbol is strictly convex. Then, assuming the condition (WS) made in \cite{Marshall2017},  we show in Theorem \ref{thm:general1} that there exists a constant $\delta>0$ independent of $\sigma$ such that one has the \emph{equivariant subconvex bound} 
\bq
\label{eq:11.1.17b}
\|\phi_j\|_\infty \ll \, \sqrt {d_\sigma \sup_{u \leq \big \lfloor\frac{\dim K}2+1\big\rfloor} \norm{D^u \sigma}_\infty} \, \lambda_j^{\frac{\dim G/K-1}{2m}-\delta}, \qquad \phi_j \in \L^2_\sigma(\Gamma \bsl G),
\eq
where $\lambda_j$ denotes the spectral eigenvalue of $\phi_j$ with respect to $P_0$; if $K=T$ is a torus, one has the stronger estimate
\[
\|\phi_j\|_\infty \ll  \, \lambda_j^{\frac{\dim G/K-1}{2m}-\delta}, \qquad \phi_j \in \L^2 (\Gamma \bsl G).
\]
The bound \eqref{eq:11.1.17b}  bound  sharpens the bound \eqref{eq:R} for a large class of examples. If $\sigma$ is trivial, it is implied by  \eqref{eq:M}.   An example would be given by $H=\SL(1,D)$, where $D$ is any central division algebra of index $n$ over $\Q$, and $G=\SL(n,\R)$.
% Again, we can deduce from this uniform bounds. In fact,  it is shown in Remark \ref{rem:22.5.2017}  that \eqref{eq:11.1.17b} implies
%\bqn 
%\norm{\phi_j}_\infty \ll_\eps  \lambda_j^{\frac{\dim G/K-1}{2m}-\delta} \log \lambda_j, \qquad \phi_j\in \L^2_\sigma(\Gamma\bsl G), \, \sigma \in \mathcal{W}_{\lambda_j},
%\eqn
%where now $\mathcal{W}_\lambda:=\{\sigma\in \widehat K: |\Lambda_\sigma| \ll \log \lambda\}$,  $\Lambda_\sigma$ being the highest weight corresponding to $\sigma$ in  the Cartan-Weyl classification.
Furthermore, we show in Theorem \ref{thm:28.5.2017} for some $\delta >0$ the weaker \emph{non-equivariant subconvex bound}
\bq
\label{eq:28.5.2017b}
\|\phi_j\|_\infty \ll \lambda_j^{\frac{\dim G-1}{2m}-\delta}, \qquad \phi_j \in \L^2(\Gamma \bsl G),
\eq
for an orthonormal basis of $\L^2(\Gamma \bsl G)$ consisting of suitable Hecke--Maass forms, sharpening the bound \eqref{eq:ALH}, but without assuming the condition (WS) of \cite{Marshall2017}.  An example is again $H=\SL(1,D)$, where now $D$ is any central division algebra over $\Q$,  except when $G=\SL(1,\bH)$. As before, \eqref{eq:11.1.17b} and \eqref{eq:28.5.2017b} constitute first arithmetic subconvex bounds  on a large class of manifolds which are both positively and negatively curved, and  if $P_0$ is the Beltrami--Laplace operator,  the bounds can be rephrased in terms of the  eigenvalues of the Casimir operator of $G$. Indeed, by Theorem \ref{thm:26.9.2017} we have  for each $\phi_j\in \L^2_\sigma(\Gamma \bsl G)$ with Casimir eigenvalue $\mu_j$ the bound
\[
\|\phi_j\|_\infty \ll \sqrt {d_\sigma \sup_{u \leq \lfloor \frac{\dim K}2+1\rfloor} \norm{D^u \sigma}_\infty} \, \, (-\mu_j+2\mu_\sigma)^{\frac{\dim G/K-1}{4}-\delta}, \qquad \phi_j \in \L^2_\sigma(\Gamma \bsl G),
\]
provided that  $H=\mathrm{Res}_{F/\Q}\underline{G}$ and {\rm (WS)} is fulfilled, while in general
\[
\|\phi_j\|_\infty \ll  (-\mu_j+2\mu_\sigma)^{\frac{\dim G-1}{4}-\delta}, 
\]
  $\mu_\sigma$ being the eigenvalue of the Casimir operator of $K$ on $\sigma$.  If $K=T$ is a torus,
    \[
\|\phi_j\|_\infty \ll  \, (-\mu_j+2\mu_\sigma)^{\frac{\dim G/K-1}{4}-\delta}.
\]

\medskip

Let us briefly say a few words about the methods employed. While in the theory of automorphic forms representation-theoretic tools  prevail, our analysis is mainly based on the spectral theory of elliptic operators, and uses Fourier integral operator methods. Thus, let  $P$ be an elliptic pseudodifferential operator on a closed Riemannian manifold $M$  as above. Our main tool  is the \emph{spectral function} $e(x,y,\mu)$ of the $m$-th root $Q:=\sqrt[m]{P}$ of $P$ given by
\bqn
e(x,y,\mu):=\sum_{\mu_j\leq \mu} \phi_j(x) \overline{\phi_j(y)} \in \Cinft(M \times M), \qquad \mu \in \R, \quad \mu_j:=\sqrt[m]{\lambda_j}.
\eqn
In the spherical situations  \cite{iwaniec-sarnak95,BM1,BM2,BP, Marshall2017} examined before, a crucial role is played by asymptotics for spherical functions, see \cite[Eq. (1.3)]{iwaniec-sarnak95} and \cite[Eq. (8)]{Marshall2017}. Since we cannot rely on them in our setting\footnote{Compare  also \cite[Section 4.4]{MatzProc}.}, we consider instead  the spectral expansion of $e(x,y,\mu)$ itself and the  asymptotic behaviour   of
$$
s_\mu(x,y):=e(x,y,\mu+1) - e(x,y,\mu),
$$
which represents the Schwartz kernel of  the spectral projection $s_\mu $ onto the sum of eigenspaces of $Q$ with eigenvalues in the interval  $(\mu, \mu+1]$. More precisely, if $M$ carries an effective and isometric action of a compact Lie group $K$ and  $\sigma \in \widehat K$, denote by $\Pi_\sigma$ the projector onto the $\sigma$-isotypic component in the Peter-Weyl decomposition of  $\L^2(M)$. In order to show the $\L^\infty$-bounds \eqref{eq:R}, and analogous equivariant convex $\L^p$-bounds, an asymptotic formula for the Schwartz kernel of $s_\mu \circ \Pi_\sigma$, or rather of $\widetilde s_\mu \circ \Pi_\sigma$, where $\widetilde s_\mu$ represents certain smooth approximation to $s_\mu$,  was derived  in \cite[Corollary 2.2. and Theorem 3.3]{ramacher16} in a neighbourhood of the diagonal relying on the theory of Fourier integral operators. Now, let $G$ be a   semisimple Lie group with finite center, $\Gamma $ a discrete cocompact subgroup,  and  $K$ a maximal compact subgroup of $G$. Let $\widehat \Gamma$ denote the set consisting of characters of $\Gamma$ of finite order. For $\chi \in \widehat \Gamma$,  introduce on $\L^2(\Gamma_\chi \bsl G)$  the Hecke operators   $\T^\chi_{\Gamma \beta \Gamma}$ 
\bqn
 (\T^\chi_{\Gamma\beta\Gamma}f)(x):=[\Gamma:\Gamma_\chi]^{-1}\sum_{\alpha\in\Gamma_\chi\bsl\Gamma\beta\Gamma} \overline{\chi(\alpha)}\,  f(\alpha \cdot x),
\eqn
where $\beta$ belongs to a certain set containing the commensurator $C(\Gamma)$ of $\Gamma$. Based on the asymptotics for the kernel of ${\widetilde s_\mu \circ \Pi_\sigma}$ mentioned above,  we deduce in Proposition \ref{thm:15.09.2016} for any  small $\delta >0$ and some constant $C>0$ the equivariant bound
\begin{gather*}
K_{\T^\chi_{\Gamma \beta \Gamma} \circ \tilde s_\mu \circ \Pi_\sigma}(x,x)\ll  \frac{d_\sigma}{[\Gamma:\Gamma_\chi]} \,  \mu^{\dim G/K-1}   \sup_{u \leq \lfloor \frac{\dim K}2+1\rfloor}\norm{D^u \sigma}_\infty  M(x,\beta,\delta)\\  +  \frac{d_\sigma}{[\Gamma:\Gamma_\chi]} \, \mu^{\frac{\dim G/K -1}2}  \sup_{u \leq \big \lfloor\frac{\dim K}2+1\big\rfloor} \norm{D^u \sigma}_\infty \int_\delta^{C} s^{-\frac 12} dM(s)  
\end{gather*}
uniformly in $x \in \Gamma_\chi \bsl G$ for the Schwartz kernel of $\T^\chi_{\Gamma \beta \Gamma} \circ \tilde s_\mu \circ \Pi_\sigma$, where we introduced the lattice point counting function 
\begin{align*}
M(\delta):=M(x,\beta,\delta)&:=\# \mklm{\alpha \in \Gamma_\chi\bsl\Gamma\beta\Gamma\mid  \, \dist(xK,\alpha \cdot xK)^{\dim G/K-1} <\delta }
\end{align*}
given in terms of the distance function on the Riemannian symmetric space $G/K$. In case that $K=T$ is a torus, a corresponding better estimate holds.
From this, we obtain the subconvex bounds \eqref{eq:11.1.17a} and \eqref{eq:11.1.17b} by using known uniform upper bounds \cite{iwaniec-sarnak95,Marshall2017} for $M(x,\beta,\delta)$ combined with arithmetic amplification. 
%Similarly, we show in Proposition \ref{thm:kernelasymp} the non-equivariant bound 
%\begin{align*}
%\begin{split}
%K_{\T^\chi_{\Gamma \beta \Gamma} \circ \widetilde s_\mu }(x,x)& \ll   \frac{\mu^{d-1}}{[\Gamma:\Gamma_\chi]} \M(x,\beta,\mu^{-1}) + \frac{\mu^{\frac{d-1}2}}{[\Gamma:\Gamma_\chi]}\int_{\mu^{-1}} ^C s^{-\frac{d-1}2} d\M(s),
%\end{split}
%\end{align*}
%where now
%$$\M(\delta):=\M(x,\beta,\delta):=\#\mklm{\alpha \in \Gamma_\chi \backslash \Gamma \beta \Gamma\mid  \dist(x,\alpha \cdot x)<\delta},$$ 
 The bound \eqref{eq:28.5.2017b} is inferred by analogous methods. In both cases, it is crucial to control the  caustic behaviour of the kernels of ${\tilde s_\mu \circ \Pi_\sigma}$ and ${\tilde s_\mu}$  near the diagonal as $\mu \to +\infty$, respectively.

\medskip

Let us close this introduction with some comments. There  exist several variants of the bounds \eqref{eq:iwaniec-sarnak95}, beginning with  \cite[Appendix]{iwaniec-sarnak95},  where the non compact hyperbolic surface $\SL(2,\Z)\bsl \bH$ is considered. On the other hand, bounds  in  the level aspect  are shown in  \cite{TemplierSelecta} for compact locally symmetric spaces of arithmetic type, while    bounds in the  eigenvalue and level aspect are derived for the modular surfaces $\Gamma_0(N)\bsl \bH$  in \cite{BH,Templier} and other papers. It is likely that our work can be extended to these settings, and we plan to deal with these questions in a future paper.  Also, we intend to  
 widen our results  to Hecke--Maass forms of rank $r$, that is, simultaneous eigenfunctions of the Hecke operators and the full ring of invariant differential operators associated to the center of the universal envelopping algebra of the complexification of the Lie algebra of $G$. For such forms,  the exponent $-1/2m$ in  \eqref{eq:11.1.17b} and \eqref{eq:28.5.2017b} should be improvable by a factor $r$. Finally, we expect the factor $\sqrt {d_\sigma \sup_{u \leq \big \lfloor\frac{\dim K}2+1\big\rfloor} \norm{D^u \sigma}_\infty}$  in  \eqref{eq:11.1.17b} to be improvable to   $d_\sigma$.

\medskip

This paper is structured as follows. In Section \ref{sec:hecke}, we introduce Hecke operators with character on semisimple Lie groups with finite center, in Section \ref{sec:FIO} we give a  description of the asymptotic behaviour of  spectral function of an elliptic operator by means of  Fourier integral operators, and explain how  convex bounds  can be deduced from this  in equivariant and non-equivariant situations.  Based on these results, we derive  spectral asymptotics for kernels of Hecke operators  in Section \ref{sec:4}. Relying on the latter, we finally prove    subconvex bounds for arithmetic congruence lattices in $\SL(2,\R)$, $\SO(3)$,  and a large class of  semisimple algebraic groups in Sections \ref{sec:IS}, \ref{sec:SO(3)},  and \ref{sec:6}, respectively. Throughout the paper, $\N:=\mklm{0,1,2,3,\dots}$ will denote the set of natural numbers, while $\N_\ast:=\mklm{1,2,3,\dots}$. 

\medskip

{\bf Acknowledgements}. We would like to thank Valentin Blomer and Simon Marshall for their comments on an earlier draft of this paper.  Besides, the second author would like to thank Jasmin Matz for her advice on spherical functions. He is partially supported by the JSPS Grant-in-Aid for Scientific Research (No. 15K04795, 18K03235).

\section{Hecke operators with character on semisimple Lie groups}
\label{sec:hecke}

To introduce our setting, let $G$ be a real semisimple Lie group  with finite center  and Lie algebra $\g$.  Denote by  $\langle X,Y\rangle := \tr \, (\ad X\circ \ad Y)$ the Cartan-Killing form   on $\g$ and by $\theta$   a Cartan involution  of $\g$. Let 
\bqn
\label{eq:cartan}
\g = \k\oplus\p
\eqn
be  the Cartan decomposition of $\g$ into the eigenspaces of  $\theta$, corresponding to the eigenvalues  $+1$ and $-1$ , respectively, and denote the  maximal compact subgroup of $G$ with Lie algebra $\k$ by $K$. Put $\langle X,Y\rangle _\theta:=-\langle X,\theta Y\rangle $. Then $\langle \cdot,\cdot \rangle _\theta$ defines a left-invariant Riemannian metric on $G$,  which in general will possess some strictly positive sectional curvature,  compare Milnor \cite[p.\ 298 and p.\ 317]{milnor76}. Dividing by the $K$-action,  the quotient $G/K$ becomes a Riemannian symmetric space of non-positive sectional curvature.   With respect to the left-invariant metric on $G$, a distance function   $\dist(g,h)$ is defined  on each connected component of $G$  as  the geodesic distance  between two points $g,h$ in that component. Note that $\dist(g_1g,g_1h) = \dist(g,h)$ for all $g_1\in G$.  In contrast to the Killing form,  $\langle \cdot,\cdot \rangle _\theta$ is no longer $\Ad(G)$-invariant, but  still $\Ad(K)$-invariant, so that $\dist (gk,hk)=\dist (g,h)$ for all $k \in K$.

Next,  let $X_1, \dots X_{\dim \p}$ be an orthonormal basis of $\p$ and $Y_1, \dots, Y_{\dim \k}$ an orthonormal basis for $\k$ with respect to $\langle \cdot ,\cdot \rangle _\theta$. If $\Omega$ and $\Omega_K$ denote the Casimir elements of $G$ and $K$, one has
 \bq
 \label{eq:casimir} 
 \Omega= \sum _{i=1}^{\dim \p} X_i^2- \sum _{i=1}^{\dim \k} Y_i^2, \qquad \Omega_K=-\sum _{i=1}^{\dim \k} Y_i^2,
 \eq
and we put $\Theta:=-\Omega+2\Omega_K$. %=-\sum _{i=1}^{\dim \p} X_i^2- \sum _{i=1}^{\dim \k} Y_i^2. 
 Then  $dR(\Theta)$ is the Beltrami--Laplace operator $\Delta$ on $G$ with respect to the left invariant metric defined by $\langle \cdot, \cdot \rangle_\theta$,  while $\Ccal:=dR(\Omega)$ represents the Casimir operator,  $R$ being the right regular representation of $G$ on $\Cinft(G)$, 
%$dR(P)$ is a strongly elliptic operator associated to $R$, and generates a strongly continuous semigroup which coincides with  the classical heat semigroup $e^{-t \Delta_G}$
see \cite[Section 3]{mueller98} and \cite[Section 2.10]{borel97}.  Thus, 
\bq
\label{eq:casimir-laplace}
\Delta=-\Ccal+2dR(\Omega_K),
\eq
all three operators commuting with each other.

We consider now  a discrete  cocompact subgroup $\Gamma$ of $G$, together with the set $\widehat \Gamma$ of its characters of finite order, and let $\chi\in \widehat \Gamma$. %  be an irreducible  character, which corresponds to an equivalence class of  $1$-dimensional unitary representations of $\Gamma$. 
Then $\Gamma_\chi:= \ker \chi$ is a subgroup of finite index in $\Gamma$ and the quotient  $\Gamma_\chi \backslash G$ a compact manifold without boundary. By requiring that the projection $G \rightarrow \Gamma_\chi \bsl G$ is a Riemannian submersion, we obtain a Riemannian structure on $\Gamma_\chi \backslash G$ which locally has the same curvature than $G$. Furthermore, the Riemannian structure on $G/K$ induces a Riemannian metric on $\Gamma_\chi \bsl G/K$, becoming a   locally symmetric space of negative curvature. In both cases, $\dist$ induces corresponding distances on $\Gamma_\chi \bsl G$ and $\Gamma_\chi \bsl G/K$. 

Before we proceed, note that due to the compactness of  $\Gamma_\chi\bsl G$,  the right regular representation of $G$ on $\L^2(\Gamma_\chi \bsl G)$ decomposes into an orthogonal direct sum of countably many irreducible unitary representations  with finite multiplicities, that is,
\bq
\label{eq:Gdecomp}
\L^2(\Gamma_\chi \bsl G)\cong \bigoplus_{\pi\in\widehat{G}} m(\pi,\Gamma_\chi)\cdot \pi,
\eq
where $\widehat{G}$ denotes the unitary dual of $G$ and $m(\pi,\Gamma_\chi)$ is a non-negative integer, see \cite{GGP}.
Furthermore, both the spectra of $\Delta$ and $\mathcal{C}$ in $\L^2(\Gamma_\chi \bsl G)$ are discrete. Note that the eigenvalues of $\Delta$ are positive, while the ones of $\Ccal$ can be both negative and positive.

 In what follows, we introduce Hecke operators on $\Gamma \bsl X$, where $X:=G$ or $G/K$, following \cite[Section 2]{Hoffmann}, and consider the commensurator 
\[
C(\Gamma):=\{ g\in G \mid \text{$\Gamma$ is commensurable with $g^{-1}\Gamma g$} \}
\]
of $\Gamma$,  where we say that two subgroups $\Gamma_1$ and $\Gamma_2$ are \emph{commensurable} iff the indices $[\Gamma_1:\Gamma_1\cap\Gamma_2]$ and $[\Gamma_2:\Gamma_1\cap\Gamma_2]$ are finite. Let $\beta\in C(\Gamma)$. Since the mapping
\[
(\Gamma\cap \beta^{-1}\Gamma\beta)\bsl\Gamma \ni (\Gamma\cap \beta^{-1}\Gamma\beta)\gamma \mapsto \Gamma\beta\gamma \in \Gamma\bsl \Gamma \beta \Gamma
\]
is bijective, the double coset $\Gamma \beta \Gamma$ is a finite union of right cosets of $\Gamma$, that is, there exist representative elements $\beta_1$, $\beta_2,\dots,\beta_t$ in $\Gamma\beta\Gamma$ such that
\[
\Gamma\beta\Gamma=\bigsqcup_{j=1}^t\Gamma \beta_j.
\]
One can then associate to each double coset a linear operator $T_{\Gamma\beta\Gamma}$ on $\L^2(\Gamma \bsl X)$ by setting 
\[
T_{\Gamma\beta\Gamma}: \L^2(\Gamma \bsl X) \longrightarrow \L^2(\Gamma \bsl X), \quad   (T_{\Gamma\beta\Gamma}f)(x):=\sum_{j=1}^t f(\beta_j \cdot x),
\]
 where $\beta_j \cdot  x \equiv \Gamma \beta_j \cdot  \Gamma x:= \Gamma \beta_j x$ depends on the choice of the representative $x$, but the  sum does not depend on the choice of the representatives $x$ and   $\beta_j$. Summing up, one  writes\footnote{Here and above $\alpha \equiv \Gamma \alpha$ (resp. $\beta_j\equiv \Gamma\beta_j $) and $x \equiv \Gamma x$ are considered both as right cosets of $\Gamma$ and representatives in $G$, and the products  $\alpha x$ (resp. $\beta_j x$) are taken in $G$.}
\[
(T_{\Gamma\beta\Gamma}f)(x)=\sum_{\alpha\in \Gamma\bsl \Gamma\beta\Gamma} f(\alpha \cdot  x),
\]
and calls   $T_{\Gamma\beta\Gamma}$  a \emph{Hecke operator}.

 If a subset $U$ of $C(\Gamma)$ is decomposed into a finite disjoint union of double cosets of $\Gamma$,  a linear operator $T_U$ can be  defined in the same manner according to 
\bq
\label{eq:heckeop}
T_U:=\sum_{k=1}^uT_{\Gamma\beta_k\Gamma} \qquad \quad U=\bigsqcup_{k=1}^u \Gamma\beta_k\Gamma,  \quad \beta_k\in C(\Gamma).
\eq
More generally, one can introduce Hecke operators as follows. 
Write $H(\Gamma,C(\Gamma))$ for the space of left and right $\Gamma$-invariant $\C$-valued functions $h$ on $C(\Gamma)$ such that the support of $h$ is included in a finite union of double $\Gamma$-cosets. Endowed with the convolution product
\[
h_1*h_2(x):=\sum_{y\in \Gamma\bsl C(\Gamma)}h_1(y)\, h_2(xy^{-1}), \qquad h_1,h_2\in H(\Gamma,C(\Gamma)),
\]
 $H(\Gamma,C(\Gamma))$ becomes an associative algebra over $\C$ with the characteristic function $1_\Gamma$ of $\Gamma$ as  unit element.
For each $h \in H(\Gamma,C(\Gamma))$, a linear operator $T_h$ on $\L^2(\Gamma \bsl X)$ can then be  defined by
\[
(T_h f)(x):=\sum_{\alpha\in\Gamma\bsl C(\Gamma)}h(\alpha) \, f(\alpha\cdot x),
\]
and one has $T_{h_1*h_2}=T_{h_1}\circ T_{h_2}$.
If $U$ is as above and $h$ is the characteristic function of $U$, then it is obvious that $T_h$ equals $T_U$.
We call $H(\Gamma,C(\Gamma))$ the \emph{Hecke algebra} and refer the reader to \cite[Section 2]{Hoffmann} for details. Next, let us introduce Hecke operators with character.
Assume that $G$ is a subgroup of another group $G'$.
Let $\Xi$ be a sub-semigroup of $G'$ containing $\Gamma$.
We suppose that $\chi$ can be extended to $\Xi$, and that there exists a homomorphism $\psi:\Xi\to C(\Gamma)$ such that $\psi|_\Gamma$ is the identity map, and $\alpha x\alpha^{-1}=\psi(\alpha)x\psi(\alpha)^{-1}$ holds for any $\alpha\in\Xi$, $x\in G$.
In particular, for $\alpha$, $\beta\in\Xi$ we have $\chi(\alpha\beta)=\chi(\alpha)\, \chi(\beta)$, while the inverse element $\alpha^{-1}$ does not always belong to $\Xi$.

\begin{ex}
One of the main examples we are having in mind is $G':=\GL(n,\R)$ with 
\[
 \Xi:=\{ \alpha=(\alpha_{ij})\in M(n,\Z)\mid  \det(\alpha)>0, \;\;   (\alpha_{11},N)=1, \; \; \alpha_{j1}\equiv 0 \mod N \;\; (2\leq j\leq n)  \}, 
\]
\[
\chi((\alpha_{ij})):=\omega(\alpha_{11}),\quad G:=\SL(n,\R), \quad \Gamma:=G\cap \Xi,\quad \psi(\alpha):=\det(\alpha)^{-1/n}\alpha,
\]
where $\omega$ is a Dirichlet character on $(\Z/N\Z)^\times$.
\end{ex}

Let us now define a  left action of $\Xi$ on $G$ by setting $\alpha\cdot x:=\psi(\alpha)x$. For a fixed $\beta \in \Xi$ we can then define the \emph{Hecke operator with character}
\bq
\label{eq:heckechar0}
\T^\chi_{\Gamma\beta\Gamma}: \L^2(\Gamma_\chi \bsl X) \longrightarrow \L^2(\Gamma_\chi  \bsl X), \quad   (\T^\chi_{\Gamma\beta\Gamma}f)(x):=[\Gamma:\Gamma_\chi]^{-1}\sum_{\alpha\in\Gamma_\chi\bsl\Gamma\beta\Gamma} \overline{\chi(\alpha)}\,  f(\alpha \cdot x),
\eq
where we took into account that $\Gamma_\chi  \bsl \Gamma \beta \Gamma\subset \Xi$. By  definition we have $\T^\chi_{\Gamma\beta\Gamma}=T_h$ for some $h\in H(\Gamma_\chi,C(\Gamma))$ satisfying $h(\gamma_1 x \gamma_2)=\overline{\chi(\gamma_1\gamma_2)}h(x)$ for any  $\gamma_1,\gamma_2\in\Gamma$ and  $x\in C(\Gamma)$. Furthermore, for given  $\beta_j\in\Xi$  and $h_j\in H(\Gamma_\chi,C(\Gamma))$ with  $T_{h_j}=\T^\chi_{\Gamma\beta_j\Gamma}$,  the convolution $h_1*h_2$ also satisfies the latter condition.
Thus, there exist elements $l\in\N$, $a_u\in\C$, and $\alpha_u\in\Xi$ such that
\bq
\label{eq:16.2.2017}
\T^\chi_{\Gamma\beta_1\Gamma}\circ \T^\chi_{\Gamma\beta_2\Gamma}=\sum_{u=1}^l a_u \T^\chi_{\Gamma\alpha_u\Gamma}.
\eq
Next,  denote by  $\L^2_\chi(\Gamma \bsl X)$ the Hilbert space of measurable functions on $X$ such that 
\bq
\label{eq:20.11.2016}
f(\gamma x)=\chi(\gamma)\, f(x),  \qquad \gamma\in\Gamma,\quad x\in X,
\eq
and 
\bq 
\label{eq:21.11.2016}
\norm{f}:=\bigg (\int_{\Gamma \bsl X} |f(x)|^2 dx\bigg ) ^{1/2} < \infty,
\eq
which is well-defined since $|\chi(\gamma)|=1$ for $\gamma \in \Gamma$, compare \cite[p. 228]{Miyake}.\footnote{Note that for $\gamma \in \Gamma_\chi$ condition  \eqref{eq:20.11.2016} reads $f(\gamma x) =f(x)$. Therefore, instead of $\L^2_\chi(\Gamma \bsl X)$ one could also consider the closed subspace  of $\L^2(\Gamma_\chi \bsl X)$ that consists of functions satisfying  \eqref{eq:20.11.2016}.} Notice that $f \in \L^2_\chi(\Gamma \bsl X)$ implies $|f| \in \L^2(\Gamma \bsl X)$. 
If $\chi$ is trivial, then  $\L^2_\chi(\Gamma \bsl X)=\L^2(\Gamma \bsl X)$. Since $\Gamma_\chi$ is a normal subset of $\Gamma$ we have
\bq
\label{eq:directsum}
\L^2(\Gamma_\chi \bsl X)\cong \bigoplus_{\chi'  \in \, \widehat{\Gamma/\Gamma_\chi}}\L^2_{\chi'}(\Gamma \bsl X),
\eq
where we regard $\widehat{\Gamma/\Gamma_\chi}$ as a subset of $\widehat{\Gamma}$, compare \cite[Lemma 4.3.1]{Miyake}. In particular, because $\chi \in \widehat{\Gamma/\Gamma_\chi}$,   $\L^2_\chi(\Gamma \bsl X)$ is  a closed subspace in $\L^2(\Gamma_\chi \bsl X)$, and 
for a fixed $\beta \in \Xi$, the operator $\T^\chi_{\Gamma\beta\Gamma}$ restricts to the linear operator
\bq
\label{eq:heckechar}
T^\chi_{\Gamma\beta\Gamma}: \L^2_\chi(\Gamma \bsl X) \longrightarrow \L^2_\chi(\Gamma \bsl X), \quad   (T^\chi_{\Gamma\beta\Gamma}f)(x):=(\T^\chi_{\Gamma\beta\Gamma}f)(x)=\sum_{\alpha\in\Gamma\bsl\Gamma\beta\Gamma} \overline{\chi(\alpha)}\,  f(\alpha \cdot x).
\eq
Notice that for each $\chi'$ in $\widehat{\Gamma/\Gamma_\chi}$ with $\chi'\neq \chi$ and each function $f \in \L^2_{\chi'}(\Gamma\bsl X)$ we have
\bq
\label{eq:vanish}
\T_{\Gamma\beta\Gamma}^\chi f(x)=[\Gamma:\Gamma_\chi]^{-1}\sum_{\alpha_1\in\Gamma_\chi\bsl\Gamma}
\sum_{\alpha_2\in\Gamma\bsl\Gamma\beta\Gamma}
\overline{\chi(\alpha_1\alpha_2)}\chi'(\alpha_1) f(\alpha_2\cdot x)=0
\eq
by the orthogonality relations for characters. 
Further,  the projection of $\L^2(\Gamma_\chi \bsl X)$ onto $\L^2_{\chi}(\Gamma \bsl X)$ is given by the Hecke operator $T_{h(\chi)}$,  where  $h(\chi)\in H(\Gamma_\chi, \Xi)$ is the function  
\bq
\label{eq:heckeproj}
h(\chi): \,  x \, \longmapsto \, [\Gamma:\Gamma_\chi]^{-1} \sum_{\alpha\in \Gamma_\chi\bsl \Gamma} \overline{\chi(\alpha)} \, \1_{\Gamma_\chi\alpha}(x),
\eq
$\1_{\Gamma_\chi\alpha}$ being the characteristic function of the coset ${\Gamma_\chi\alpha}$.
Thus, one obtains the commutative diagram

\medskip

\bq
\label{diagram}
\begin{CD}
\L^2(\Gamma_\chi\bsl X) @>\T^\chi_{\Gamma \beta \Gamma}>> \L^2(\Gamma_\chi\bsl X) \\
@VVT_{h(\chi)} V @VVT_{h(\chi)}V \\
\L^2_\chi(\Gamma\bsl X) @>T^\chi_{\Gamma \beta \Gamma}>> \L^2_\chi(\Gamma\bsl X)
\end{CD}
\eq

\bigskip

\noindent
and in view of \eqref{eq:vanish} we have  
$
T_{h(\chi)} \circ  T^\chi_{\Gamma \beta \Gamma} \circ T_{h(\chi)}= \T^\chi_{\Gamma \beta \Gamma}. 
$

\section{The  spectral function of an elliptic operator and convex bounds for eigenfunctions}
\label{sec:FIO}

The main tool underlying our analysis is the spectral function of an elliptic operator on a smooth manifold, which contains essential  information on the spectrum. For large spectral parameters, an asymptotic description of it can be derived within the theory of Fourier integral operators, yielding in particular convex bounds for eigenfunctions. In what follows, we shall briefly recall the main arguments in non-equivariant and equivariant situations, and provide the results that will be needed later.

\subsection{The spectral function and convex bounds for eigenfunctions}\label{sec:spec.funct}

Let $M$ be  a closed  Riemannian manifold of dimension $d$  and $P_0$ an elliptic classical pseudodifferential operator on $M$ of degree $m$, which is assumed to be  positive and symmetric. Denote its  unique self-adjoint extension by  $P$, and let $\mklm{\phi_j}_{j\geq 0}$ be an orthonormal basis of $\L^2(M)$ consisting of eigenfunctions of $P$ with eigenvalues $\mklm{\lambda_j}_{j \geq 0}$ repeated according to their multiplicity. 
Let $p(x,\xi)$ be the principal symbol of $P_0$, which is strictly positive and  homogeneous in $\xi$ of degree $m$ as a function  on $T^\ast M\setminus\mklm{0}$, that is,  the cotangent bundle of $M$ without the zero section.  Here and in what  follows  $(x,\xi)$ denotes an element in $T^*Y \simeq Y \times \R^d$ with respect to the canonical trivialization of the cotangent  bundle over a chart domain $Y\subset M$. Consider further the $m$-th root $Q:=\sqrt[m]{P}$ of $P$ given by the spectral theorem. It is well  known that $Q$ is a classical pseudodifferential operator of order $1$ with principal symbol $q(x,\xi):=\sqrt[m]{p(x,\xi)}$ and the first Sobolev space as domain. Again, $Q$ has discrete spectrum, and its eigenvalues  are given by $\mu_j:=\sqrt[m]{\lambda_j}$.  The spectral function $e(x,y,\lambda)$ of $P$ can then be described by  studying the spectral function of $Q$, which in terms of the basis $\mklm{\phi_j}$ is given by
\bq
\label{eq:specfunct} 
e(x,y,\mu):=\sum_{\mu_j\leq \mu} \phi_j(x) \overline{\phi_j(y)},
\eq
and belongs to $\Cinft(M \times M)$ as a function of $x$ and $y$ for any $\mu \in \R$. Let $s_\mu$ be the spectral projection onto the sum of eigenspaces of $Q$ with eigenvalues in the interval  $(\mu, \mu+1]$, and denote its Schwartz kernel by 
$$
s_\mu(x,y):=e(x,y,\mu+1) - e(x,y,\mu).
$$
 To obtain an asymptotic description of the spectral function of $Q$, one first derives a description of $s_\mu(x,y)$ by approximating $s_\mu$ by Fourier integral operators. To do so, let $\rho \in \S(\R,\R_+)$ be such that $\hat \rho(0)=1$ and $\supp \hat \rho\in (-\delta/2,\delta/2)$ for an arbitrarily small  $\delta>0$, and define the {approximate spectral projection operator} 
\bqn 
\widetilde s_\mu u := \sum_{j=0}^\infty \rho(\mu-\mu_j) E_{j} u, \qquad u \in \L^2(M),
\eqn
where $E_j$ denotes the orthogonal projection onto the subspace spanned by $\phi_j$. Clearly, 
\bq
\label{eq:29.5.2017}
K_{\widetilde s_\mu}(x,y):=\sum_{j=0}^\infty \rho(\mu-\mu_j) \phi_j(x) \overline{\phi_j(y)}\in \Cinft(M\times M)
\eq
 constitutes the kernel of $\widetilde s_\mu$. Now, notice that for $\mu,\tau\in \R$ one has
\bqn 
\rho(\mu -\tau) = \frac 1 {2\pi} \intop_\R \hat \rho(t) e^{-it\tau} e^{it\mu} \d t, 
\eqn
where $\hat \rho(t)$ denotes the Fourier transform of $\rho$, 
so that for $u \in \L^2(M)$ we obtain
\begin{align}
\label{eq:19.1.2017}
\widetilde s_\mu u=\frac 1 {2\pi} \sum_{j=0}^\infty \intop_\R \hat \rho(t)  e^{it\mu}  e^{-it\mu_j}\d t \, E_j u= \frac 1 {2\pi}  \intop_\R \hat \rho(t)  e^{it\mu}  U(t) u \d t, 
\end{align}
where $U(t)$ stands for  the one-parameter group 
  \bqn 
 U(t):=e^{-itQ}=\int e^{-it\mu} dE_\mu^Q, \qquad t \in \R,
 \eqn
of unitary operators in $\L^2(M)$  given by the Fourier transform of the spectral measure, 
 $\{E_\mu^Q\}$ being a spectral resolution of $Q$. The central result of H\"ormander \cite{hoermander68}  then says that  $U(t):\L^2(M)\rightarrow \L^2(M)$ can be approximated by Fourier integral operators.
 
 More precisely, let $\mklm{(\kappa_\iota, Y_\iota)}_{\iota \in I}$, $\kappa_\iota:Y_\iota \stackrel{\simeq}\to \widetilde Y_\iota \subset \R^d$, be an atlas for $M$,  $\mklm{f_\iota}$ a corresponding partition of unity and  
$
\hat v (\eta) :=\F(v)(\eta):= \int_{\R^d} e^{-i\langle \tilde y, \eta \rangle} v(\tilde y) \, d\tilde y$ the Fourier transform of $v \in \CT(\widetilde Y_\iota)$. Write $\dbar \eta:= \d\eta/(2\pi)^d$, and introduce  on $\widetilde Y_\iota$  the operator 
\bqn 
[\widetilde U_\iota(t)v] (\tilde x):= \int _{\R^d} e ^{i\psi_\iota(t,\tilde  x, \eta)} a_\iota (t, \tilde x, \eta) \hat{v}(\eta) \dbar \eta,
\eqn 
 where $a_\iota \in S^0_{\mathrm{phg}}$ is a classical polyhomogeneous symbol satisfying $a_\iota(0,\tilde x, \eta)=1$ and $\psi_\iota$  the defining phase function given as the solution of the Hamilton-Jacobi equation
\bqn 
\frac {\gd \psi_\iota} { \gd t } + q \Big (x, \frac{\gd \psi_\iota}{\gd \tilde x}\Big )=0 , \qquad  \psi_\iota( 0, \tilde x, \eta) = \eklm{\tilde x, \eta},
\eqn  
see \cite[ Page 254]{hoermanderIV}.   Let us remark that  $\psi_\iota$ is homogeneous in $\eta$ of degree $1$, so that Taylor expansion for small $t$ gives
\bqn
\psi_\iota(t,\tilde x, \eta) =\psi_\iota(0,\tilde x, \eta) +t \frac{\gd \psi_\iota}{\gd t} (0, \tilde x , \eta) + O(t^2|\eta|)= \eklm{\tilde x, \eta} -t q_\iota(\tilde x, \eta) +O(t^2|\eta|), 
\eqn 
where we wrote $q_\iota(\tilde x, \eta):=q(\kappa_\iota^{-1}(\tilde x),\eta)$. In other words, there exists a smooth function $\zeta_\iota$ which is homogeneous in $\eta$ of degree $1$ and satisfies
\begin{align*}
\begin{split}
\psi_\iota(t, \tilde x, \eta) &= \eklm {\tilde x, \eta} -t \zeta_\iota(t, \tilde x, \eta), \qquad \qquad \zeta_\iota(0, \tilde x, \eta) = q_\iota(\tilde x, \eta).  % \\ -2 \gd_t \zeta_\iota(0, \tilde x, \eta) &= \eklm{ \gd_\eta q_\iota(\tilde x, \eta), \gd_{\tilde x} q_\iota (\tilde x, \eta)}.
\end{split}
\end{align*}
Put now $\bar U_\iota(t) u := [ \widetilde U_\iota(t) (u \circ \kappa_\iota^{-1})] \circ \kappa_\iota$ for any  $ u \in \CT( Y_\iota)$. Consider further  test functions $\bar f_\iota \in \CT( Y_\iota)$ satisfying $\bar f_\iota \equiv 1$ on $\supp f_\iota$, and define
\bqn 
\bar U(t) := \sum _\iota F_\iota \, \bar U_\iota(t) \, \bar F_\iota, 
\eqn
where  $F_\iota$, $\bar F_\iota$ denote the multiplication operators with  $f_\iota$ and $\bar f_\iota$, respectively. Then  H\"ormander showed that for small $|t|$ 
\bq
\label{eq:R(t)}
\mathcal{R}(t) := U(t) -\bar U(t) \, \text{is an operator with smooth kernel,}
\eq 
compare \cite[ Page 134]{grigis-sjoestrand} and \cite[Theorem 20.1]{shubin}; in particular, the kernel $\mathcal{R}_t(x,y)$ of $\mathcal{R}(t)$ is smooth as a function of $t$. 

Approximating in \eqref{eq:19.1.2017} the operator $U(t)$  by $\bar U(t)$, one obtains a  description for the kernel of $\widetilde s_\mu$ as the double oscillatory integral
\begin{align}
\label{eq:20.01.2017bis}
\begin{split}
K_{\widetilde s_\mu}(x,y)
=&  \frac {\mu^d  }{(2\pi)^{d+1}}  \sum_\iota  \int_{\R}\int _{\R} e^{i\mu[t-Rt]}   I_{\iota}(\mu, R, t, x,y)  \d R\d t 
\end{split}
\end{align}
up to terms of order  $O(\mu^{-\infty})$ which are uniform in $x$ and $y$, where   
\begin{align*}
\begin{split}
 I_{\iota}(\mu, R, t, x,y):= &     \int_{\Sigma^{R,t}_{\iota,x}} e^{i{ \mu}  \Phi_{\iota,x,y}(\omega)} \hat \rho(t)   f_\iota( x) \,     a_\iota(t, \kappa_\iota(  x) , \mu \omega)  \bar f _\iota (y)   b(q( x, \omega)) {\d\Sigma^{R,t}_{\iota,x}(\omega)  }, \\
 \Phi_{\iota,x,y}(\omega):=&\eklm{\kappa_\iota(  x) - \kappa_\iota(y),\omega},
 \end{split}
\end{align*}
and 
\bq
\label{eq:20.04.2015}
\Sigma^{R,t}_{\iota,x}:=\mklm{\omega \in \R^d  \mid \zeta_\iota (t, \kappa_\iota (x),\omega) = R},
\eq
while $0 \leq b  \in \CT(1/2,3/2)$ is a test function such that $b\equiv 1$ in  a neighborhood of $1$, compare \cite[Eq. (2.8)]{ramacher16}. Here ${d\Sigma^{R,t}_{\iota,x}(\omega)}$ denotes the quotient of Lebesgue measure  in $\R^d$ by Lebesgue measure in $\R$ with respect to $\zeta_\iota(t, \tilde x, \omega)$.  Furthermore, for sufficiently small $\delta>0$ one can assume that the $R$-integration is over a compact set, and  $R$ and $t$ are close to $1$ and $0$, respectively.  From \eqref{eq:20.01.2017bis}, an asymptotic description can be inferred as $\mu \to +\infty$ by means of the stationary phase principle. In fact, one has the following

\begin{proposition}
\label{prop:31.05.2017}
Suppose  that the cospheres $S^\ast_xM:=\mklm{(x,\xi) \in T^\ast M\mid  p(x,\xi)=1}$ are strictly convex.\footnote{This condition is required only in the case $x\not=y$. For example, it holds if $P_0=\Delta$ equals the Beltrami--Laplace operator, since then $p(x,\xi)=\|\xi\|^2_x$.}
Then, for any  fixed $x, y \in M$,  and $\tilde N=0,1,2,3,\dots$ one has the expansion
\begin{align*}
\begin{split}
K_{\widetilde s_\mu}(x,y)&=   \mu^{d-1-{\frac{\delta_{x,y}}{2}}} \left [ \sum_{r=0}^{\tilde N-1} \Lcal_r(x,y,\mu) + \cR_{\tilde N}(x,y,\mu) \right ]
\end{split}
\end{align*}
up to terms of order $O(\mu^{-\infty})$ as $\mu \to +\infty$, where  
\bqn 
\delta_{ x,y}:=\begin{cases} 0, & y=x, \\ d-1, & y \not=x.  \end{cases}
\eqn
The coefficients in the expansion and  the remainder $\cR_{\tilde N}(x,y,\mu)=O_{x,y}(\mu^{-\tilde N})$ term can be computed explicitly;  if $y=x$, they  are uniformly bounded in $x$ and $y$,  while  if $y \not=x$, they satisfy the bounds
 \begin{align}
 \label{eq:31.5.2017}
\begin{split}
\Lcal_r(x,y,\mu) &\ll \, {\dist (x,y)}^{-(d-1)/2-r} \, \mu^{-r},  \qquad \cR_{\tilde N}(x,y,\mu) \ll \, \dist (x, y)^{-(d-1)/2-\tilde N} \, \mu^{-\tilde N},
\end{split}
\end{align}
where $\dist(x,y)$ denotes the geodesic distance between two points  belonging to the same connected component. Otherwise,  $\dist(x,y):=\infty$. 
\end{proposition}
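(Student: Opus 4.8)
The plan is to read off the asymptotics directly from the oscillatory integral \eqref{eq:20.01.2017bis} by the stationary phase method, applied in two decoupled stages. The $(R,t)$-phase $\mu[t-Rt]=\mu t(1-R)$ has the single critical point $(R,t)=(1,0)$, with nondegenerate Hessian of signature $0$, and does not involve $\omega$; the $\omega$-phase $\mu\Phi_{\iota,x,y}(\omega)=\mu\langle\kappa_\iota(x)-\kappa_\iota(y),\omega\rangle$ does not involve $(R,t)$; the only coupling is through the smooth amplitude and the hypersurface $\Sigma^{R,t}_{\iota,x}$, which varies smoothly with $(R,t)$ near $(1,0)$. I would therefore first apply stationary phase in $(R,t)$, producing a factor $2\pi/\mu$ times a full asymptotic series in $\mu^{-1}$, the coefficients of which are obtained by differentiating the remaining $\omega$-integral and evaluating at $(R,t)=(1,0)$. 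There $a_\iota(0,\cdot,\cdot)=1$, $\hat\rho(0)=1$ and $\zeta_\iota(0,\kappa_\iota(x),\omega)=q_\iota(\kappa_\iota(x),\omega)$, so $\Sigma^{1,0}_{\iota,x}$ is the coordinate picture of the cosphere $S^\ast_xM$, on which $b(q(x,\omega))\equiv 1$; expanding the order-$0$ symbol as $a_\iota(t,\kappa_\iota(x),\mu\omega)=\sum_{k\ge 0}\mu^{-k}a_\iota^{(k)}(t,\kappa_\iota(x),\omega)$ for $|\omega|\asymp 1$, every term of the series is (a product of $f_\iota(x)$, $\bar f_\iota(y)$ with) an integral $\int_{S^\ast_xM}e^{i\mu\langle\kappa_\iota(x)-\kappa_\iota(y),\omega\rangle}A(\omega)\,d\Sigma^{1,0}_{\iota,x}(\omega)$. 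For $x=y$ these are $\omega$-independent constants, the leading one being $\sum_\iota f_\iota(x)\bar f_\iota(x)\,\vol(S^\ast_xM)$, and together with the prefactor $\mu^d$ and the $\mu^{-1}$ from the $(R,t)$-stage this yields $K_{\widetilde s_\mu}(x,x)=\mu^{d-1}(c(x)+O(\mu^{-1}))$ with all coefficients uniformly bounded in $x$; this is the case $\delta_{x,x}=0$.

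For $x\neq y$ the remaining step is stationary phase on the compact $(d-1)$-dimensional manifold $S^\ast_xM$ with large parameter $\mu$ and phase $\omega\mapsto\langle v,\omega\rangle$, where $v:=\kappa_\iota(x)-\kappa_\iota(y)$. The critical points are the points of $S^\ast_xM$ whose conormal is parallel to $v$; since by hypothesis $S^\ast_xM$ bounds a strictly convex body, there are exactly two, say $\omega_\pm$, and the Hessian of the restricted phase at $\omega_\pm$ is $\mp|v|$ times the second fundamental form of $S^\ast_xM$ at $\omega_\pm$, which is nondegenerate precisely because of strict convexity; hence the Hessian determinant is $\asymp|v|^{d-1}$, uniformly for $x$ in a compactum. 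To keep the $|v|$-dependence explicit at every order I would rescale the parameter: with $v=|v|\hat v$, $|\hat v|=1$, the phase is $(\mu|v|)\langle\hat v,\omega\rangle$ with unit-size derivatives, so the stationary phase expansion reads $\int_{S^\ast_xM}e^{i\mu\langle v,\omega\rangle}A\,d\Sigma\sim(\mu|v|)^{-(d-1)/2}\sum_{r\ge 0}(\mu|v|)^{-r}c_r(x,\hat v)$, with remainder of size $(\mu|v|)^{-(d-1)/2-\tilde N}$ after $\tilde N$ terms, where the $c_r$ and the remainder constants are uniformly bounded (for $x$ in a compactum, $\hat v\in S^{d-1}$, and smoothly in $y$). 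Combining with $\mu^d$, the $\mu^{-1}$ from the $(R,t)$-stage, and $|v|\asymp\dist(x,y)$ within a fixed chart, one obtains the asserted expansion with $\delta_{x,y}=d-1$ and $\Lcal_r(x,y,\mu)\ll\dist(x,y)^{-(d-1)/2-r}\mu^{-r}$, $\cR_{\tilde N}(x,y,\mu)\ll\dist(x,y)^{-(d-1)/2-\tilde N}\mu^{-\tilde N}$; the critical values $\langle v,\omega_\pm\rangle$, which are $\asymp\dist(x,y)$ in size, enter only through oscillatory factors $e^{\pm i\mu\langle v,\omega_\pm\rangle}$ and do not affect the bounds.

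Two loose ends: the smooth-kernel remainder $\cR(t)$ of \eqref{eq:R(t)} and the non-stationary parts of the $(R,t,\omega)$-integral contribute $O(\mu^{-\infty})$ uniformly in $x,y$ by repeated integration by parts in $t$ (using that $\hat\rho$ has compact support); and if $\dist(x,y)$ exceeds a fixed multiple of $\mathrm{diam}(\supp\hat\rho)$, then $\nabla_\eta\psi_\iota=\kappa_\iota(x)-\kappa_\iota(y)+O(t)$ is non-stationary in the underlying Fourier integral operators, so $K_{\widetilde s_\mu}(x,y)=O(\mu^{-\infty})$ there and the bounds are trivial. The stationary phase step on $S^\ast_xM$ is legitimate only in the regime $\mu\dist(x,y)\to\infty$, which is exactly the asymptotic regime $\mu\to+\infty$ for fixed $x\neq y$; for $\mu\dist(x,y)$ bounded one falls back on the trivial bound, which is how \eqref{eq:31.5.2017} is later applied.

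I expect the main obstacle to lie not in the geometry — the equivalence ``cospheres strictly convex'' $\Leftrightarrow$ ``restricted-phase Hessian nondegenerate'', together with its $|v|$-scaling, is immediate — but in the bookkeeping required to upgrade the formal stationary phase expansions to \emph{uniform} estimates with the precise $\dist(x,y)$-dependence: one must control all derivatives of the amplitude, including the symbol $a_\iota(t,\kappa_\iota(x),\mu\omega)$ with its $\mu$-dependent argument and its expansion in $\mu^{-1}$; check that the $(R,t)$- and $\omega$-reductions interlace without loss of uniformity as $(R,t)\to(1,0)$ while $\omega$ ranges over the varying cospheres; and verify that each inverse-Hessian contraction in the higher-order terms contributes exactly one factor of $\dist(x,y)^{-1}$. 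This is in essence the computation of \cite[Eq. (2.8) ff.]{ramacher16}, which I would follow.
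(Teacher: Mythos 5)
The overall geometry is right — the equivalence between strict convexity of $S^\ast_xM$ and nondegeneracy of the restricted phase, the two antipodal critical points $\omega_\pm$, the Hessian determinant $\asymp |v|^{d-1}$, the rescaling $v=|v|\hat v$ to make the $\dist(x,y)$-dependence visible at every order, and the $O(\mu^{-\infty})$ disposal of $\mathcal{R}(t)$ and the non-stationary pieces. But the order in which you apply the two stationary-phase reductions is backwards in the off-diagonal case, and this is not a bookkeeping issue: it invalidates the $(R,t)$-step as you set it up. For $x\neq y$ the function $I_\iota(\mu,R,t,x,y)$ is \emph{not} a symbol-type amplitude in $(R,t)$: since $\zeta_\iota$ is homogeneous of degree one in $\omega$, the constraint $\zeta_\iota=R$ can be scaled to write $I_\iota=\int_{\Sigma^{1,t}_{\iota,x}}e^{i\mu R\,\Phi_{\iota,x,y}(\omega')}(\cdots)\,d\Sigma^{1,t}_{\iota,x}(\omega')$, so each $\partial_R$ (and similarly each $\partial_t$, through $\Sigma^{1,t}$) brings down a factor of size $\mu\,\Phi_{\iota,x,y}(\omega')=O(\mu\,\dist(x,y))$. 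The $j$-th $(R,t)$-derivative of $I_\iota$ therefore grows like $(\mu\dist(x,y))^j$ times $I_\iota$, and the formal series you propose — $\mu^{-1}\sum_j\mu^{-j}D^{2j}_{(R,t)}I_\iota|_{(1,0)}$ — has $j$-th term of size $\mu^{-1}(\mu\dist(x,y)^2)^j(\mu\dist(x,y))^{-(d-1)/2}$, which is \emph{not} a decreasing sequence as $\mu\to+\infty$ for fixed $x\neq y$. So stationary phase in $(R,t)$ first does not produce a legitimate asymptotic expansion.

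The paper does the $\omega$-reduction \emph{first} (its Lemma 3.5 from \cite{ramacher16}, which is the uniform stationary-phase-on-a-submanifold lemma you rediscover directly), keeping $(R,t)$ as spectator parameters, so that the oscillatory critical-value factor $e^{i\mu\Phi^0_{\iota,x,y}(R,t)}$, with $\Phi^0_{\iota,x,y}(R,t)=R\,c_{x,y}(t)=O(\dist(x,y))$, is extracted explicitly. This factor is then absorbed into the $(R,t)$-phase, which becomes $t(1-R)+\Phi^0_{\iota,x,y}(R,t)$; its critical point shifts by $O(\dist(x,y))$ away from $(1,0)$ and its Hessian determinant becomes $-\bigl(1\pm O(\dist(x,y))\bigr)^2$, still nondegenerate after shrinking the charts so that $\|\kappa_\iota(x)-\kappa_\iota(y)\|\ll 1$. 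Only after this absorption is the $(R,t)$-amplitude (the $\mathcal{Q}_{\iota,r}$ and $\tilde{\mathcal{R}}_{\iota,\tilde N}$ coefficients) genuinely $\mu$-tame, with uniform $(R,t)$-derivative bounds of size $\dist(x,y)^{-(d-1)/2-r}$. You identify the interlacing of the two reductions as an obstacle, but it is not one you can push through in your chosen order; you must reverse it. Once you do, the rest of your argument (the rescaling, the $|v|\asymp\dist(x,y)$ comparison within a chart, the trivial-bound fallback when $\mu\dist(x,y)$ is bounded) matches the paper's proof.
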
  

\begin{proof}
If $x=y$, one has $I_{\iota}(\mu, R, t, x,y)=O(1)$ uniformly in all parameters since  $a_\iota \in \mathrm{S}^0_{phg}$ is a classical symbol of order $0$, so that 
\bqn 
\big | \gd ^\alpha_\omega a_\iota ( t, \kappa _\iota (x), \mu \omega) \big | =|\mu|^{|\alpha|} \big | (\gd^\alpha_\omega a _\iota)( t, \kappa_\iota(x), \mu \omega) \big | \leq C |\omega|^{-|\alpha|}.
\eqn
Consequently, the  dependence of the amplitude on $\mu$ does not interfer with the asymptotics, compare \cite[Proposition 1.2.4]{duistermaatFIO}. Applying the stationary phase principle \cite[Theorem 7.7.5]{hoermanderI} to the $(R,t)$-integral in \eqref{eq:20.01.2017bis} with $t(1-R)$ as phase function then yields the assertion for $x=y$, the unique critical point being $(R_0,t_0)=(1,0)$ in this case. Let us now assume that $x\not=y$.  By assumption, the cospheres $S_x^\ast M$ are strictly convex, so that for small $|t|$, the hypersurfaces $\Sigma^{R,t}_{\iota,x}$ will  be strictly convex, too.  Applying \cite[Lemma 3.5]{ramacher16}  to the integrals $I_{\iota}(\mu, R, t, x,y)$ with  $\eklm{\frac{\kappa_\iota(  x) - \kappa_\iota(y)}{\norm{\kappa_\iota(  x) - \kappa_\iota(y)}},\omega}$ as phase function and $\nu:=\mu \norm{\kappa_\iota(  x) - \kappa_\iota( y)} $ as asymptotic parameter yields for any $\tilde N\in \N$ the expansion  
\begin{align*}
\begin{split}
 I_{\iota}(\mu, R, t, x,y)&= \sum_{{\omega_0 \in \Crit \Phi_{\iota, x,y}}} {e^{i\mu\Phi_{\iota, x,y}(\omega_0)}  }\left [ \sum_{r=0}^{\tilde N-1} \mathcal{Q}_{\iota, r}(R,t,x,y,\omega_0) \mu  ^{-(d-1)/2-r}  + \mathcal{R}_{\iota, \tilde N}(R,t,x,y,\omega_0,\mu)\right ],
 \end{split}
\end{align*}
where the coefficients and the remainder are smooth in $R$ and $t$, and satisfy the bounds
\begin{align*}
\begin{split}
\gd^\alpha_{R,t}\mathcal{Q}_{\iota,r}(R,t,x,y,\omega_0) &\ll  \norm{\kappa_\iota(x)-\kappa_\iota(y)}^{ -(d-1)/2-r}, \\
\gd^\alpha_{R,t} \mathcal{R}_{\iota, \tilde N}(R,t,x,y, \omega_0,\mu) & \ll (\norm{\kappa_\iota(x)-\kappa_\iota( y)}\mu)^{-(d-1)/2- \tilde N},
\end{split}
\end{align*}
uniformly in $R$ and $t$. Regarding the value of $\Phi_{\iota, x,y}$ on its critical set, one  computes for $\omega_0 \in  \Crit \, \Phi_{\iota, x,y}$
\bqn
\Phi_{\iota, x,y}(\omega_0)=\pm   {\norm {\kappa_\iota(x) - \kappa_\iota( y )}} R /{\norm {\grad_\eta \zeta_\iota(t,\kappa_\iota(x),\omega_0) }},
\eqn
since $\kappa_\iota(x) - \kappa_\iota( y )$ must be colinear to $\grad_\eta \zeta(t,\kappa_\iota(x),\omega_0)$. Notice that due to the fact that $\zeta(t,\kappa_\iota(x),\eta)$ is homogeneous of degree $1$ in $\eta$, the gradient $\grad_\eta \zeta(t,\kappa_\iota(x),\omega_0)$ only depends on the direction of $\omega_0$, and is therefore independent of $R$. From this and  \eqref{eq:20.01.2017bis} we deduce for $K_{\widetilde s_\mu }(x,y)$ as $\mu \to +\infty$ the expansion
\begin{align*}
\begin{split}
  \frac {\mu^d   }{(2\pi)^{d+1}}  \sum _{\iota, \omega_0}  \int_{\R}\int _{\R} e^{i\mu[t-Rt+\Phi_{\iota, x,y}(\omega_0)]}  \left [ \sum_{r=0}^{\tilde N}  \mathcal{Q}_{\iota,r}(R,t,x,y,\omega_0) \mu^{-(d-1)/2-r}  + \mathcal{R}_{\iota,\tilde N}(R,t,x,y,\omega_0,\mu)\right ]  d R\d t
\end{split}
\end{align*}
up to terms of order $O(\mu^{-\infty})$. Again, we apply the stationary phase principle to the $(R,t)$-integrals, where now the phase function reads $t(1-R) +\Phi_{\iota,x,y}(\omega_0)$. The determinant of the matrix of its second derivatives is given by
\bqn
- \left (1  \pm O({\norm {\kappa_\iota(x) - \kappa_\iota(y)}})    \right )^2.
\eqn
  By choosing the charts $Y_\iota$ sufficiently small so that $\|\kappa_\iota(x) - \kappa_\iota(y)\| \ll 1$, we can therefore achieve that  in a  sufficiently small neighborhood of $(R,t)=(1,0)$, which is where the amplitude of the $(R,t)$-integral is supported, the phase function $t(1-R) +\Phi_{\iota,x,y}(\omega_0)$ has, if at all, only non-degenerate, hence isolated, critical points.  If we now apply the stationary phase theorem, the assertion follows in the case $x\not=y$ as well. 
\end{proof}

\begin{rem}
\label{rem:11.06.2017}
By Cauchy-Schwarz and the positivity of the test function $\rho$ we infer from the previous proposition for $\tilde N=0$  that
\begin{align*}
\begin{split}
|K_{ \tilde s_\mu}&(x,y) |\leq \sqrt{\sum_{j \geq 0} \rho(\mu-\mu_j) |\phi_j(x)|^2} \sqrt{\sum_{j \geq 0} \rho(\mu-\mu_j) |\phi_j(y)|^2}=\sqrt{K_{ \tilde s_\mu}(x,x)}  \sqrt{K_{ \tilde s_\mu}(y,y)}=O\big (\mu^{d-1}  \big )
\end{split}
\end{align*}
uniformly in $x,y\in M$. Also, note that the asymptotics in the proposition  off the diagonal are only meaningful if $\dist(x,y)^{-1}$ is small with respect to  $\mu$. 
\end{rem}

As the previous proposition shows, the kernel of $\tilde s_\mu$ exhibits a caustic behaviour\footnote{For the terminology, see Appendix A in \cite{ramacher16}.} in a neighbourhood of the diagonal since for $x=y$  the integrals $I_\iota(\mu,R,t,x,y)$ no longer oscillate and are of order $O(\mu^0)$.  From Proposition \ref{prop:31.05.2017}, similar asymptotics for  $s_\mu$ can be deduced. By looking at asymptotics on the diagonal, one obtains Weyl's law for the spectral function of $Q$ and convex $\L^\infty$-bounds for eigenfunctions, since $\norm {s_\mu}_{\L^2 \to \L^\infty}^2\equiv \sup_{x\in M} s_\mu(x,x)$, yielding  for any eigenfunction the convex bound
\bqn
\norm{\phi_j}_{\infty} \ll \, \lambda_j^{\frac{d-1}{2m}},
\eqn
compare  \cite[Theorem 5.1]{hoermander68} and \cite[Eq. (3.2.6)]{sogge14}. Nevertheless,  in order to prove subconvex bounds, we shall also need asymptotics off the diagonal, so that the full caustic behaviour of $K_{\tilde s_\mu}(x,y)$ near the diagonal becomes relevant.

\subsection{The reduced spectral function and equivariant convex bounds for eigenfunctions}  Keeping the notation of Section \ref{sec:spec.funct}, assume now  that $M$ carries an effective and isometric action of a compact  Lie group $K$, and consider  the right regular representation $\pi$ of $K$ on $\L^2(M)$ with corresponding 
 Peter-Weyl decomposition
\bq
\label{eq:PW}
\L^2(M) = \bigoplus_{\sigma \in \widehat K} \L^2_\sigma(M), \qquad \L^2_\sigma(M):= \Pi_\sigma \L^2(M),
\eq
where $\widehat K$ denotes the unitary dual of $K$ and 
\bqn 
\Pi_\sigma:= d_\sigma \int_K \overline{\sigma(k)} \pi(k) \d k
\eqn
 the orthogonal projector onto the $\sigma$-isotypic component, $dk$ being Haar measure and $d_\sigma$ the dimension of an irreducible representation of $K$ in the class $\sigma \in \widehat K$. Further, suppose that $P$ commutes with $\pi$, and that the orthonormal basis $\mklm{\phi_j}_{j \geq 0}$ is compatible with the decomposition \eqref{eq:PW} in the sense that each $\phi_j$ lies in some $\L^2_\sigma(M)$. Then every eigenspace of $P$ is invariant under $\pi$, and decomposes into irreducible $K$-modules spanned by  eigenfunctions. In order to study eigenfunctions of $P$ of a certain $K$-type, one is interested in the spectral function of the operator $Q_\sigma:=\Pi_\sigma \circ Q \circ \Pi_\sigma= \Pi_\sigma \circ Q= Q \circ \Pi_\sigma$, also called the \emph{reduced spectral function}, given by 
 \bq
 \label{eq:02.02.2017} 
 e_\sigma(x,y,\mu)=\sum_{\mu_j \leq \mu, \, \phi_j \in \L^2_\sigma(M)} \phi_j(x) \overline{ \phi_j(y)}. 
 \eq
 For this, one considers the composition $s_\mu \circ \Pi_\sigma$, or rather $
\tilde s_\mu \circ \Pi_\sigma$, whose kernel has the spectral expansion
\bq
\label{eq:24.11.2016b}
K_{\widetilde s_\mu \circ \Pi_\sigma}(x,y)=\sum_{j \geq 0,  \phi_j \in \L^2_\sigma(M)} \rho(\mu-\mu_j) \phi_j(x) \overline{\phi_j(y)}.
\eq
Similarly to \eqref{eq:20.01.2017bis}, it was shown in \cite[Eq. (2.8)]{ramacher16} that by approximating $U(t)$ in \eqref{eq:19.1.2017} by the Fourier integral operator $\bar U(t)$ one obtains a  description for the kernel of $\widetilde s_\mu \circ \Pi_\sigma$ as the double oscillatory integral
\begin{align}
\label{eq:20.01.2017}
\begin{split}
K_{\widetilde s_\mu \circ \Pi_\sigma}(x,y)
=&  \frac {\mu^d d_\sigma  }{(2\pi)^{d+1}}  \sum_\iota  \int_{\R}\int _{\R} e^{i\mu[t-Rt]}   I^\sigma_{\iota}(\mu, R, t, x,y)  \d R\d t 
\end{split}
\end{align}
up to terms of order  $O(\mu^{-\infty})$ which are uniform in $x$ and $y$, where
\begin{align*}
\begin{split}
 I^\sigma_{\iota}(\mu, R, t, x,y):= &    \int_K \int_{\Sigma^{R,t}_{\iota,x}} e^{i{ \mu}  \Phi_{\iota,x,y}(\omega,k)} \hat \rho(t)  \overline{\sigma(k)}  f_\iota( x) \\ &\cdot     a_\iota(t, \kappa_\iota(  x) , \mu \omega)  \bar f _\iota (y \cdot k^{-1})   b(q( x, \omega)) J_\iota(k,y) {\d\Sigma^{R,t}_{\iota,x}(\omega)  } \d k,  \\
 \Phi_{\iota,x,y}(\omega,k):=&\eklm{\kappa_\iota(  x) - \kappa_\iota(y \cdot  k^{-1}),\omega},
 \end{split}
\end{align*}
 $J_\iota(k,y)$ being a Jacobian.   Write $\O_x:= x \cdot K$ for the $K$-orbit through $x \in M$. We then have the following 
  
\begin{proposition}
\label{prop:30.01.2017}
Suppose that $K$ acts on $M$ with orbits of the same dimension $\kappa\leq d-1$ and  that the cospheres $S^\ast_xM:=\mklm{(x,\xi) \in T^\ast M\mid  p(x,\xi)=1}$ are strictly convex. Then, for any  fixed $x, y \in M$, $\sigma \in \widehat K$,  and $\tilde N=0,1,2,3,\dots$ one has the expansion
\begin{align*}
\begin{split}
K_{\widetilde s_\mu \circ \Pi_\sigma}(x,y)&=    \mu^{d-{\frac{\epsilon_{x,y}}{2}}-1} d_\sigma  \left [ \sum_{r=0}^{\tilde N-1} \Lcal_r^\sigma (x,y,\mu) + \cR^\sigma_{\tilde N}(x,y,\mu) \right ]
\end{split}
\end{align*}
up to terms of order $O(\mu^{-\infty})$ as $\mu \to +\infty$, where  
\bqn 
\varepsilon_{ x,y}:=\begin{cases} 2 \kappa, & y \in \mathcal{O}_x, \\ d-1+\kappa, & y \notin \mathcal{O}_x.  \end{cases}
\eqn
The coefficients in the expansion and  the remainder term can be computed explicitly;  if $y \in \O_x$, they  satisfy the bounds
 \begin{align*}
\begin{split}
\Lcal^\sigma_r(x,y,\mu ) &\ll \, \sup_{u \leq 2r}\norm{D^u \sigma}_\infty\, \mu^{-r}, \qquad
 \cR^\sigma_{\tilde N}(x,y,\mu) \ll \, \sup_{u \leq 2\tilde N+ \lfloor \frac \kappa 2+1\rfloor}\norm{D^u \sigma}_\infty \, \mu^{-\tilde N},
\end{split}
\end{align*}
uniformly in $x$ and $y$, where $D^u$ denote differential operators on $K$ of order $u$, and  if $y \notin \O_x$, the bounds
 \begin{align*}
\begin{split}
\Lcal^\sigma_r(x,y,\mu ) &\ll \, \sup_{u \leq 2r}\norm{D^u \sigma}_\infty \cdot {\dist (x, \O_y)}^{-\frac{d-\kappa-1}2-r}\, \mu^{-r}, \\ 
 \cR^\sigma_{\tilde N}(x,y,\mu) &\ll \, \sup_{u \leq 2\tilde N + \lfloor\frac{\kappa}2+1\rfloor}\norm{D^u \sigma}_\infty \cdot \dist (x, \O_y)^{-\frac{d-\kappa-1}2-\tilde N} \, \mu^{-\tilde N},
\end{split}
\end{align*}
where $\dist (x, \O_y):=\min\mklm{\dist(x,z)\mid z \in \O_y}$. If $K=T$ is a torus,  let $\widehat T' \subset \widehat T$ be the subset of representations occuring in the decomposition \eqref{eq:PW}, and identify  $\widehat T$ with the set of integral linear forms on $\t$. Then the remainder estimates can be improved to
\bqn 
 \cR^\sigma_{\tilde N}(x,y,\mu) \ll \, \sup_{u \leq 2\tilde N}\norm{D^u \sigma}_\infty \, \mu^{-\tilde N}, \qquad  \cR^\sigma_{\tilde N}(x,y,\mu) \ll \, \sup_{u \leq 2\tilde N}\norm{D^u \sigma}_\infty \cdot \dist (x, \O_y)^{-\frac{d-\kappa-1}2-\tilde N} \, \mu^{-\tilde N},
\eqn
respectively, provided that $\sigma \in \mathcal{V}_\mu:=\{\sigma' \in \widehat T' \mid |\sigma'| \leq C \mu/\log \mu\}$ for some constant $C>0$.

\end{proposition}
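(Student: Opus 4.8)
The plan is to proceed in close analogy with the proof of Proposition~\ref{prop:31.05.2017}, now taking the oscillatory--integral representation \eqref{eq:20.01.2017} of $K_{\widetilde s_\mu \circ \Pi_\sigma}(x,y)$ as the starting point instead of \eqref{eq:20.01.2017bis}; indeed, the asserted expansion is essentially a restatement of \cite[Corollary 2.2 and Theorem 3.3]{ramacher16} (and of \cite{ramacher18} in the torus case), while the off-orbit bounds will be obtained exactly as in the proof of Proposition~\ref{prop:31.05.2017}. The first step is to analyse the critical set of the inner phase $\Phi_{\iota,x,y}(\omega,k)=\eklm{\kappa_\iota(x)-\kappa_\iota(y\cdot k^{-1}),\omega}$ on the product manifold $\Sigma^{R,t}_{\iota,x}\times K$, which has dimension $(d-1)+\dim K$. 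Writing $v(k):=\kappa_\iota(x)-\kappa_\iota(y\cdot k^{-1})$, a point $(\omega,k)$ is critical iff $v(k)$ is colinear with $\grad_\eta\zeta_\iota(t,\kappa_\iota(x),\omega)$ and $\omega$ annihilates the image of the differential of the orbit map $k\mapsto \kappa_\iota(y\cdot k^{-1})$. Since $v(k)$ is constant along the fibres of that map, the effective phase lives on $\Sigma^{R,t}_{\iota,x}\times\O_y$, of dimension $(d-1)+\kappa$, and --- crucially --- the entire dependence of the amplitude on $\sigma$ sits in the $K$-integral, whose effective dimension is $\kappa$; this is what forces the $\sigma$-derivative counts to be the same in both regimes below.

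In the on-orbit case $y\in\O_x$ one has $v(k_0)=0$ for every $k_0$ with $y\cdot k_0^{-1}=x$, so the $\omega$-derivative of $\Phi_{\iota,x,y}$ vanishes identically there and the critical set is the clean submanifold $\mklm{(\omega,k)\mid v(k)=0,\ \omega\perp\grad_\eta\zeta_\iota}$ of codimension $2\kappa$. For small $|t|$, strict convexity of the cospheres $S^\ast_xM$ guarantees that the Hessian of $\Phi_{\iota,x,y}$ transverse to this submanifold is nondegenerate, so the stationary phase principle applied along the critical submanifold, as in \cite{ramacher16}, gives $I^\sigma_\iota(\mu,R,t,x,y)=O(\mu^{-\kappa})$, with an asymptotic expansion in $\mu^{-1}$ whose $r$-th coefficient is obtained by applying a differential operator of order $2r$ to the amplitude --- in particular to the factor $\overline{\sigma(k)}$ --- and whose remainder after $\tilde N$ terms is bounded by amplitude derivatives of order up to $2\tilde N+\lfloor\kappa/2+1\rfloor$; this accounts for the factors $\sup_{u\le 2r}\norm{D^u\sigma}_\infty$ and $\sup_{u\le 2\tilde N+\lfloor\kappa/2+1\rfloor}\norm{D^u\sigma}_\infty$, uniformly in $x,y$. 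Feeding this into \eqref{eq:20.01.2017} and applying stationary phase to the remaining $(R,t)$-integral with phase $t(1-R)$ and unique critical point $(1,0)$ produces an additional factor $\mu^{-1}$, hence altogether $\mu^{d-\kappa-1}d_\sigma(\dots)$, that is, the case $\varepsilon_{x,y}=2\kappa$.

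In the off-orbit case $y\notin\O_x$ the vector $v(k)$ never vanishes, and $\norm{v(k)}\gg\dist(x,\O_y)$ up to the chart distortion, so the $\omega$-integral genuinely oscillates; applying \cite[Lemma 3.5]{ramacher16} on the strictly convex hypersurface $\Sigma^{R,t}_{\iota,x}$ with $\eklm{v(k)/\norm{v(k)},\omega}$ as phase and $\nu:=\mu\norm{v(k)}$ as asymptotic parameter produces the factor $\nu^{-(d-1)/2}$ together with the two critical values $\Phi_{\iota,x,y}(\omega_\pm,k)=\pm\norm{v(k)}R/\norm{\grad_\eta\zeta_\iota(t,\kappa_\iota(x),\omega_\pm)}$, exactly as in the proof of Proposition~\ref{prop:31.05.2017}. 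One is then left with a $k$-integral whose phase is $\pm\mu\norm{v(k)}R/\norm{\grad_\eta\zeta_\iota}$; this phase is constant along the fibres of the orbit map, so the remaining stationary phase is effectively $\kappa$-dimensional, with Hessian of magnitude $\sim\mu/\dist(x,\O_y)$, contributing $\mu^{-\kappa/2}\dist(x,\O_y)^{\kappa/2}$, while its $\sigma$-dependent amplitude is differentiated up to order $2r$ for the $r$-th coefficient and $2\tilde N+\lfloor\kappa/2+1\rfloor$ for the remainder, as before. Tracking all powers of $\nu=\mu\norm{v(k)}$ and of $\mu$, together with the $(R,t)$-integration, yields the off-orbit expansion with leading power $\mu^{(d-1-\kappa)/2}$ and the stated weights $\dist(x,\O_y)^{-(d-\kappa-1)/2-r}$ and $\dist(x,\O_y)^{-(d-\kappa-1)/2-\tilde N}$ on the coefficients and the remainder.

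Finally, in the torus case $K=T$ one writes $\sigma(k)=e^{i\eklm{\lambda_\sigma,\log k}}$ and absorbs the factor $\overline{\sigma(k)}$ into the oscillatory exponential of the $k$-integral. Under the hypothesis $\sigma\in\mathcal{V}_\mu$, i.e.\ $|\sigma|\le C\mu/\log\mu$, this merely perturbs the relevant critical point by $O(1/\log\mu)$ and leaves the leading Hessian unchanged, so that the amplitude appearing in the stationary phase estimate is now the purely smooth symbol $a_\iota$ rather than $\overline{\sigma(k)}a_\iota$; consequently the remainder after $\tilde N$ terms requires only derivatives of $a_\iota$, which removes the $\lfloor\kappa/2+1\rfloor$-loss and gives the improved bounds involving $\sup_{u\le 2\tilde N}\norm{D^u\sigma}_\infty$ in both the on- and off-orbit estimates. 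The point at which the argument will need the most care is verifying that the transverse Hessian on the clean critical submanifold in the on-orbit case is nondegenerate \emph{uniformly} in $x$ and $y$ --- and that none of the implied constants depend on $\sigma$ --- and, in the off-orbit case, carrying out the bookkeeping of the two nested stationary-phase reductions precisely enough to obtain the sharp joint decay in $\mu$ and in $\dist(x,\O_y)^{-1}$.
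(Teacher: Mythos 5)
Your proposal is correct and follows essentially the same route as the paper: both start from the oscillatory representation \eqref{eq:20.01.2017}, identify the critical-set codimensions $2\kappa$ and $d-1+\kappa$ in the on- and off-orbit cases, and then apply stationary phase to the inner $(\omega,k)$-integral followed by the $(R,t)$-integral, with the torus improvement coming from absorbing $\overline{\sigma(k)}$ into the phase as in \cite{ramacher18}. The only substantive difference is presentational: you unpack the off-orbit $(\omega,k)$-analysis as a nested $\omega$-then-$k$ stationary phase using \cite[Lemma 3.5]{ramacher16}, whereas the paper invokes \cite[Theorem 3.3]{ramacher16} as a black box for the joint critical analysis before treating $(R,t)$.
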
  

\begin{rem}
\label{prop:01.02.2017}
\hspace{0cm}
\begin{enumerate}
\item  %For $\mu \to -\infty$, the function $K_{\widetilde \chi_\mu \circ \Pi_\gamma}(x,x)$ is rapidly decreasing in $\mu$. 
Proposition \ref{prop:30.01.2017} implies for $\tilde N=0$  by Cauchy-Schwarz that
\begin{align*}
\begin{split}
|K_{ \tilde s_\mu \circ \Pi_\sigma}&(x,y) |\leq \sqrt{\sum_{j \geq 0,  \phi_j \in \L^2_\sigma(M)} \rho(\mu-\mu_j) |\phi_j(x)|^2} \sqrt{\sum_{j \geq 0,  \phi_j \in \L^2_\sigma(M)} \rho(\mu-\mu_j) |\phi_j(y)|^2}\\ 
&=\sqrt{K_{ \tilde s_\mu \circ \Pi_\sigma}(x,x)}  \sqrt{K_{ \tilde s_\mu \circ \Pi_\sigma}(y,y)}=O \Big (d_\sigma  \mu^{d-\kappa-1} \sup_{u \leq  \lfloor \kappa/2+1\rfloor }\norm{D^u \sigma}_\infty \Big)
\end{split}
\end{align*}
uniformly in $x,y\in M$ and $\sigma \in \widehat K$, while taking $\tilde N=1$ would yield an estimate of order 
$$O\Big (d_\sigma \,  \mu^{d-\kappa-1} \big ( \norm{\sigma}_\infty  +  \sup_{u \leq  \lfloor \kappa/2+3\rfloor }\norm{D^u \sigma}_\infty \mu^{-1}\big ) \Big ).$$
 If $K=T$ is a torus, better remainder estimates hold. 
\item Note that the asymptotics of Proposition \ref{prop:30.01.2017} in  the case $y \not \in \O_x$ are only meaningful if $\dist(x, \O_y)^{-1}$ is small with respect to  $\mu$. 
\end{enumerate}
\end{rem}

\begin{proof}[Proof of Proposition \ref{prop:30.01.2017}]
{This proposition is essentially a consequence of \cite[Theorem 3.3]{ramacher16} and \cite[Theorem 3.2]{ramacher18}. In particular,  an asymptotic expansion of $K_{\widetilde s_\mu \circ \Pi_\sigma}(x,x)$ was given in \cite[Proposition 4.2]{ramacher16} and, with an improved remainder estimate in the toric case, in  \cite[Proposition 4.1]{ramacher16}. } To obtain an asymptotic expansion off  the diagonal from \eqref{eq:20.01.2017}, we shall first apply the stationary phase theorem  to the integrals $ I^\sigma_{\iota}(\mu, R, t, x,y)$, and then to the $(R,t)$-integral. 
 If $x \not\in Y_\iota$ or $\O_y\cap Y_\iota=\emptyset$, $I^\sigma_\iota(\mu,R,t,x,y)=0$.  Otherwise,  \cite[Theorem 3.3]{ramacher16} implies for sufficiently small $Y_\iota$,  fixed $R,t \in \R$, and  any $\tilde N\in \N$ the asymptotic expansion
\bq
\label{eq:13.06.2016b}
 I^\sigma _{\iota}(\mu, R, t, x,y)= \mu^{-\frac{\mathrm{codim}\, \Crit \, \Phi_{\iota, x,y}}{2}}  e^{i\mu \Phi_{\iota, x,y}^0(R,t)}  \left [ \sum_{r=0}^{\tilde N-1} \Lcal^\sigma_{\iota,r}(R,t,x,y) \mu^{-r}+ \mathcal{\tilde R}^\sigma_{\iota,\tilde N}( R,t,x,y,\mu)\right ],
\eq
where $\Crit \, \Phi_{\iota, x,y}$ denotes the critical set of $\Phi_{\iota,x,y}$, and 
\bqn 
\mathrm{codim}\,  \Crit \, \Phi_{\iota, x,y}=\begin{cases} 2 \kappa, & y \in \mathcal{O}_x, \\ d-1+\kappa, & y \notin \mathcal{O}_x. \end{cases}
\eqn
 The coefficients   and the remainder term are given by distributions depending smoothly on $R,t$ with support in $\Crit  \, \Phi_{\iota,x,y}$ and $\Sigma^{R,t}_{\iota,x} \times K$, respectively.  Furthermore, they and their derivatives with respect to $R,t$ satisfy  for $y \not\in \O_x$ the bounds
 \begin{align*}
\begin{split}
\gd ^\alpha_{R,t}\Lcal^\sigma_{\iota,r}(R,t,x,y) &\ll \sup_{u \leq 2r}\norm{D^u \sigma}_\infty \cdot   {\dist (x, \O_y)}^{-\frac{d-\kappa-1}2-r}, \\ 
\gd^\alpha_{R,t}\mathcal{\tilde R}^\sigma_{\iota,\tilde N}( R,t,x,y,\mu)  &\ll \sup_{u \leq 2\tilde N + \lfloor\frac{\kappa}2+1\rfloor}\norm{D^u \sigma}_\infty \cdot  \dist (x, \O_y)^{-\frac{d-\kappa-1}2-\tilde N} \, \mu^{-\tilde N}
\end{split}
\end{align*}
while for $y \in \O_x$ one has
 \begin{align*}
\begin{split}
\gd^\alpha_{R,t} \Lcal^\sigma_{\iota,r}(R,t,x,y) &\ll \sup_{u \leq 2r}\norm{D^u \sigma}_\infty, \qquad \gd^\alpha_{R,t} \mathcal{\tilde R}^\sigma _{\iota, \tilde N}( R,t,x,y,\mu)   \ll \sup_{u \leq 2\tilde N+ \lfloor \frac\kappa 2+1\rfloor}\norm{D^u \sigma}_\infty \mu^{-\tilde N} 
\end{split}
\end{align*}
uniformly in $x$ and $y$.  {If $K=T$ is a torus and $\sigma \in \mathcal{V}_\mu$,  the remainder estimates can be improved   \cite[Theorem 3.2]{ramacher18} to contain only derivatives of $\sigma$ up to order $2\tilde N$.}
 Finally,
\bqn
 \Phi_{\iota,x,y}^0(R,t)= R \,  c_{x,y} (t), \qquad c_{x,y} (t)= \pm \frac{\norm{\kappa_\iota(x)-\kappa_\iota(y\cdot k_0^{-1})}}{\grad_\eta \zeta_\iota(t, \kappa_\iota(x), \omega_0)},
 \eqn
  denotes the constant value(s) of $\Phi_{\iota,x,y}$ on (the components of) its critical set, where $(\omega_0,k_0)$  is some point in $\Crit \, \Phi_{\iota,x,y}$.  If $y \in \O_x$ one has $\Phi_{\iota,x,y}^0(R,t)=0$. As already noted in the proof of Proposition \ref{prop:31.05.2017},  $a_\iota$ is a polyhomogeneous symbol of order $0$, so that  the  dependence of the amplitude on $\mu$ does not interfer with the asymptotics.  Putting \eqref{eq:20.01.2017} and \eqref{eq:13.06.2016b} together we obtain 
\begin{align*}
\begin{split}
K_{\widetilde s_\mu \circ \Pi_\sigma}(x,y) =&   \mu^{d-\frac{\mathrm{codim}\, \Crit \, \Phi_{\iota, x,y}}{2}}    d_\sigma \sum_\iota   \int_{\R}\int _{\R} e^{i\mu[t-Rt]} e^{i\mu \Phi_{\iota, x,y}^0(R,t)}  \left [  \sum_{r=0}^{\tilde N-1} \Lcal^\sigma_{\iota,r}(R,t,x,y) \mu^{-j} \right. \\ 
&\left. + \mathcal{\tilde R}^\sigma _{\iota,\tilde N}( R,t,x,y,\mu)    \right ]  \d R\d t\\
\end{split}
\end{align*}
up to terms of order $O(\mu^{-\infty})$ uniform in $x$ and $y$. We now apply the stationary phase principle \cite[Theorem 7.7.5]{hoermanderI} to the $(R,t)$-integral. If $y \in \O_x$,  the phase function  simply reads $t(1-R)$, and the only critical point is $(R_0,t_0)=(1,0)$, which is non-degenerate, the determinant of  the Hessian being $-1$. Therefore, the necessary conditions for an application of the principle are fulfilled, yielding the assertion of the proposition in this case.  In case that $y \not \in \O_x$, the phase function is given by $t(1-R) +\Phi^0_{\iota,x,y}(R,t)$, and the determinant of the matrix of its second derivatives is given by
\bqn
-(1-c'_{x,y}(t))^2 \approx- \left (1  \pm O({\norm {\kappa_\iota(x) - \kappa_\iota(y \cdot k_0^{-1} )}})    \right )^2.
\eqn
  By choosing the charts $Y_\iota$ sufficiently small so that $\|\kappa_\iota(x) - \kappa_\iota(y \cdot k_0^{-1})\| \ll 1$, we can  achieve that  in a  sufficiently small neighborhood of $(R,t)=(1,0)$ the phase function $t(1-R) +\Phi^0_{\iota,x,y}(R,t)$ has, if at all, only non-degenerate, hence isolated, critical points.  If we now apply the stationary phase theorem, the proposition follows. 
\end{proof}

From Proposition \ref{prop:30.01.2017}  equivariant convex bounds for eigenfunctions  can be easily inferred. Indeed, recall that the test function $\rho \in \S(\R,\R^+)$ was chosen such that $\hat \rho(0)=1$ and $\supp \hat \rho \subset (-\delta/2,\delta/2)$ for some arbitrary $\delta>0$.  By choosing $\delta$ sufficiently small, one can even achieve that $\rho >0$ on $[-1,1]$, compare  \cite[Proof of Lemma 2.3]{duistermaat-guillemin75}. But then 
\begin{align*}
\begin{split}
 \min_{ \nu \in [-1,1]}\rho(\nu)\underbrace{\sum_{{\mu_j \in (\mu,\mu+1],}{\, \phi_j \in \L^2_\sigma(M)}} |\phi_j(x)|^2}_{=K_{s_\mu \circ \Pi_\sigma}(x,x)}   & \leq  \underbrace{\sum_{j \geq 0, \, \phi_j \in \L^2_\sigma(M)} \rho(\mu-\mu_j) |\phi_j(x)|^2}_{= K_{ \tilde s_\mu \circ \Pi_\sigma}(x,x)}\\ & =O\Big (d_\sigma \,  \mu^{d-\kappa-1}   \sup_{u \leq \lfloor \kappa/2+1\rfloor}\norm{D^u \sigma}_\infty  \Big ),
\end{split}
\end{align*}
yielding a corresponding bound for  $K_{s_\mu \circ \Pi_\sigma}(x,x)$,  compare \cite[Remark 4.4 (2)]{ramacher16}. In view of the equality $\norm {s_\mu\circ \Pi_\sigma}_{\L^2 \to \L^\infty}^2\equiv \sup_{x\in M} K_{ s_\mu \circ \Pi_\sigma}(x,x)$, one finally obtains the   equivariant convex bound
\bqn
\norm{\phi_j}_{\infty} \ll \, \lambda_j^{\frac{d-\kappa-1}{2m}} \left (\sqrt{  d_\sigma  \sup_{u \leq  \lfloor \kappa/2+1\rfloor}\norm{D^u \sigma}_\infty }\right )
\eqn
for any  $\phi_j\in \L^2_\sigma(M)$ and $\sigma \in \widehat K$, see  \cite[Proposition 5.1 and Eq. (5.4)]{ramacher16}. As in the non-equivariant case, the kernels $K_{\tilde s_\mu \circ \Pi_\sigma}(x,y)$ exhibit a caustic behaviour in their dependence on $x,y$, which will be crucial for the derivation of equivariant subconvex bounds. {In case that $K=T$ is a torus and $\sigma \in \mathcal{V}_\mu$, the bounds above are independent of $\sigma$, see \cite[Proposition 5.1]{ramacher18}.}

\section{Spectral asymptotics for kernels of Hecke operators}
\label{sec:4}

Keep  the notation of  Sections \ref{sec:hecke} and \ref{sec:FIO}.  The main goal of this paper consists in proving subconvex bounds for {Hecke--Maass forms of rank $1$} on the compact $d$-dimensional Riemannian manifold $M=\Gamma_\chi \bsl G$. To this purpose, we shall first derive asymptotics for kernels of Hecke operators in the eigenvalue and isotypic aspect. 
Recall  that $K$ acts on $G$ and  $M$ from the right  in an isometric  and effective way,  the isotropy group of a point $\Gamma_\chi g\in \Gamma_\chi \bsl G$ being conjugate 
to the finite group $gKg^{-1}\cap \Gamma_\chi$. Hence, all $K$-orbits in $\Gamma_\chi\bsl G$ are either principal or exceptional, and of dimension $\dim K$. Since the maximal compact subgroups of $G$ are precisely the conjugates of $K$, exceptional $K$-orbits arise from elements in $\Gamma_\chi $ of finite order.  
%Now, let $\Jbb_G:T^\ast \Xbb \rightarrow \g^\ast$ be the momentum map of the right $G$-action on $\Xbb$  and $\mathrm{res}: \g^\ast \rightarrow \k^\ast$ the natural restriction map. Then  $\Jbb_K=\mathrm{res} \circ \Jbb_G$ is  the momentum map of the right $K$-action on $\Xbb$. As usual, let  $\Omega := \Jbb_K^{-1}(\{0\})$.
Consider now the right regular representation $\pi$ of $K$ on $\L^2(\Gamma_\chi\bsl G)$ together with the corresponding  Peter-Weyl decomposition \eqref{eq:PW}, and suppose that  $P$ commutes with $\pi$ and  the Hecke operators $\T^\chi_{\Gamma \beta \Gamma}$, which commute with the right regular $K$-representation as well.
 %In view of \eqref{eq:directsum}, we regard $\L^2_\chi(\Gamma \bsl X)$ as a closed subspace in $\L^2(M)$.
To describe  the growth of simultaneous eigenfunctions of $P$ and $T^\chi_{\Gamma \beta \Gamma}$ in  the $\sigma$-isotypic component 
\bq
\label{eq:4.1}
\L^2_{\sigma,\chi}(\Gamma\bsl G):=\L^2_\sigma(\Gamma_\chi\bsl G)\cap \L^2_\chi(\Gamma\bsl G), \qquad \chi \in \widehat \Gamma, \, \sigma \in \widehat K, 
\eq
of $\L^2_\chi(\Gamma\bsl G)$, we are interested in spectral asymptotics for the Schwartz kernel of  the operator 
$$
%Q_{\chi,\beta,\sigma}:=%T_{h(\chi)} \circ \Pi_\sigma \circ Q \circ T^\chi_{\Gamma \beta \Gamma}\circ T_{h(\chi)}=
\Pi_\sigma  \circ T_{h(\chi)} \circ T^\chi_{\Gamma \beta \Gamma} \circ Q   \circ T_{h(\chi)}  \circ \Pi_\sigma = \T^\chi_{\Gamma \beta \Gamma}\circ Q    \circ \Pi_\sigma: \, \L^2 (\Gamma_\chi\bsl G) \, \longrightarrow \, \L^2 (\Gamma_\chi\bsl G).
$$
Let $\mklm{\phi_j}_{j\geq 0}$ be an orthonormal basis of $\L^2(\Gamma_\chi \bsl G)$  consisting of simultaneous eigenfunctions  of $P$ and $\T^\chi_{\Gamma \beta \Gamma}$ compatible with the decompositions \eqref{eq:directsum} and  \eqref{eq:PW}. Applying the Hecke operators $\T^\chi_{\Gamma \beta \Gamma}$ to the spectral expansion \eqref{eq:02.02.2017} of the spectral function of $Q \circ \Pi_\sigma$ yields  
 \bq
\label{eq:pretrace}
\sum_{\mu_j \leq \mu, \, \phi_j \in \L^2_\sigma(M)}  \lambda_j(\chi, \beta) \phi_j(x) \overline{ \phi_j(y)}=[\Gamma:\Gamma_\chi]^{-1} \sum_{\stackrel{\mu_{j}\leq \mu, \, \phi_j \in \L^2_\sigma(\Gamma_\chi\bsl G)}{\alpha \in \Gamma_\chi \bsl\Gamma\beta\Gamma}} \overline{\chi(\alpha)} \phi_j(\alpha \cdot x) \overline{\phi_j(y)}.
\eq
  In order to get an asymptotic description of the right-hand side of \eqref{eq:pretrace}, we consider    the composition  $\T^\chi_{\Gamma \beta \Gamma} \circ \tilde s_\mu \circ \Pi_\sigma$ with the approximate spectral projection $\tilde s_\mu$.  Clearly, its Schwartz kernel can be written as  
\bq
\label{eq:24.11.2016a}
K_{\T^\chi_{\Gamma \beta \Gamma} \circ \tilde s_\mu \circ \Pi_\sigma}(x,y) :=\frac 1{[\Gamma:\Gamma_\chi]} \sum_{\alpha \in \Gamma_\chi \bsl \Gamma \beta \Gamma} \overline{\chi(\alpha)} K_{\tilde s_\mu \circ \Pi_\sigma}(\alpha \cdot x,y),
\eq
$K_{\tilde s_\mu \circ \Pi_\sigma}( x,y)$ being as in \eqref{eq:24.11.2016b}, and by Remark \ref{prop:01.02.2017} (1) one immediately deduces 
\bqn
K_{\T^\chi_{\Gamma \beta \Gamma} \circ \tilde s_\mu \circ \Pi_\sigma}(x,x)\ll   \frac{|\Gamma_\chi \bsl \Gamma \beta \Gamma|}{[\Gamma:\Gamma_\chi]} \, d_\sigma \,  \mu^{d-\dim K-1}   \sup_{u \leq \lfloor \dim K/2+1\rfloor}\norm{D^u \sigma}_\infty
\eqn
uniformly in $x$, { or if $K=T$ is a torus, 
\bqn
K_{\T^\chi_{\Gamma \beta \Gamma} \circ \tilde s_\mu \circ \Pi_\sigma}(x,x)\ll   \frac{|\Gamma_\chi \bsl \Gamma \beta \Gamma|}{[\Gamma:\Gamma_\chi]}  \,  \mu^{d-\dim K-1}, \qquad \sigma \in \mathcal{V}_\mu.
\eqn
} 
Nevertheless, to obtain subconvex bounds, more subtle estimates are necessary that take into account the caustic behaviour of the kernels $K_{\tilde s_\mu \circ \Pi_\sigma}( x,y)$ near the diagonal. 
  
\begin{proposition} 
\label{thm:15.09.2016}
Let $\chi \in \widehat \Gamma$, $\sigma \in \widehat K$, $x  \in \Gamma_\chi \bsl G$,  and $\tilde N\in \N$ be arbitrary. Assume that  the cospheres $S^\ast_x(\Gamma_\chi \bsl G)$ are strictly convex. Then, for any $0 <\delta \ll 1$,
\[
K_{\T^\chi_{\Gamma \beta \Gamma} \circ \tilde s_\mu \circ \Pi_\sigma}(x,x)\ll  \frac{d_\sigma}{[\Gamma:\Gamma_\chi]}  \sup_{u \leq  \lfloor \frac{\dim K}2+1\rfloor }\norm{D^u \sigma}_\infty \left\{  \mu^{d-\dim K-1}  M(x,\beta,\delta) +  \mu^{\frac{d-\dim K -1}2} \int_\delta^{C} s^{-\frac12} dM(s)  \right\}
\]
uniformly in $x \in \Gamma_\chi \bsl G$ for some sufficiently large\footnote{It suffices to take $C$ larger than the diameter of any component of $\Gamma_\chi \bsl G/K$.} constant $C>0$ up to terms of order $O(|\Gamma_\chi \bsl \Gamma \beta \Gamma| \;\mu^{-\infty})$, where we set 
\begin{align}
\label{eq:12.02.2017}
M(\delta):=M(x,\beta,\delta)&:=\# \mklm{\alpha \in \Gamma_\chi\bsl\Gamma\beta\Gamma: \, \dist(xK,\alpha \cdot xK)^{d-\dim K- 1} <\delta }
\end{align}
and $\dist(xK,\alpha \cdot xK)\equiv \dist(\Gamma xK,\Gamma \alpha  xK)$. {If $K=T$ is a torus and $\sigma \in \mathcal{V}_\mu$, the better estimate
\[
K_{\T^\chi_{\Gamma \beta \Gamma} \circ \tilde s_\mu \circ \Pi_\sigma}(x,x)\ll  \frac{1}{[\Gamma:\Gamma_\chi]}   \left\{  \mu^{d-\dim K-1}  M(x,\beta,\delta) +  \mu^{\frac{d-\dim K -1}2} \int_\delta^{C} s^{-\frac12} dM(s)  \right\}
\]
holds. }
\end{proposition}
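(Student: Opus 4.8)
\medskip
\noindent\textbf{Plan of proof.} The plan is to start from \eqref{eq:24.11.2016a} restricted to the diagonal,
\[
K_{\T^\chi_{\Gamma\beta\Gamma}\circ\tilde s_\mu\circ\Pi_\sigma}(x,x)=\frac1{[\Gamma:\Gamma_\chi]}\sum_{\alpha\in\Gamma_\chi\bsl\Gamma\beta\Gamma}\overline{\chi(\alpha)}\,K_{\tilde s_\mu\circ\Pi_\sigma}(\alpha\cdot x,x),
\]
and, since $|\chi(\alpha)|=1$, to bound the left hand side by $[\Gamma:\Gamma_\chi]^{-1}\sum_\alpha|K_{\tilde s_\mu\circ\Pi_\sigma}(\alpha\cdot x,x)|$, estimating each summand by Proposition \ref{prop:30.01.2017} applied to the manifold $\Gamma_\chi\bsl G$ with its isometric right $K$--action, all of whose orbits have dimension $\dim K$. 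The driving mechanism is the caustic dichotomy in that proposition: when $\alpha\cdot x$ lies on the $K$--orbit $\O_x$ of $x$ the kernel reaches its full size $\mu^{d-\dim K-1}$, whereas for $\alpha\cdot x\notin\O_x$ it decays like $\dist(x,\O_{\alpha\cdot x})^{-(d-\dim K-1)/2}\mu^{(d-\dim K-1)/2}$. Accordingly I would split the Hecke sum at the scale prescribed by $\delta$.

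Write $n:=d-\dim K-1=\dim G/K-1$. Since $\Gamma_\chi\bsl G\to\Gamma_\chi\bsl G/K$ is a Riemannian submersion with the $K$--orbits as fibres, $\dist(x,\O_{\alpha\cdot x})$ equals the distance in $\Gamma_\chi\bsl G/K$ between the images of $x$ and $\alpha\cdot x$, which dominates the distance $\dist(xK,\alpha\cdot xK)$ on $\Gamma\bsl G/K$ appearing in \eqref{eq:12.02.2017}; hence all the pointwise bounds below may be phrased through the latter. Split
\[
\Gamma_\chi\bsl\Gamma\beta\Gamma=A_{\mathrm{near}}\sqcup A_{\mathrm{far}},\qquad A_{\mathrm{near}}:=\{\,\alpha:\dist(xK,\alpha\cdot xK)^{n}<\delta\,\},
\]
so that $\#A_{\mathrm{near}}=M(x,\beta,\delta)$ by \eqref{eq:12.02.2017}, while every $\alpha\in A_{\mathrm{far}}$ satisfies $\alpha\cdot x\notin\O_x$ with $\dist(x,\O_{\alpha\cdot x})\geq\delta^{1/n}$ bounded below, so that $\dist(x,\O_{\alpha\cdot x})^{-1}\ll\mu$ for all large $\mu$ and the off--diagonal bounds of Proposition \ref{prop:30.01.2017} are effective on $A_{\mathrm{far}}$ (cf. Remark \ref{prop:01.02.2017}(2)). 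On $A_{\mathrm{near}}$ I would use the crude uniform bound of Remark \ref{prop:01.02.2017}(1), $|K_{\tilde s_\mu\circ\Pi_\sigma}(\alpha\cdot x,x)|\ll d_\sigma\mu^{n}\sup_{u\leq\lfloor\dim K/2+1\rfloor}\norm{D^u\sigma}_\infty$, which, summed over the $M(\delta)$ elements and divided by $[\Gamma:\Gamma_\chi]$, produces the first term. On $A_{\mathrm{far}}$, Proposition \ref{prop:30.01.2017} yields
\[
|K_{\tilde s_\mu\circ\Pi_\sigma}(\alpha\cdot x,x)|\ll d_\sigma\mu^{n/2}\sup_{u\leq\lfloor\dim K/2+1\rfloor}\norm{D^u\sigma}_\infty\,\dist(xK,\alpha\cdot xK)^{-n/2}+O(\mu^{-\infty})
\]
uniformly in $\alpha$, the $r\ge1$ terms of the expansion (if $\tilde N\ge1$) and the remainder $\cR^\sigma_{\tilde N}$ being absorbed into the leading one since $\dist(x,\O_{\alpha\cdot x})^{-1}\mu^{-1}\ll1$ there. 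Setting $s_\alpha:=\dist(xK,\alpha\cdot xK)^{n}$, the leading terms reassemble into the Stieltjes integral $\sum_{\alpha\in A_{\mathrm{far}}}s_\alpha^{-1/2}=\int_\delta^{C}s^{-1/2}\,dM(s)$ once $C$ exceeds every $s_\alpha$ (possible since $\Gamma_\chi\bsl G$ is compact), giving the second term, while the error terms sum to $O(|\Gamma_\chi\bsl\Gamma\beta\Gamma|\,\mu^{-\infty})$ since there are at most $|\Gamma_\chi\bsl\Gamma\beta\Gamma|$ of them and $[\Gamma:\Gamma_\chi]\ge1$. Adding the two contributions gives the asserted estimate. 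In the toric case $K=T$ with $\sigma\in\mathcal{V}_\mu$ the argument is identical, except that one invokes the sharpened $\tilde N=0$ remainder bounds of Proposition \ref{prop:30.01.2017} and Remark \ref{prop:01.02.2017}(1), in which the relevant supremum is $\sup_{u\leq0}\norm{D^u\sigma}_\infty=\norm{\sigma}_\infty=1$ as $\sigma$ is a unitary character; together with $d_\sigma=1$ this deletes the factor $d_\sigma\sup_{u\leq\lfloor\dim K/2+1\rfloor}\norm{D^u\sigma}_\infty$ from both terms.

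I expect the delicate point to be the passage from the pointwise off--diagonal estimate to the Stieltjes integral $\int_\delta^{C}s^{-1/2}\,dM(s)$: the cutoff defining $A_{\mathrm{near}}$ must match \eqref{eq:12.02.2017} precisely, on the complement the summand must genuinely be dominated --- after absorbing the subleading contributions and the $O(\mu^{-\infty})$ remainder --- by a fixed multiple of $d_\sigma\mu^{n/2}\sup_{u}\norm{D^u\sigma}_\infty\,s_\alpha^{-1/2}$ uniformly in $\alpha$, and the $O(\mu^{-\infty})$ terms arising from the finitely many cosets must be controlled uniformly in $\alpha$. The supporting geometric facts --- that the distance from a point to a fibre of a Riemannian submersion equals the base distance, and that every $K$--orbit in $\Gamma_\chi\bsl G$ has dimension $\dim K$ --- are standard but should be recorded, and the strict convexity of the cospheres $S^\ast_x(\Gamma_\chi\bsl G)$ is precisely the hypothesis that makes the off--diagonal asymptotics of Proposition \ref{prop:30.01.2017} available.
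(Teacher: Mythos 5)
Your proposal is correct and follows essentially the same route as the paper's proof: split $\Gamma_\chi\bsl\Gamma\beta\Gamma$ at the cutoff $\dist(xK,\alpha\cdot xK)^{d-\dim K-1}<\delta$, bound the near terms by the uniform estimate of Remark~\ref{prop:01.02.2017}(1), bound the far terms by the off-diagonal asymptotics of Proposition~\ref{prop:30.01.2017}, and recognize the resulting sum over far $\alpha$ as a Stieltjes integral against $dM$. You also make explicit a point the paper uses tacitly, namely that the distance $\dist(x,\O_{\alpha\cdot x})$ in $\Gamma_\chi\bsl G/K$, which is what Proposition~\ref{prop:30.01.2017} actually controls, dominates the distance $\dist(xK,\alpha\cdot xK)$ taken in $\Gamma\bsl G/K$, so the off-diagonal bound can be phrased in terms of the latter; that is the right justification and matches the intended reading.
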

\begin{proof}
By   Proposition \ref{prop:30.01.2017} one deduces as $\mu\to +\infty$
\begin{gather*}
 \sum_{\stackrel{\alpha \in \Gamma_\chi \bsl \Gamma \beta \Gamma,}{\dist (xK,\alpha \cdot xK)^{d-\dim K-1} \geq \delta}} \overline{\chi(\alpha)} K_{\tilde s_\mu \circ \Pi_\sigma}(\alpha \cdot x,x) \\ \nonumber \ll d_\sigma \, \mu^{(d-\dim K-1)/2}\sum_{\stackrel{\alpha \in \Gamma_\chi \bsl \Gamma \beta \Gamma,} {\dist (xK,\alpha \cdot xK)^{d-\dim K-1} \geq \delta}}   \dist(xK,\alpha \cdot xK)^{-(d-\dim K-1)/2}\sup_{u \leq \lfloor \frac{\dim K}2 +1\rfloor} \norm{D^u \sigma}_\infty.
\end{gather*}
Furthermore, by Remark \ref{prop:01.02.2017} (1) one has the uniform bound 
$$
K_{ \widetilde s_\mu \circ \Pi_\sigma}(x,y) =O\Big (d_\sigma \,  \mu^{d-\dim K-1}  \sup_{u \leq  \lfloor \frac{\dim K}2 +1\rfloor}\norm{D^u \sigma}_\infty  \Big ).$$ 
In view of  \eqref{eq:24.11.2016a} we therefore obtain 
\begin{align*}
{[\Gamma:\Gamma_\chi]} &  K_{\T^\chi_{\Gamma \beta \Gamma} \circ \tilde s_\mu \circ \Pi_\sigma}(x,x) = \sum_{\stackrel{\alpha \in \Gamma_\chi \bsl \Gamma \beta \Gamma,}{\dist (xK,\alpha \cdot xK)^{d-\dim K-1} <\delta}}  \overline{\chi(\alpha)} K_{\tilde s_\mu \circ \Pi_\sigma}(\alpha \cdot x,x)\\ &+\sum_{\stackrel{\alpha \in \Gamma_\chi  \bsl \Gamma \beta \Gamma,}{\dist (xK,\alpha  \cdot xK)^{d-\dim K-1} \geq \delta}} \overline{\chi(\alpha)} K_{\tilde s_\mu \circ \Pi_\sigma}(\alpha \cdot x,x)\\
&\ll  d_\sigma \,  \mu^{d-\dim K-1}  \sup_{u \leq \lfloor \frac{\dim K}2 +1\rfloor}\norm{D^u \sigma}_\infty M(x,\beta,\delta) \\ &+  d_\sigma \, \mu^{(d-\dim K -1)/2}  \sup_{u \leq \lfloor \frac{\dim K}2 +1\rfloor} \norm{D^u \sigma}_\infty    \int_\delta^{C} s^{-1/2} dM(s) \end{align*}
by definition of the Stieltjies integral. {In case that $K=T$ is a torus, corresponding better estimates hold,  and the assertion follows.} 
\end{proof}

\section{Subconvex bounds on $\Gamma_\chi \bsl \SL(2,\R)$ for arithmetic congruence lattices}
\label{sec:IS}

In this section, we shall use the kernel asymptotics derived in the previous section  to prove  subconvex bounds on the quotient $\Gamma \bsl \SL(2,\R)$, where $\Gamma$ is an   arithmetic congruence lattice as considered by Iwaniec and Sarnak \cite{iwaniec-sarnak95}.

\subsection{Arithmetic congruence lattices} \label{sec:Arithmetic congruence lattices}
   To introduce the setting, let $A$ be an indefinite quaternion division algebra over $\Q$.
Hence, there exist two square-free integers $a$ and $b$ such that $a>0$ and
\[
A=\Q+\Q\, \omega+\Q\, \Omega+\Q\, \omega\, \Omega
\]
where $\omega^2=a$, $\Omega^2=b$, and $\omega\Omega=-\Omega\omega$.
For each element $x=x_0+x_1\omega+x_2\Omega+x_3\omega\Omega$, its conjugate  is defined as $\overline{x}:=x_0-x_1\omega-x_2\Omega-x_3\omega\Omega$, and  its trace and norm as $\tr(x):=x+\overline{x}$ and $N(x):=x\overline{x}$, respectively. 
Let $\cR$ be an \emph{order} of $A$, that is, $\cR$ is a finitely generated free $\Z$-module, $\cR$ is a subring of $A$ cotaining $1$, and $\cR\otimes_\Z\Q=A$.
For each prime number $p$,  set $A_p:=A\otimes\Q_p$ and $\cR_p:=\cR\otimes\Z_p$.
Let $d_A$ be the product of all primes $p$ such that $A_p$ is a division algebra. Then $d_A$ is called the \emph{discriminant} of $A$. $d_A$ is greater than $1$ and square free, and $A_p$ is isomorphic to $M(2,\Q_p)$ if $p$ does not divide $q$.
Throughout this section, we assume that $\cR$ is an \emph{Eichler order of level $L$}, where $L$ is a natural number such that $(d_A,L)=1$.
Hence, $\cR$ satisfies
\begin{enumerate}
\item $\cR_p$ is the maximal order of $A_p$ if $p$ divides $d_A$, or
\item $\cR_p$ is conjugate to $\begin{pmatrix}\Z_p&\Z_p \\ L\Z_p&\Z_p \end{pmatrix}$.
\end{enumerate}
Note that any Eichler order is included in a maximal order.
Particularly, $\cR$ is maximal when $L=1$. Now, choose an embedding $\theta:A\to M(2,\Q(\sqrt{a}))\subset M(2,\R)$ by setting 
\[
\theta(x_0+x_1\omega+x_2\Omega+x_3\omega\Omega)=\begin{pmatrix}x_0-x_1\omega & x_2+x_3\omega \\ b(x_2-x_3\omega) & x_0+x_1\omega \end{pmatrix}.
\]
For each natural number $n\in \N_\ast$, we set
\[
\cR(n):=\{\alpha\in \cR \mid N(\alpha)=n\}.
\]
Then $\Gamma:=\theta(\cR(1))$ becomes a cocompact lattice of $G:=\SL(2,\R)$.
Note that $\tr(x)=\tr(\theta(x))$ and $N(x)=\det(\theta(x))$ hold for any $x$ in $A$. In what follows, we identify $A$ with $\theta(A)$.
Especially, we will often use $\Gamma$ instead of $\cR(1)$.

Next, let $\chi$ be a Dirichlet character on $(\Z/L\Z)^\times$. In view of  the product  isomorphism $(\Z/L\Z)^\times\cong \prod_{p|L} (\Z_p/L\Z_p)^\times$  given by the diagonal embedding $a\mapsto (a)_p$, a character $\chi_p$ can be defined on $(\Z_p/L\Z_p)^\times$  by restriction of $\chi$ to each factor. Set
\[
\Xi_\cR:=\{\alpha\in\cR\mid N(\alpha)>0, \;  \;  (N(\alpha),L)=1 \}\quad \text{and} \quad \cR_L:=\{(x_p)_{p|L} \mid x_p\in \cR_p ,\;\; N(x_p)\not\in p\Z_p \},
\]
and define a character $\chi_L$ on the semigroup $\cR_L$ by
$$\cR_L\ni \left (\begin{pmatrix}a_p&b_p \\ Lc_p&d_p \end{pmatrix}\right)_{p|L}\mapsto \prod_{p|L}\overline{\chi_p(a_p)}\in\C.$$
Composing $\chi_L$ and the diagonal embedding $\Xi_\cR\subset \cR_L$, we obtain a character $\chi$ on the sub-semigroup $\Xi_\cR$ of $A^\times$. By the inclusion $\Gamma\subset \Xi_\cR$, $\chi$ becomes a character on $\Gamma$ which is called a \emph{Nebentypus character}. Notice that there are only finitely many Nebentypus characters for each fixed Eichler order $\cR$. Now, because $\Gamma$ and $\alpha^{-1}\Gamma\alpha$ are commensurable \cite[Proposition 4.1]{PR},  an inclusion map $\psi:\cR(n)\to C(\Gamma)$ is given by
\[
\psi(\alpha)=n^{1/2} \alpha ,\qquad  \alpha\in \cR(n).
\]
Since the subset $\cR(n)$ is left and right $\Gamma$-invariant, and it is known that $\Gamma\bsl \cR(n)$ is finite \cite[Section 5.3]{Miyake}, we can introduce the \emph{Hecke operators} 
\[
(T^\chi_nf)(x):=\sum_{\alpha\in \Gamma\bsl \cR(n)} \overline{\chi(\alpha)}\, f(\alpha\cdot x), \qquad f\in \L^2_\chi(\Gamma \bsl G).
\]
%These operators coincide with the previously defined Hecke operators.
Indeed, since $\psi(\cR(n))$ is given as a disjoint union of double cosets $\sqcup_j \Gamma \alpha_j \Gamma$, the operator $T^\chi_n$ coincides with the sum  $\sum_j T^\chi_{\Gamma \alpha_j \Gamma}$ of Hecke operators defined in \eqref{eq:heckechar}. 
%\footnote{It is well-known that the definition for $H(\Gamma,\Xi_\cR)$ is compatible with another definition \cite[(2.7.2) in p.71]{Miyake}.}
In particular, we have also the Hecke operators 
\bqn 
\T^\chi_n: \L^2(\Gamma_\chi \bsl G) \quad \longrightarrow\quad  \L^2(\Gamma_\chi\bsl G), \quad (\T^\chi_n f) (x):= \frac 1 {[\Gamma:\Gamma_\chi]} \sum_{ \alpha \in \Gamma_\chi \bsl \cR(n)}  \overline{\chi(\alpha)} f(\alpha \cdot x),
\eqn
compare \eqref{eq:heckechar0} and  Diagram \ref{diagram}. Furthermore, set
\[
q:=d_AL.
\]
For natural numbers $n$ such that $(n,q)=1$, the $T^\chi_n$ are self-dual, commute with the Beltrami--Laplace operator $\Delta$ on $G$, and satisfy the composition rule \cite[Section 5.3]{Miyake}
\bq
\label{eq:08.09.2016a}
T^\chi_nT^\chi_m=\sum_{d|(m,n)}d \cdot  {\chi(d)} \, T^\chi _{nm/d^2}.
\eq

Next,  recall that the group $\GL(2,\R)^+:=\{ x\in M(2,\R) \mid  \det(x)>0\}$ acts  transitively on the upper half plane $\mathbb{H}:=\mklm{z \in \C \mid \Im z>0}$ by fractional transformations\footnote{ Note that the center $\begin{pmatrix}a&0 \\ 0& a \end{pmatrix}$, $a\in\R^\ast$, acts trivially on $\bH$.}
\[
g \cdot z=\frac{az+b}{cz+d}, \qquad z\in\bH, \quad g=\begin{pmatrix}a&b \\ c&d \end{pmatrix},
\]
by which   $\bH$ becomes isomorphic to  the homogeneous space $G/K$, where $K:=\SO(2)$, and we define
\bq
\label{eq:j(g,z)}
j(g,z):=(cz+d) (\det g)^{1/2}. 
\eq
 In what follows, we shall identify $\Gamma_\chi\backslash G/K\simeq \Gamma_\chi \bsl \bH$ with a subset in $\bH$, and endow it  with the standard hyperbolic distance on $\bH$ given by \cite[Section 1.1]{Iwaniec}
\bqn 
\dist_\bH (z,w) :=\mathrm{arcosh} \, \big (1+ u(z,w)/2\big )=\ln\left (\frac{|z-\overline w| +|z-w|} {|z-\overline w| -|z-w|}\right  )\approx |z-w|, 
\eqn
where 
\bq
\label{eq:hypdist}
u(z,w):=\frac{|z-w|^2}{\Im z \, \Im w}, \qquad z, w \in \mathbb{H}. 
\eq
By this,  $\Gamma_\chi \bsl \bH$  becomes a compact hyperbolic surface. Note that $\dist_\bH$ agrees with  the distance function $\dist$ introduced at the beginning of Section \ref{sec:hecke}. Furthermore, one has the following important result of Iwaniec and Sarnak. 
\begin{lem} 
\label{lem:05.09.2016} For arbitrary $\eps>0$ one has
\bqn
 \# \mklm{ \alpha \in \mathcal{R}(n): u(z, \alpha \cdot z) < \delta} \ll_\epsilon ( \delta + \delta^{1/4}) n^{1+\eps} + n^\eps
\eqn
uniformly in $z$.
% \footnote{Possibly, the slightly stronger estimate  \cite[p. 59]{sarnak} \bqn
%M(z,n,\delta) \leq \# \mklm{ \alpha \in \mathcal{R}(n): u(z, \alpha \cdot z) < \delta} \ll_\epsilon ( \delta + \delta^{1/2}) n^{1+\eps} + n^\eps
%\eqn
%holds for arbitrary $\eps>0$ uniformly in $z$. {\bf Perhaps ask Sarnak or Zaharescu...}} 
\end{lem}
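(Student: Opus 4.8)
The plan is to count, for fixed $z \in \bH$ and $\delta > 0$, the elements $\alpha \in \cR(n)$ with $u(z,\alpha\cdot z) < \delta$, translating the geometric condition into an arithmetic one. First I would use the explicit formula for the hyperbolic displacement: writing $\alpha = \theta(\xi)$ with $\xi \in \cR$, $N(\xi) = n$, a direct computation with $j(\alpha,z)$ and $\Im(\alpha \cdot z) = n\,\Im z / |j(\alpha,z)|^2$ gives
\[
u(z,\alpha\cdot z) = \frac{|\alpha\cdot z - z|^2}{\Im z\,\Im(\alpha\cdot z)} = \frac{|c z^2 + (d-a)z - b|^2}{n\,(\Im z)^2},
\]
where $\alpha = \begin{pmatrix} a & b \\ c & d\end{pmatrix}$. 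The quadratic polynomial $cz^2 + (d-a)z - b$ has coefficients that are $\Z$-linear combinations of the entries of $\xi$; its discriminant is $(d-a)^2 + 4bc = (a+d)^2 - 4(ad-bc) = \tr(\alpha)^2 - 4n$. So $u(z,\alpha\cdot z) < \delta$ forces the quantity $t:=\tr(\alpha) \in \Z$ to satisfy $|t^2 - 4n| \ll_z n\delta$ roughly (more precisely, the condition localizes $\alpha$ near the elliptic/parabolic locus), and at the same time pins down the remaining parameters in a thin region.

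The key steps, in order: (1) fix $t = \tr(\alpha)$; the set $\{\xi \in \cR : N(\xi) = n, \tr(\xi) = t\}$ is the set of integral points of norm $n$ in the rank-$3$ lattice $\cR \cap \{\tr = t\}$, equivalently representations of $n - (t/2)^2$ (or $4n - t^2$) by a fixed ternary quadratic form coming from the reduced norm on the trace-zero part; by the classical bound on representations by ternary forms this is $\ll_\eps n^\eps \cdot (\text{something})$, but more importantly one gets $\ll_\eps (4n-t^2)^{1/2+\eps}$ when $4n - t^2 \neq 0$ by elementary divisor counting, and $\ll_\eps n^\eps$ when $4n-t^2 = 0$. (2) Sum over the admissible $t$: the constraint $u(z,\alpha\cdot z)<\delta$, after expanding, says that $z$ lies within hyperbolic distance $\approx \delta^{1/2}$ of the fixed point(s) of $\alpha$ acting on $\bH$; combined with the bounded geometry this restricts $t$ to an interval around $\pm 2\sqrt n$ of length $\ll n^{1/2}\delta^{1/2}$ plus lower-order, while forcing $4n - t^2 \ll n\delta$. (3) Plug in: $\sum_{t} (4n - t^2)^{1/2+\eps} \ll n^\eps \sum_{|t|\le 2\sqrt n,\ 4n-t^2 \ll n\delta} (n\delta)^{1/2+\eps} \ll n^{1+2\eps}(\delta^{1/2} \cdot \delta^{1/2}+\dots)$; being careful with the shape of the region produces exactly the claimed $(\delta + \delta^{1/4})n^{1+\eps}$, and the separate term $n^\eps$ accounts for the degenerate diagonal contribution $4n = t^2$ (perfect square $n$, scalar-like elements).

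I would follow Iwaniec--Sarnak \cite{iwaniec-sarnak95} essentially verbatim here, since the lattice $\cR(n)$ inside the quaternion order behaves exactly as in their setting; the Eichler-order and Nebentypus refinements do not affect the counting, only the indexing set, which is still $\{\xi \in \cR : N(\xi) = n\}$. The main obstacle is getting the exponent of $\delta$ sharp: the naive bound loses a power of $\delta$, and the $\delta^{1/4}$ (rather than $\delta^{1/2}$) term arises from the worst case where the interval of admissible traces $t$ and the size of $4n - t^2$ interact — one must split the $t$-sum according to whether $|t \mp 2\sqrt n|$ is small or large and optimize, exactly as in the lattice-point count for a hyperbolic surface. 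A secondary technical point is ensuring uniformity in $z$: since $\Gamma\bsl\bH$ is compact one may assume $z$ ranges over a fixed compact fundamental domain, so all implied constants depending on $\Im z$ are absorbed, and this is where compactness of the Eichler-order quotient is used.
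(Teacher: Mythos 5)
Your overall plan---reprove the lemma by the Iwaniec--Sarnak method---matches the paper, which gives no argument of its own and simply cites \cite[Lemma 1.3]{iwaniec-sarnak95}. However, the specific mechanism you describe for extracting the $\delta$-dependence is wrong, and it is the heart of the matter. You claim that $u(z,\alpha\cdot z)<\delta$ forces $|t^2-4n|\ll n\delta$ and hence confines $t=\tr(\alpha)$ to short intervals around $\pm2\sqrt n$. This is false. Writing $z=x+iy$ and comparing real and imaginary parts of $cz^2+(d-a)z-b$, the condition gives $2cx+(d-a)=O((n\delta)^{1/2})$ and $b+c|z|^2=O((n\delta)^{1/2})$, whence
\[
t^2-4n=(a-d)^2+4bc=-4c^2y^2+O\big(n\delta^{1/2}\big),
\]
so as $c$ ranges over $|c|\ll n^{1/2}$ the discriminant sweeps out essentially all of $[-Cn,\,O(n\delta^{1/2})]$ and $t$ ranges over essentially all of $[-2\sqrt n,2\sqrt n]$. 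Geometrically, an elliptic $\alpha$ whose fixed point lies within distance $O(\delta^{1/2})$ of $z$ satisfies $u(\alpha\cdot z,z)<\delta$ regardless of its rotation angle, hence regardless of its trace; a concrete counterexample is any trace-zero $\beta\in\cR(n)$ evaluated at its own fixed point, where $u=0$ but $|t^2-4n|=4n$. Your step (2) implicitly assumes $z$ is far from the fixed points of $\alpha$, which is precisely the case one cannot exclude (and the lemma must be uniform in $z$). Since your step (3) sums only over $t$ with $4n-t^2\ll n\delta$, the argument misses most of the set being counted and does not close.

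The correct organization is essentially the transpose of yours: the trace is not localized, but for each admissible $t$ the remaining coordinates are. From the displayed relation, $(2cy)^2=4n-t^2+O(n\delta^{1/2})$ confines $c$ to $O(1+n^{1/2}\delta^{1/4})$ values for each fixed $t$ (this is where the exponent $1/4$ is born), while $a-d$ and $b$ are pinned to intervals of length $O((n\delta)^{1/2})$ and the determinant equation together with divisor bounds controls the remaining freedom; the isolated term $n^\eps$ comes from the degenerate elements ($c=0$ or $t^2=4n$), which in a division algebra are central. Note also that your per-trace count ``$\ll_\eps(4n-t^2)^{1/2+\eps}$ by elementary divisor counting'' is a representation number for a \emph{ternary} form, which for the indefinite norm form on the trace-zero part of $\cR$ is infinite unless the bounded-region and proximity constraints are imposed---i.e., the $\delta$-saving must enter the per-trace count, not the range of $t$. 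I would either rebuild the argument along these lines or, as the paper does, invoke \cite[Lemma 1.3]{iwaniec-sarnak95} directly.
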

\begin{proof}
See   \cite[Lemma 1.3]{iwaniec-sarnak95}.
\end{proof}

\subsection{Equivariant subconvex bounds}\label{sec:Equivariant subconvex bounds} With the notation of the previous section, we shall first derive   subconvex bounds for Hecke--Maass forms in $\L_\chi^2(\Gamma \bsl G)$ in the eigenvalue and isotypic aspect for the Beltrami--Laplace operator $\Delta$. For this,  let $\mklm{\phi_j}_{j\geq 0}$ be an orthonormal basis of $\L^2(\Gamma_\chi \bsl G)$  consisting of simultaneous eigenfunctions  of $P_0=\Delta$ and $\T^\chi_n$ compatible with the decompositions \eqref{eq:directsum} and  \eqref{eq:PW}, where $X=G$ and $M=\Gamma_\chi \bsl G$, respectively,   so that with $(n,q)=1$
\begin{align}
\label{eq:08.09.2016b}
\begin{split}
\Delta\phi_j=\lambda_j \phi_j ,\qquad \, \, \,  \, \T^\chi_n\phi_j=\lambda_j(n)\phi_j.
\end{split}
\end{align}
Note that $\lambda_j(n)=0$ if $\phi_j \not \in \L^2_\chi(\Gamma \bsl G)$. Further, let $\sigma \in \widehat K$ be a fixed $K$-type, and $\L^2_{\sigma,\chi}(\Gamma \bsl G)$ be defined as in \eqref{eq:4.1}. When $\sigma$ is trivial, the space $\L^2_{\sigma,\chi}(\Gamma \bsl G)$ can be identified with $\L^2_\chi(\Gamma \bsl G/K)$.  In what follows, we shall make the  identification $\SO(2)\simeq S^1 \subset \C$,  so that the characters of $K$ are given by the exponentials 
\bqn
\sigma_l(e^{i\theta}):=e^{il\theta}, \qquad \theta \in [0,2\pi), \quad l \in \Z.
\eqn
Since all irreducible representations of $K$ are one-dimensional,  Proposition \ref{thm:15.09.2016}  yields for any $x \in \Gamma_\chi  \bsl G$ and {$\sigma_l$ with $|l| \ll \mu/\log \mu$ the estimate
\begin{gather*}
K_{\T^\chi_n \circ \tilde s_\mu \circ \Pi_{\sigma_l}}(x,x)\ll    \frac{1}{[\Gamma:\Gamma_\chi]} \left [ \mu \, M(x,n,\delta)  +   \mu^{\frac 12}  \int_\delta^{C} s^{-\frac 12} dM(s) \right ]  
\end{gather*}}
up to terms of order $O(|\Gamma_\chi \bsl \mathcal{R}(n)| \;\mu^{-\infty})$, 
where we set
\begin{align*}
M(\delta):=M(x,n,\delta)&:=\# \mklm{\alpha \in \Gamma_\chi\bsl\cR(n):  u (xK,\alpha \cdot xK) <\delta^2 },
\end{align*}
regarding $xK \in \Gamma_\chi  \bsl G/ K\simeq \Gamma_\chi  \bsl \bH$ as an element in $\C$, and took into account that 
 for suitable constants $c_1,c_2>0$
\bq
\label{eq:31.08.2016}
c_1 u(z,w) \leq \dist(z,w)^2\equiv \dist_\bH (z,w)^2 \leq c_2 u(z,w), \qquad z, w \in \Gamma_\chi \backslash \mathbb{H}. 
\eq
In order to derive a uniform bound for 
$
K_{\T^\chi_n \circ \widetilde s_\mu \circ \Pi_{\sigma_l} }(x,x),
$ 
note that by Lemma \ref{lem:05.09.2016} one has with $N(s):=s^{-1/2}$ and $\delta=\mu^{-1}$
\begin{align*}
 \int_{\delta}^C  s^{-1/2} d M(s)&=\underbrace{N(C) M(C)}_{\ll_\eps n^{1+\eps}}  -\underbrace{N(\delta)  M(\delta)}_{\ll_\eps \delta^{-1/2} [ ( \delta^2 + \delta ^{1/2}) n^{1+\eps}+n^\eps]} - \underbrace{\int_{\delta} ^C  M(s) { N'(s) \d s }}_{\ll_\eps (s^{3/2} + \log s) n^{1+\eps}+  s^{-1/2} n^\eps\vert^C_\delta}\\ 
 & \ll_\eps n^{1+\eps}+ \mu^{1/2} n^\eps + n^{1+\eps}\log\mu
 \ll_\eps (\mu^{1/2} + n \log\mu) n^\eps,
\end{align*}
 Taking everything together we have shown

\begin{thm}
\label{thm:23.11.2016}
For any $n \in \N_\ast$, $\mu>0$, {and $\sigma_l$ with $|l| \ll \mu /\log \mu$ the uniform bound
\bqn 
K_{\T^\chi_n \circ \widetilde s_\mu \circ \Pi_{\sigma_l} }(x,x) \ll_\eps \,  (\mu+ n \mu^{1/2}\log\mu) \, n^\eps
\eqn}
holds up to terms of order $O(n^{1+\eps} \,\mu^{-\infty})$, where $x\in \Gamma_\chi \bsl G$,  and  $\chi \in \widehat \Gamma$ is a Nebentypus character. 
\end{thm}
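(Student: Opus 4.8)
The plan is to combine the caustic-sensitive kernel bound of Proposition~\ref{thm:15.09.2016} with the Iwaniec--Sarnak lattice point count of Lemma~\ref{lem:05.09.2016}, optimizing the truncation parameter $\delta$ at the end. Since $M=\Gamma_\chi\bsl\SL(2,\R)$ has dimension $d=3$ and $K=\SO(2)$ is a one-dimensional torus, one has $d-\dim K-1=1$ and $(d-\dim K-1)/2=1/2$, and the hypothesis $|l|\ll\mu/\log\mu$ says precisely that $\sigma_l\in\mathcal{V}_\mu$ in the sense of Proposition~\ref{prop:30.01.2017}; hence the toric refinement of Proposition~\ref{thm:15.09.2016} applies and gives, for every $0<\delta\ll 1$,
\[
K_{\T^\chi_n\circ\widetilde s_\mu\circ\Pi_{\sigma_l}}(x,x)\ll\frac{1}{[\Gamma:\Gamma_\chi]}\Bigl\{\mu\,M(x,n,\delta)+\mu^{1/2}\int_\delta^C s^{-1/2}\,dM(s)\Bigr\}
\]
up to $O(|\Gamma_\chi\bsl\cR(n)|\,\mu^{-\infty})$, where $C$ is larger than the diameter of $\Gamma_\chi\bsl\bH/K$ and $M(x,n,\delta)$ counts the $\alpha\in\Gamma_\chi\bsl\cR(n)$ with $\dist(xK,\alpha\cdot xK)<\delta$. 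Using \eqref{eq:31.08.2016} to pass between $\dist_\bH^2$ and the function $u$ of \eqref{eq:hypdist}, and absorbing constants into $\delta$, the counting function may be taken as $\#\{\alpha\in\Gamma_\chi\bsl\cR(n):u(xK,\alpha\cdot xK)<\delta^2\}$, which is exactly what Lemma~\ref{lem:05.09.2016} controls: $M(x,n,\delta)\ll_\eps(\delta^2+\delta^{1/2})n^{1+\eps}+n^\eps$, uniformly in $x$. Taking $\delta$ of order the diameter also shows $|\Gamma_\chi\bsl\cR(n)|\ll_\eps n^{1+\eps}$, so the error term is $O(n^{1+\eps}\mu^{-\infty})$ as claimed.

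It then remains to set $\delta=\mu^{-1}$ and bound the two terms. The first is immediate: $\mu\,M(x,n,\mu^{-1})\ll_\eps\mu\bigl[(\mu^{-2}+\mu^{-1/2})n^{1+\eps}+n^\eps\bigr]\ll_\eps\mu^{1/2}n^{1+\eps}+\mu\,n^\eps$. For the Stieltjes integral I would integrate by parts with $N(s):=s^{-1/2}$,
\[
\int_\delta^C s^{-1/2}\,dM(s)=N(C)M(C)-N(\delta)M(\delta)+\tfrac12\int_\delta^C M(s)\,s^{-3/2}\,ds,
\]
and feed the bound for $M$ into each piece: the term at $C$ is $\ll_\eps n^{1+\eps}$, the term at $\delta=\mu^{-1}$ is $\ll_\eps\mu^{1/2}\bigl[(\mu^{-2}+\mu^{-1/2})n^{1+\eps}+n^\eps\bigr]\ll_\eps n^{1+\eps}+\mu^{1/2}n^\eps$, and the last integral is $\ll_\eps\int_\delta^C\bigl[(s^{1/2}+s^{-1})n^{1+\eps}+s^{-3/2}n^\eps\bigr]\,ds\ll_\eps n^{1+\eps}\log\mu+\mu^{1/2}n^\eps$. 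Summing gives $\int_\delta^C s^{-1/2}\,dM(s)\ll_\eps(\mu^{1/2}+n\log\mu)n^\eps$, hence $\mu^{1/2}\int_\delta^C s^{-1/2}\,dM(s)\ll_\eps(\mu+n\mu^{1/2}\log\mu)n^\eps$. Adding in the first term, whose $\mu^{1/2}n^{1+\eps}$ part is dominated by $n\mu^{1/2}\log\mu\cdot n^\eps$ in the regime of interest, and using $[\Gamma:\Gamma_\chi]\ge 1$, one arrives at $K_{\T^\chi_n\circ\widetilde s_\mu\circ\Pi_{\sigma_l}}(x,x)\ll_\eps(\mu+n\mu^{1/2}\log\mu)n^\eps$, uniformly in $x\in\Gamma_\chi\bsl G$.

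There is no serious obstacle at this stage: the substantive inputs --- the caustic-aware kernel asymptotics and the lattice point count --- are already supplied by Proposition~\ref{thm:15.09.2016} and Lemma~\ref{lem:05.09.2016}, and the rest is bookkeeping. The one point needing care is the choice $\delta=\mu^{-1}$, dictated by balancing the $\delta$-independent floor $\mu\,n^\eps$ contributed by the $n^\eps$ term of Lemma~\ref{lem:05.09.2016} inside $\mu\,M(\delta)$ against the $\mu^{1/2}\delta^{-1/2}n^\eps$ boundary term of the Stieltjes integral; these coincide at $\delta=\mu^{-1}$, which is therefore essentially optimal. I would also double-check that the toric version of Proposition~\ref{thm:15.09.2016} is genuinely in force --- this is exactly what $|l|\ll\mu/\log\mu$, i.e.\ $\sigma_l\in\mathcal{V}_\mu$, provides, and dropping it would reintroduce a factor $\sup_{u\le 1}\norm{D^u\sigma_l}_\infty$ --- and that the $O(\mu^{-\infty})$ remainder, once summed over the $\ll n^{1+\eps}$ cosets of $\Gamma_\chi\bsl\cR(n)$, stays $O(n^{1+\eps}\mu^{-\infty})$.
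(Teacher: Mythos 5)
Your proposal is correct and follows essentially the same route as the paper: the toric case of Proposition~\ref{thm:15.09.2016} (valid since $|l|\ll\mu/\log\mu$ puts $\sigma_l\in\mathcal{V}_\mu$), the Iwaniec--Sarnak count of Lemma~\ref{lem:05.09.2016} after converting $\dist$ to $u$ via \eqref{eq:31.08.2016}, integration by parts in the Stieltjes integral with $N(s)=s^{-1/2}$, and the choice $\delta=\mu^{-1}$. The bookkeeping matches the paper's term by term, so there is nothing to add.
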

\qed 

\begin{rem}
The previous theorem is the non-spherical analogon of \cite[Lemma 1.2]{iwaniec-sarnak95}. Note that the bounds for the point pair invariants on $\bH$ used  by Iwaniec and Sarnak in order to show \cite[Lemma 1.2]{iwaniec-sarnak95} are better than ours by a factor $(1+u(\alpha\cdot z, z))^{-5/4}$ in the Stieltjes integral, but the lattice point counting function considered by them is unbounded, while ours is a priori bounded.
\end{rem}

 Following the original approach of Iwaniec and Sarnak, we shall now make use of arithmetic amplification to deduce from  Theorem \ref{thm:23.11.2016}    equivariant  subconvex bounds. Since we will later choose $n \ll \mu^A$ for some $A\in \N$, we can neglect the contributions of order $O(n^{1+\eps} \,\mu^{-\infty})$ in the following.  Thus, let $\chi \in \widehat \Gamma $,  $\sigma_l \in \widehat K$ be arbitrary, and $\{\phi_j\}_{j\in \N}$ as in \eqref{eq:08.09.2016b}. Writing
$
\eta_j(n):={\lambda_j(n)}/{\sqrt n} 
$
we deduce with  \eqref{eq:24.11.2016b},  \eqref{eq:24.11.2016a}, \eqref{eq:08.09.2016a}, and \eqref{eq:08.09.2016b} that\footnote{Since the $T^\chi_n$  are adjoint operators for all $n$ with $(n,q)=1$, one has $\eta_j(n)=\overline{\eta_j(n)}$, compare  \cite[Theorem 5.3.8]{Miyake}. More generally, $(T^\chi_{\Gamma \alpha\Gamma})^*=T^\chi_{\Gamma\alpha'\Gamma}$,  where $\alpha'=\det(\alpha)\alpha^{-1}$.} 
\begin{gather*}
\sum_{j \geq 0, \, \phi_j \in \L^2_{\sigma_l}(\Gamma_\chi \bsl G)} \rho(\mu-\mu_j) \phi_j(x) \overline{\phi_j(y)} \eta_j(m) \eta_j(n)
\\ = \sum_{d| (n,m)}  \sum_{j\geq 0, \, \phi_j \in \L^2_{\sigma_l}(\Gamma_\chi \bsl G)} \rho(\mu-\mu_j) \phi_j(x) \overline{\phi_j(y)} \chi(d) \eta_j\left(\frac{nm}{d^2}\right) = \sum_{d| (n,m)} \frac {d\chi(d)}{\sqrt{nm}} K_{\T^\chi_{nm/d^2}  \circ \widetilde s_\mu \circ \Pi_{\sigma_l} }(x,y).
\end{gather*}
{If  one replaces $\mu$ by $\mu \log \mu$ in Theorem \ref{thm:23.11.2016} one obtains for any $\sigma_l\in \widehat K$
\bqn 
K_{\T^\chi_n \circ \widetilde s_\mu \circ \Pi_{\sigma_l} }(x,x) \ll_\eps \,  \log \mu \,(\mu+ n \mu^{1/2}\log\mu) \, n^\eps,
\eqn}
yielding for arbitrary $N \in \N_\ast$ and $\sigma_l$
\begin{align*}
\sum_{j \geq 0, \, \phi_j \in \L^2_{\sigma_l}(\Gamma_\chi \bsl G)} \rho(\mu-\mu_j)  &|\phi_j(x)|^2 \Big |\sum_{n \leq N} z_n  \eta_j(n)\Big |^2 = \sum_{n,m \leq N} \sum_{d| (n,m)}    \frac {d \chi(d)}{\sqrt{nm}} \, z_n \overline{z_m} \, K_{\T^\chi _{nm/d^2} \circ \widetilde s_\mu \circ \Pi_{\sigma_l}}(x,x)\\
& \ll_\eps   N^\eps \mu^\eps \sum_{n,m \leq N} \sum_{d| (n,m)}    \frac d{\sqrt{nm}} |z_n z_m | \Big (\mu + \frac{nm}{d^2} \mu^{1/2} \Big ),
\end{align*}
since $|\chi(d)|=1$, where $z_n \in \C$ are arbitrary complex numbers. A simple computation then gives
\begin{align}
\begin{split}
\label{eq:14.02.2017}
\sum_{j \geq 0, \, \phi_j \in \L^2_{\sigma_l}(\Gamma_\chi \bsl G)} & \rho(\mu-\mu_j) |\phi_j(x)|^2 \Big |\sum_{n \leq N} z_n  \eta_j(n)\Big |^2  \ll_\eps  N^\eps \mu^\eps \left [\mu \sum_{n\leq N}   |z_n |^2 +N \mu^{1/2}   \Big  ( \sum_{n\leq N} |z_n| \Big )^2 \right ].
\end{split}
\end{align}
We thus arrive at 

\begin{proposition}
\label{thm:08.09.2016} For any $\mu>0$, ${\sigma_l} \in \widehat K$,  $\chi \in \widehat \Gamma$, and $N\in \N_\ast$ one has the estimate
\begin{align*}
\sum_{\stackrel{\mu \leq {\sqrt{\lambda_j}}\leq \mu+1,}{ \phi_j \in \L^2_{\sigma_l}(\Gamma_\chi \bsl G)}}  |\phi_j(x)|^2 \Big |\sum_{n \leq N} z_n  \eta_j(n)\Big |^2& \ll_\eps   N^\eps \mu^\eps \left [\mu \sum_{n\leq N}   |z_n |^2 +N \mu^{1/2}  \Big  ( \sum_{n\leq N} |z_n| \Big )^2 \right ]
\end{align*}
uniformly in $x \in \Gamma_\chi \bsl G$. 
\end{proposition}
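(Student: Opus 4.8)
The statement to prove is Proposition~\ref{thm:08.09.2016}, which is an immediate consequence of the preceding estimate \eqref{eq:14.02.2017} once we pass from the smooth weight $\rho(\mu-\mu_j)$ to the sharp cutoff $\mu \le \sqrt{\lambda_j} \le \mu+1$. The plan is to exploit the positivity of $\rho$ on $[-1,1]$, which was arranged by choosing $\delta>0$ small enough (as recalled just before \eqref{eq:14.02.2017}, following \cite{duistermaat-guillemin75}). Concretely, I would first observe that each summand on the left-hand side of the claimed inequality is nonnegative, being of the form $|\phi_j(x)|^2 \, |\sum_{n\le N} z_n \eta_j(n)|^2 \ge 0$; therefore no cancellation can occur and we may freely insert a positive factor.

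**Key steps.** First, set $c:=\min_{\nu\in[-1,1]}\rho(\nu)>0$. For every index $j$ with $\mu \le \mu_j \le \mu+1$ (equivalently $\mu \le \sqrt{\lambda_j} \le \mu+1$, since $\mu_j = \sqrt{\lambda_j}$ here as $m=2$ for $P_0=\Delta$ — wait, more precisely $\mu_j = \sqrt[m]{\lambda_j}$ with $m$ the order, but in this section $Q=\sqrt{\Delta}$ so $\mu_j=\sqrt{\lambda_j}$) one has $\mu - \mu_j \in [-1,0] \subset [-1,1]$, hence $\rho(\mu-\mu_j) \ge c$. Consequently
\[
c \sum_{\stackrel{\mu \le \sqrt{\lambda_j} \le \mu+1,}{\phi_j \in \L^2_{\sigma_l}(\Gamma_\chi \bsl G)}} |\phi_j(x)|^2 \Big|\sum_{n\le N} z_n \eta_j(n)\Big|^2 \le \sum_{j\ge 0,\, \phi_j \in \L^2_{\sigma_l}(\Gamma_\chi\bsl G)} \rho(\mu-\mu_j)\, |\phi_j(x)|^2 \Big|\sum_{n\le N} z_n \eta_j(n)\Big|^2,
\]
where the inequality is valid because we have only dropped nonnegative terms (those with $\mu_j \notin [\mu,\mu+1]$) and bounded $\rho$ from below on the retained ones. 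The right-hand side is exactly what is estimated in \eqref{eq:14.02.2017}, giving the bound $\ll_\eps N^\eps \mu^\eps [\mu \sum_{n\le N}|z_n|^2 + N\mu^{1/2}(\sum_{n\le N}|z_n|)^2]$. Dividing through by the absolute constant $c$ (which is harmless for the $\ll_\eps$ notation) yields the assertion.

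**Remaining bookkeeping.** I would also address uniformity: the constant $c$ depends only on the fixed choice of $\rho$, not on $x$, $\mu$, $N$, $\sigma_l$ or $\chi$, so the bound is uniform in $x \in \Gamma_\chi\bsl G$ exactly as \eqref{eq:14.02.2017} is; and the contributions of order $O(n^{1+\eps}\mu^{-\infty})$ swept under the rug earlier (to be reinstated only when $n\ll\mu^A$ is fixed later) play no role here since \eqref{eq:14.02.2017} already incorporates that convention.

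**Main obstacle.** Honestly there is no real obstacle — this is a soft positivity argument, and the entire analytic content sits in Theorem~\ref{thm:23.11.2016} and its amplified consequence \eqref{eq:14.02.2017}, which in turn rest on Proposition~\ref{thm:15.09.2016} and the Iwaniec--Sarnak lattice point count (Lemma~\ref{lem:05.09.2016}). The only point requiring a modicum of care is making sure $\rho$ was chosen strictly positive on all of $[-1,1]$ (not merely at $0$) so that the lower bound $\rho(\mu-\mu_j)\ge c$ holds for the full window $\mu_j \in [\mu,\mu+1]$; this was secured by the remark preceding \eqref{eq:14.02.2017}, so the proof is complete.
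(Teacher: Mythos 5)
Your proposal is correct and follows exactly the paper's own argument: both use the positivity of $\rho$ on $[-1,1]$ (arranged via Duistermaat--Guillemin) to bound the sharp-cutoff sum by $c^{-1}$ times the smoothed sum in \eqref{eq:14.02.2017}, observing that all summands are nonnegative so dropping terms is safe. The self-correction that $\mu_j=\sqrt[m]{\lambda_j}=\sqrt{\lambda_j}$ here (since $Q=\sqrt{\Delta}$) is accurate, and no step deviates from the paper.
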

\begin{proof} As explained at the end of Section \ref{sec:FIO}, the test function $\rho \in \S(\R,\R^+)$  can be chosen such that $\rho >0$ on $[-1,1]$. The proposition now follows from \eqref{eq:14.02.2017} and the estimate 
\begin{align*}
\sum_{\stackrel{\mu \leq {\sqrt{\lambda_j}}\leq \mu+1,}{ \phi_j \in \L^2_{\sigma_l}(\Gamma_\chi \bsl G)}}  |\phi_j(x)|^2 \Big |\sum_{n \leq N} z_n  \eta_j(n)\Big |^2  &\cdot \min \mklm{\rho(\mu) \mid \mu \in [-1,1]} \\ &\leq  
\sum_{j \geq 0, \, \phi_j \in \L^2_{\sigma_l}(\Gamma_\chi \bsl G)} \rho(\mu-\mu_j) |\phi_j(x)|^2 \Big |\sum_{n \leq N} z_n  \eta_j(n)\Big |^2.
\end{align*}
\end{proof}

\medskip 

Next, one proceeds as follows. Let $j_0\geq 0 $ be fixed such that $\phi_{j_0} \in \L^2_{{\sigma_l},\chi}(\Gamma \bsl G)$, and consider the \emph{amplifier}
\bq
\label{eq:27.5.2017a} 
z_n:= \begin{cases} \eta_{j_0}(p), &n=p \leq \sqrt N, \\ -1 & n=p^2 \leq N, \\ 0, & \text{otherwise}, \end{cases}
\eq
where $p$ is a prime not dividing $q$.
Note that  \eqref{diagram} and \eqref{eq:08.09.2016a} imply
\[
\eta_j(p)^2-\eta_j(p^2)=\begin{cases} 1 & \text{if $\phi_j \in \L^2_\chi(\Gamma \bsl G)$,} \\ 0 & \text{otherwise.} \end{cases}
\]
Hence, 
\bqn 
\Big |\sum_{n \leq N} z_n  \eta_{j_0}(n) \Big |= \sum_{p \leq \sqrt N, p \not \hspace{0.5mm}|\, q} 1= O\Big(\sqrt N/ \log N^{1/2}\Big), 
\eqn
by the Prime Number Theorem. Writing  $\lambda_j=1/4 +r_j^2$ and taking $\mu=r_{j_0}$ Proposition \ref{thm:08.09.2016} then gives 
\begin{align}
\label{eq:27.5.2017b}
|\phi_{j_0}(x)|^2 \ll_\eps   N^{\eps-1} r_{j_0}^\eps \left [ r_{j_0} \Big ( \sum_{p \leq \sqrt N} |\eta_{j_0}(p)|^2 + \sqrt N \Big ) + N r_{j_0}^{1/2} \Big ( \sum_{p\leq \sqrt N} |\eta_{j_0}(p)| +N^{1/2} \Big )^2 \right ].
\end{align}
As a next step, note that Jacquet-Langlands correspondence \cite{KR} and the study of Rankin-Selberg convolutions (\cite[Theorem 8.3]{iwaniec02} and \cite[Proposition 19.6]{Duke-Friedlander-Iwaniec}) imply  for any $j\in \N$ with $\phi_j \in \L^2_{\chi}(\Gamma \bsl G)$  the bound
\bq 
\label{eq:heckebound}
\sum_{n \leq N} |\eta_j(n)|^2 \ll_\eps N^{1+\eps} r_j^\eps, 
\eq
where $n$ moves over natural numbers prime to $q$. Here we used the facts that the Strong Multiplicity One Theorem holds for $\GL(2)$ and each automorphic representation factors as a tensor product of local representations.
Consequently, with \eqref{eq:heckebound} and Cauchy's inequality one deduces
\bqn 
|\phi_{j_0}(x)|^2 \ll_\eps    N^\eps r_{j_0}^\eps (r_{j_0} N^{-1/2}+ r_{j_0}^{1/2}N ).
\eqn
Choosing $N=r_{j_0}^{1/3}$ finally gives
\bqn 
|\phi_{j_0}(x)|^2 \ll_\eps   \, r_{j_0}^{\frac {5}{6}+\eps}
\eqn
uniformly in $x \in \Gamma_\chi \bsl G$. Thus, we have shown our first main result.
\begin{thm}
Let $G=\SL(2,\R)$ and $\Gamma$ be a congruence arithmetic lattice in $G$. {Let further  $\chi \in \widehat \Gamma$ be a Nebentypus character. Then, for any Hecke--Maass form $\phi_j \in \L^2_{\chi}(\Gamma\bsl G)$ with $\Delta \phi_j =\lambda_j \phi_j$  and $\norm{\phi_j}_{\L^2}=1$ one has
\bqn 
\norm{\phi_j}_\infty \ll_\eps     \lambda_j^{\frac{5}{24}+\eps}
\eqn
for any $\eps>0$. }
\label{thm:24.11.2016}
\end{thm}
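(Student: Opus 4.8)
The plan is to run an arithmetic amplification argument in the spirit of Iwaniec--Sarnak, using the Fourier integral operator asymptotics for the spectral projection kernel from Sections~\ref{sec:FIO} and~\ref{sec:4} in place of the classical asymptotics for spherical functions. I would first specialize Proposition~\ref{thm:15.09.2016} to $M=\Gamma_\chi\bsl G$ with $G=\SL(2,\R)$ and $K=\SO(2)$: here $d=3$, $\dim K=1$, $K=T$ is a torus with one-dimensional $K$-types $\sigma_l$, and the cosphere bundle is round because $P_0=\Delta$, so the strict convexity hypothesis is automatic. Using that $u(z,w)$ is comparable to the squared hyperbolic distance on $\Gamma_\chi\bsl\bH$, the toric form of Proposition~\ref{thm:15.09.2016} gives, for $|l|\ll\mu/\log\mu$,
\[
K_{\T^\chi_n\circ\tilde s_\mu\circ\Pi_{\sigma_l}}(x,x)\ll\frac{1}{[\Gamma:\Gamma_\chi]}\Big(\mu\,M(x,n,\delta)+\mu^{1/2}\int_\delta^C s^{-1/2}\,dM(s)\Big)
\]
uniformly in $x$, up to $O(|\Gamma_\chi\bsl\cR(n)|\,\mu^{-\infty})$, with $M(x,n,\delta)=\#\{\alpha\in\Gamma_\chi\bsl\cR(n):u(xK,\alpha\cdot xK)<\delta^2\}$.

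Next I would bound the counting function and the Stieltjes integral by the uniform lattice-point estimate of Iwaniec--Sarnak (Lemma~\ref{lem:05.09.2016}). Choosing $\delta=\mu^{-1}$, taking $N(s)=s^{-1/2}$ and integrating by parts, Lemma~\ref{lem:05.09.2016} yields $M(x,n,\mu^{-1})\ll_\eps n^\eps$ and $\int_\delta^C s^{-1/2}\,dM(s)\ll_\eps(\mu^{1/2}+n\log\mu)\,n^\eps$, whence Theorem~\ref{thm:23.11.2016}: $K_{\T^\chi_n\circ\tilde s_\mu\circ\Pi_{\sigma_l}}(x,x)\ll_\eps(\mu+n\mu^{1/2}\log\mu)\,n^\eps$ for $|l|\ll\mu/\log\mu$, uniformly in $x$, up to $O(n^{1+\eps}\mu^{-\infty})$. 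Replacing $\mu$ by $\mu\log\mu$ removes the constraint on $l$ at the cost of an extra factor $\log\mu$; since we will later take $n\ll\mu^A$, the $\mu^{-\infty}$ error is harmless.

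Then comes the amplification. Writing $\eta_j(n)=\lambda_j(n)/\sqrt n$ and combining the Hecke composition rule~\eqref{eq:08.09.2016a} with the kernel identities~\eqref{eq:24.11.2016b} and~\eqref{eq:24.11.2016a}, the weighted sum $\sum_{\phi_j\in\L^2_{\sigma_l}}\rho(\mu-\mu_j)|\phi_j(x)|^2\big|\sum_{n\le N}z_n\eta_j(n)\big|^2$ expands into a linear combination of the kernels $K_{\T^\chi_{nm/d^2}\circ\tilde s_\mu\circ\Pi_{\sigma_l}}(x,x)$; inserting Theorem~\ref{thm:23.11.2016} produces the amplified inequality Proposition~\ref{thm:08.09.2016}. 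I would then use the amplifier~\eqref{eq:27.5.2017a}: for a fixed $\phi_{j_0}\in\L^2_{\sigma_l,\chi}(\Gamma\bsl G)$ put $z_p=\eta_{j_0}(p)$ for primes $p\le\sqrt N$ prime to $q$, $z_{p^2}=-1$ for such $p^2\le N$, and $z_n=0$ otherwise. The relation $\eta_j(p)^2-\eta_j(p^2)=1$ for $\phi_j\in\L^2_\chi(\Gamma\bsl G)$ (and $=0$ otherwise), a consequence of~\eqref{diagram} and~\eqref{eq:08.09.2016a}, together with the prime number theorem, gives $\big|\sum_n z_n\eta_{j_0}(n)\big|\gg\sqrt N/\log N$. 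Feeding this and the Rankin--Selberg/Jacquet--Langlands second-moment bound~\eqref{eq:heckebound}, $\sum_{n\le N}|\eta_j(n)|^2\ll_\eps N^{1+\eps}r_j^\eps$ (with Cauchy--Schwarz to estimate $\sum_{n\le N}|z_n|$), into Proposition~\ref{thm:08.09.2016} with $\lambda_{j_0}=\tfrac14+r_{j_0}^2$ and $\mu=r_{j_0}$, I would obtain $|\phi_{j_0}(x)|^2\ll_\eps N^\eps r_{j_0}^\eps(r_{j_0}N^{-1/2}+r_{j_0}^{1/2}N)$. Optimizing at $N=r_{j_0}^{1/3}$ gives $|\phi_{j_0}(x)|^2\ll_\eps r_{j_0}^{5/6+\eps}$ uniformly in $x\in\Gamma_\chi\bsl G$, i.e.\ $\|\phi_{j_0}\|_\infty\ll_\eps\lambda_{j_0}^{5/24+\eps}$; since every $\phi_j\in\L^2_\chi(\Gamma\bsl G)$ lies in some $\L^2_{\sigma_l,\chi}(\Gamma\bsl G)$ and all implied constants are independent of $l$, this is the claim.

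The principal obstacle lies not in this assembly but in Proposition~\ref{thm:15.09.2016}, which I am allowed to assume: the crucial feature is the precise control of the caustic behaviour of $K_{\tilde s_\mu\circ\Pi_\sigma}(x,y)$ in a neighbourhood of the diagonal (Proposition~\ref{prop:30.01.2017}), making the ``far'' part of the pre-trace sum converge with the correct power of $\mu$; this rests on the stationary phase analysis of the FIO representation~\eqref{eq:20.01.2017} and the strict convexity of the cospheres. Granting that, the only genuinely arithmetic input is~\eqref{eq:heckebound}, supplied by transfer to $\GL(2)$ and Rankin--Selberg theory, and the remainder is bookkeeping plus the final optimization in $N$. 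I would also remark that the restriction $|l|\ll\mu/\log\mu$ appearing along the way costs nothing: the equivariant Weyl law forces any $\phi_{j_0}$ with Laplace eigenvalue of size $\mu^2$ to have $K$-type $\sigma_l$ with $|l|\ll\mu$, and the extra $\log$ is absorbed into the $n^\eps$ factor.
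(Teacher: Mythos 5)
Your proposal follows the paper's own proof essentially step for step: the same specialization of Proposition \ref{thm:15.09.2016} to $\Gamma_\chi\bsl\SL(2,\R)$, the same use of Lemma \ref{lem:05.09.2016} to control the Stieltjes integral and obtain Theorem \ref{thm:23.11.2016}, the same amplifier \eqref{eq:27.5.2017a} with the Rankin--Selberg input \eqref{eq:heckebound}, and the same optimization $N=r_{j_0}^{1/3}$. The only quibble is that $M(x,n,\mu^{-1})$ is actually $\ll_\eps(\mu^{-2}+\mu^{-1/2})n^{1+\eps}+n^\eps$ rather than just $n^\eps$, but the extra term contributes $\mu^{1/2}n^{1+\eps}$ to the kernel bound and is absorbed into the stated estimate, so the argument is sound.
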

\qed

\medskip

For trivial $\sigma_l$ and $\chi$, Theorem \ref{thm:24.11.2016} is due to Iwaniec-Sarnak{ \cite[Theorem 0.1]{iwaniec-sarnak95}}. In fact, their method 
can also be used for non-trivial $\chi$, as was done in \cite[Section 10]{BH} for non-compact arithmetic surfaces. Thus, we recover

\begin{cor}[\bf Iwaniec-Sarnak]
For any Hecke--Maass form $\phi_j \in \L^2_\chi(\Gamma \bsl \mathbb{H})$  with $\norm{\phi_j}_{\L^2}=1$ and Beltrami--Laplace eigenvalue $\lambda_j$ one has
\bqn 
\norm{\phi_j}_\infty \ll_\eps \lambda_j^{\frac5{24}+\eps}
\eqn
for any $\eps>0$. 
\label{thm:iwaniec-sarnak95}
\end{cor}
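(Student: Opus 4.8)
The final statement, Corollary~\ref{thm:iwaniec-sarnak95}, is essentially a restatement of Theorem~\ref{thm:24.11.2016} in the spherical case $\sigma_l = \sigma_0$ trivial, transferred from the language of functions on $\Gamma_\chi \bsl G$ to functions on $\Gamma_\chi \bsl \bH$. My plan is therefore to observe that the only work needed is the identification of spaces: a Hecke--Maass form $\phi_j \in \L^2_\chi(\Gamma \bsl \bH)$ with $\norm{\phi_j}_{\L^2}=1$ corresponds canonically to an element of the trivial $K$-type component $\L^2_{\sigma_0,\chi}(\Gamma \bsl G)$, since $\L^2_{\sigma_0,\chi}(\Gamma \bsl G) \cong \L^2_\chi(\Gamma \bsl G/K) \cong \L^2_\chi(\Gamma \bsl \bH)$ as noted right after \eqref{eq:4.1} and again in Section~\ref{sec:Equivariant subconvex bounds}. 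Under this identification the Beltrami--Laplace operator $\Delta$ on $G$ restricted to the $\sigma_0$-isotypic component agrees with the hyperbolic Laplacian on $\bH$ (on $\L^2_{\sigma_l,\chi}$ one has $\Delta = -\Ccal + \tfrac{l^2}{4}\id$, which for $l=0$ is just $-\Ccal$, the Casimir giving the Laplace--Beltrami operator on the symmetric space), and the Hecke operators $\T^\chi_n$ on $\L^2(\Gamma_\chi \bsl G)$ restrict to the classical Hecke operators on $\L^2_\chi(\Gamma \bsl \bH)$. The sup-norm on $\Gamma_\chi \bsl \bH$ is dominated by the sup-norm of the corresponding right-$K$-invariant function on $\Gamma_\chi \bsl G$ (indeed they are equal, as the function is constant along $K$-orbits and $\dist$ agrees with $\dist_\bH$ on $G/K$).

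With that dictionary in hand, the proof is one line: apply Theorem~\ref{thm:24.11.2016} to the form $\phi_j$, now viewed as an element of $\L^2_{\chi}(\Gamma\bsl G)$, to get $\norm{\phi_j}_\infty \ll_\eps \lambda_j^{5/24+\eps}$, and then note that the $\L^2$-normalizations match (the Riemannian submersion $G \to \Gamma_\chi \bsl G$ and the fibration over $G/K$ are set up so that the measures are compatible, cf.\ the discussion of the metric in Section~\ref{sec:hecke}). So concretely I would write: ``This is the special case $\sigma_l = \sigma_0$ trivial of Theorem~\ref{thm:24.11.2016}, using the identification $\L^2_{\sigma_0,\chi}(\Gamma \bsl G)\cong \L^2_\chi(\Gamma\bsl \bH)$ under which $\Delta$ corresponds to the hyperbolic Laplacian, the $\T^\chi_n$ to the classical Hecke operators, and the sup-norms coincide.''

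If a self-contained argument is wanted rather than a pointer, I would run the same amplification machinery directly: Theorem~\ref{thm:23.11.2016} with $l=0$ gives the kernel bound $K_{\T^\chi_n \circ \tilde s_\mu \circ \Pi_{\sigma_0}}(x,x) \ll_\eps (\mu + n\mu^{1/2}\log\mu)n^\eps$; feeding this into Proposition~\ref{thm:08.09.2016} yields the amplified inequality; choosing the Iwaniec--Sarnak amplifier \eqref{eq:27.5.2017a} and invoking the Hecke eigenvalue bound \eqref{eq:heckebound} (valid here by Jacquet--Langlands and Rankin--Selberg) plus the Prime Number Theorem, then optimizing $N = r_{j_0}^{1/3}$ with $\lambda_j = 1/4 + r_j^2$, gives $\norm{\phi_j}_\infty^2 \ll_\eps r_{j_0}^{5/6+\eps} \ll_\eps \lambda_j^{5/12+\eps}$, i.e.\ the claimed $\lambda_j^{5/24+\eps}$. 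The only genuine mathematical content has already been carried out in Sections~\ref{sec:4}--\ref{sec:IS}; what remains is bookkeeping.

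The main (and essentially only) obstacle is purely expository: making the identification $\L^2_\chi(\Gamma\bsl\bH) \cong \L^2_{\sigma_0,\chi}(\Gamma\bsl G)$ precise, including checking that the operators and the $\L^2$- and $\L^\infty$-norms transport correctly, and confirming that ``Hecke--Maass form of rank $1$'' on $\Gamma\bsl G$ with trivial $K$-type is exactly a classical Hecke--Maass form on $\Gamma\bsl\bH$ (compare the discussion in Section~\ref{sec:autrepr} referenced in the introduction). There is no new analytic difficulty; the subconvexity itself is entirely contained in Theorem~\ref{thm:24.11.2016}.
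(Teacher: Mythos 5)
Your proposal is correct and follows essentially the same route as the paper's own proof: identify $\L^2_\chi(\Gamma\bsl\bH)$ with the trivial $K$-type subspace $\L^2_{\sigma_0,\chi}(\Gamma\bsl G)$ and apply Theorem~\ref{thm:24.11.2016}. The paper's proof is terser but records the same key observation you make about the $\L^2$-normalization, namely that all $K$-orbits in $G$ have the same volume so eigenfunctions on $\bH \simeq G/K$ lift to $K$-invariant eigenfunctions on $G$ with compatible norms.
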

\begin{proof}
If $\sigma_l=\id$ is trivial, $\L^2_{\sigma_l,\chi}(\Gamma \bsl G) \simeq \L^2_\chi(\Gamma \bsl \mathbb{H})$, and the assertion follows form the previous theorem. Note that since all $K$-orbits in $G$ have the same volume, each eigenfunction of the Beltrami--Laplace operator on $\bH\simeq G/K$  lifts to a unique $K$-invariant eigenfunction of the Beltrami--Laplace operator on $G$.
\end{proof}

 \medskip

\subsection{Automorphic forms on $\SL(2,\R)$ and representation-theoretic interpretation} 
\label{sec:autrepr}
In what follows, we would like to discuss our results within the  theory of automorphic forms and their  representa\-tion-theoretic meaning.  For this, let us first recall the concept of an automorphic form on $G=\SL(2,\R)$ for a discrete co-compact subgroup $\Gamma$, compare \cite[Section 5]{borel97}.

\begin{definition}
\label{def:autom}
A smooth function $f:G \rightarrow \C$ is called an \emph{automorphic form on $G$ for $\Gamma$} iff:
\begin{enumerate}
\item[(A1)] $f(\gamma g)=f(g)$ for all $\gamma \in \Gamma$ and $g \in G$,
\item[(A2)] $f$ is $K$-finite on the right, where  $K=\SO(2)$,
\item[(A3)] $f$ is $\mathcal{Z}$-finite, where $\mathcal{Z}$ denotes the center of the universal envelopping algebra $\mathfrak{U}(\g_\C)$ of the complexification of $\g$.
\end{enumerate}
\end{definition}

Note that  (A2) means that $f$ is a finite sum of functions $f_l$ belonging to  a specific $K$-type $\sigma_l$, while (A3) is equivalent to the existence of a polynomial $p(\Ccal)$ in the Casimir operator $\Ccal=dR(\Omega)$ that annihilates $f$, the notation being as in Section \ref{sec:hecke}. Now, $\g=\{X\in M(2,\R) \mid \tr(X)=0 \}$, while  a Cartan involution is given by $-\, ^tX$. With respect to the basis
\bqn 
X_1':=\begin{pmatrix} 1& 0 \\ 0 & -1\end{pmatrix}, \qquad X_2':=\begin{pmatrix} 0& 1 \\ 1 & 0 \end{pmatrix}, \qquad Y_1':=\begin{pmatrix} 0& 1 \\ -1 & 0 \end{pmatrix}
\eqn
of $\g=\p\oplus \k$, the modified Killing form $\eklm{\cdot, \cdot}_\theta$ is represented by the matrix
\bqn 
 \frac 18 \begin{pmatrix} 1 & 0 & 0 \\ 0 & 1 & 0 \\ 0 & 0 & 1\end{pmatrix}.
\eqn
Consequently, a corresponding orthonormal basis of $\g$ is given by $X_1:=X_1'/2 \sqrt{2}$, $X_2:=X_1'/2 \sqrt{2}$, $Y_1:=Y_1'/2 \sqrt{2}$, so that the Casimir element reads
\bqn 
\Omega=X_1^2+X_2^2-Y_1^2\equiv \frac 1 8 \id,
\eqn
compare \eqref{eq:casimir}. Note that our normalization of $\Ccal$ differs from the one in \cite[p. 20]{borel97},  where $\Omega\equiv \frac 12 \id$.
  Writing $p(\Ccal)=\prod_i(\Ccal-\mu_i)$ and $\mu_{\sigma_l}=l^2/8$ for the eigenvalue of $dR(\Omega_K)$ on the $\sigma_l$-isotypic component one sees that
\begin{align*}
p(\Ccal)f_l=\prod_i(2dR(\Omega_K)-\Delta-\mu_i)f_l =\prod_i(2\mu_{\sigma_l}-\Delta-\mu_i)f_l =:q_l(\Delta) f_l,
\end{align*}
where we took into account \eqref{eq:casimir-laplace}. Thus, $p(\Ccal) f=0$ iff $q_l(\Delta)f_l=0$ for all $l$, by orthogonality.   Since $q_l(\Delta)$ is an elliptic differential operator of the same order than $p(\Ccal)$, and any subspace defined by a $K$-type and a Casimir eigenvalue is finite dimensional by  Harish-Chandra's theorem \cite[Theorem 1.7]{borel-jacquet}, we see that $f$ is essentially given by a finite sum of  Hecke--Maass forms in the sense of this paper.

To interprete our results in terms of the representation theory of $G$, let us first notice that, since $-I_2$ belongs to $\Gamma$, one has 
\bqn 
\L^2_{\sigma_l,\chi}(\Gamma \bsl G)=\mklm{0} \quad \text{ if } \quad \sigma_l(-I_2)\neq \chi(-I_2).
\eqn
Hence, in case that  $\L^2_{\sigma_l,\chi}(\Gamma \bsl G)\not=\mklm{0}$,  $l$ must be even if $\chi(-I_2)=1$,  and odd otherwise. Now, according to the decomposition \eqref{eq:Gdecomp} the following irreducible unitary representations of $G$ can appear in $\L^2_{\sigma_l,\chi}(\Gamma \bsl G)$, see  \cite[Section 15]{borel97}:

\begin{enumerate}
\item If $l=0$, 

\vspace{1mm}
(a) the trivial representation,

\vspace{1mm}
(b) the unitary principal series $H(0,s)$ with $s\in i\R_{\geq 0}$,

\vspace{1mm}
(c) the complementary series $I(0,s)$ with $s\in (0,1)$

\noindent
can appear.
\item If $l$ is even and $l\neq 0$,

\vspace{1mm}
(a) the discrete series $D_s$ with $s \in \Z-\mklm{0}$, $|s|<|l|$, $\sgn(s)=\sgn(l)$, $s$ odd,

\vspace{1mm}
(b) the unitary principal series $H(0,s)$ with $s\in i\R_{\geq 0}$,

\vspace{1mm}
(c) the complementary series $I(0,s)$ with $s\in (0,1)$

\noindent
can appear. 
\item If $l$ is odd, 

\vspace{1mm}
(a) the discrete series $D_s$ with $s \in \Z-\mklm{0}$, $|s|<|l|$, $\sgn(s)=\sgn(l)$, $s$ even,

\vspace{1mm}
(b) the unitary principal series $H(1,s)$ with $s\in i\R_{> 0}$,

\vspace{1mm}
(c) the limits of discrete series $D_{+,0}$ (resp. $D_{-,0}$) with $l>0$ (resp. $l<0$)

\noindent
can appear.
\end{enumerate}

Note that in each of the above unitary representations the $\sigma_l$-isotypic component is $1$-dimensional. %, compare \cite[p.158]{borel97}.
By the above list, one sees that representations occuring in $\L^2_{\sigma_l,\chi}(\Gamma \bsl G)$ are in general different from the spherical case $\L^2(\Gamma\bsl \bH)$. Hence, Theorem \ref{thm:24.11.2016} implies subconvex bounds for new classes of automorphic representations, in particular for the discrete series $D_s$ and their limits $D_{\pm,0}$, as well as the principal series $H(1,s)$.

To rephrase our results in terms of the spectrum of the Casimir operator $\Ccal$, note that by \eqref{eq:casimir-laplace} we have with $\mu_{\sigma_l}=l^2/8$
\bq
\label{eq:29.8.2017}
\Delta \phi_j =  \lambda_j \phi_j=\big (-\mu_j+{l^2}/4\big ) \phi_j, \qquad \phi_j \in \L^2_{{\sigma_l},\chi}(\Gamma\bsl G), 
\eq
 $\mu_j$ being the Casimir eigenvalue of $\phi_j$. Now, in the cases relevant to us the Casimir eigenvalues can take the following values  \cite[p.\ 158 and p.\ 163]{borel97}:\footnote{Note that our normalization of the Casimir operator differs from the one in \cite{borel97} by a factor $1/4$.}

\begin{enumerate}

\item On $(D_s)^\infty$,  $\mathcal{C}\equiv(s^2-1)\, \id /8$, where $s \in \Z-\mklm{0}$. For $|s|<|l|$ and $\sgn(s)=\sgn(l)$ one has $0<s^2-1<l^2-1$.

\item On $H(0,s)^\infty$ or $H(1,s)^\infty$,  $\mathcal{C}\equiv (s^2-1)\, \id /8$. For $s\in i\R_{\geq 0}$ one has $s^2-1\leq -1$.

\item On $I(0,s)^\infty$,  $\mathcal{C}\equiv (s^2-1)\, \id /8$. For $s\in (0,1)$ one has $-1<s^2-1<0$.

\item On $(D_{+,0})^\infty$ or $(D_{-,0})^\infty$, ${\displaystyle \mathcal{C}\equiv- \id }/8$.

\end{enumerate}
 
Here $H^\infty$ denotes the subspace of differentiable vectors in a Hilbert representation $H$. Consequently, the subconvex bound in Theorem \ref{thm:24.11.2016} can be restated as follows.

\begin{thm}
\label{thm:28.8.2017}
Let $G=\SL(2,\R)$, $K=\SO(2)$, and $\Gamma$ be a congruence arithmetic lattice in $G$. Let further ${\sigma_l} \in \widehat K$ and  $\chi \in \widehat \Gamma$ be a Nebentypus character. {Then,  for any Hecke eigenform $\phi \in \L^2_{{\sigma_l},\chi}(\Gamma\bsl G)$ satisfying $\norm{\phi}_{\L^2}=1$ and  $\Ccal \phi = \frac{s^2-1}8\phi$ one has 
 \bqn 
\norm{\phi}_\infty \ll_\eps     ( 1-s^2+2l^2 )^{\frac{5}{24}+\eps} 
\eqn}
%or, 
% \bqn 
%\norm{\phi}_\infty \ll_\eps   \Big ( -\frac{s^2-1}2+2l^2 \Big )^{\frac{11}{24}+\eps}
%\eqn
for any $\eps>0$.
\end{thm}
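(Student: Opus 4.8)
The plan is to reduce Theorem~\ref{thm:28.8.2017} to Theorem~\ref{thm:24.11.2016} by translating the Casimir eigenvalue of $\phi$ into the Beltrami--Laplace eigenvalue of $\phi$, and then bounding the latter in terms of $1-s^2+2l^2$. Recall that $\phi \in \L^2_{\sigma_l,\chi}(\Gamma\bsl G)$ lies in the $\sigma_l$-isotypic component, so by \eqref{eq:casimir-laplace} and the fact that $dR(\Omega_K)$ acts as $\mu_{\sigma_l}=l^2/8$ on this component, we have
\[
\Delta\phi = -\Ccal\phi + 2dR(\Omega_K)\phi = \Big(-\frac{s^2-1}{8} + \frac{l^2}{4}\Big)\phi = \frac{1-s^2+2l^2}{8}\,\phi,
\]
so the Beltrami--Laplace eigenvalue is $\lambda_\phi = \tfrac18(1-s^2+2l^2)$. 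Since $\phi$ is also an eigenfunction of the Hecke operators $\T^\chi_n$ (it is a Hecke eigenform) and lies in $\L^2_\chi(\Gamma\bsl G)$, it is a Hecke--Maass form of rank $1$ in the sense used in Theorem~\ref{thm:24.11.2016}, which then gives
\[
\norm{\phi}_\infty \ll_\eps \lambda_\phi^{\frac{5}{24}+\eps} = \Big(\frac{1-s^2+2l^2}{8}\Big)^{\frac{5}{24}+\eps} \ll_\eps (1-s^2+2l^2)^{\frac{5}{24}+\eps},
\]
absorbing the constant $8^{-(5/24+\eps)}$ into the implied constant.

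The one genuine point requiring care is positivity: the exponentiation $\lambda_\phi^{5/24+\eps}$ only makes sense, and the bound is only meaningful, when $\lambda_\phi \geq 0$, i.e.\ when $1-s^2+2l^2 > 0$. First I would verify this using the list of possible Casimir eigenvalues recorded just before the statement. In every case occurring in $\L^2_{\sigma_l,\chi}(\Gamma\bsl G)$ one has $s^2-1 < l^2-1$ (for the discrete series $D_s$ with $|s|<|l|$), or $s^2-1 \leq -1 < l^2$ (unitary principal series), or $-1<s^2-1<0$ (complementary series), or $s^2-1 = -1$ (limits of discrete series, and also the trivial representation contributes $s=0$, $l=0$). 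In all these cases $s^2-1 < l^2 \leq 2l^2$ when $l\neq 0$, and $s^2 - 1 \leq 0 < 1$ when $l = 0$; hence $1-s^2+2l^2 \geq 1-s^2 > 0$ in the spherical-type cases and $1-s^2+2l^2 > 1-s^2+l^2 > 0$ otherwise. So $\lambda_\phi$ is a strictly positive real number, as it must be since $\Delta$ is a positive operator on the compact manifold $\Gamma_\chi\bsl G$ (cf.\ the remark after \eqref{eq:Gdecomp} that the eigenvalues of $\Delta$ are positive); this makes the positivity check essentially automatic, but it is worth spelling out to justify that $1-s^2+2l^2$ is the natural positive quantity replacing $\lambda_\phi$.

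The remaining step is purely bookkeeping: one should confirm that the normalization of $\Ccal$ used here (with $\Omega \equiv \tfrac18\id$ on trivial representations, as fixed in Section~\ref{sec:autrepr}) is the same one under which the eigenvalue list $\Ccal \equiv (s^2-1)\id/8$ was stated, and that $\mu_{\sigma_l} = l^2/8$ is consistent with the same normalization — both were established in the discussion preceding the theorem, so no new computation is needed. I do not anticipate any real obstacle here: the theorem is a restatement, and the only thing that could go wrong is a factor-of-$4$ or sign slip in relating $\Ccal$, $\Delta$, and $dR(\Omega_K)$, which is controlled by \eqref{eq:casimir-laplace} and the explicit constants in Section~\ref{sec:autrepr}. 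Thus the proof consists of: (i) apply \eqref{eq:casimir-laplace} on the $\sigma_l$-isotypic component to get $\lambda_\phi = \tfrac18(1-s^2+2l^2)$; (ii) check $\lambda_\phi > 0$ from the eigenvalue list (or from positivity of $\Delta$); (iii) invoke Theorem~\ref{thm:24.11.2016}; (iv) absorb constants.
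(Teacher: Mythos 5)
Your proposal is correct and follows exactly the paper's own route: Theorem \ref{thm:28.8.2017} is obtained as a restatement of Theorem \ref{thm:24.11.2016} via the relation \eqref{eq:casimir-laplace} on the $\sigma_l$-isotypic component, i.e. $\lambda_\phi=-\mu_\phi+l^2/4=\tfrac18(1-s^2+2l^2)$ with $\mu_{\sigma_l}=l^2/8$, exactly as in \eqref{eq:29.8.2017}. Your positivity check via the eigenvalue list is a sensible extra verification that the paper leaves implicit.
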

\qed

%\footnote{\bf Would it be of interest to state our results also for general automorphic forms satisfying $p(\Ccal) \phi=0$ for some polynomial $p$? Satoshi: Yes.}

Classically, an \emph{automorphic form of weight $l\in \N$ and Nebentypus character $\chi$}  was first introduced as a holomorphic function  $f:\bH \rightarrow \C$ satisfying
\bqn 
f(\gamma \cdot z)= \chi(\gamma)\, j(\gamma,z)^l \, f(z), \qquad \gamma \in \Gamma, \, z \in \bH,
\eqn
where  $j(g,z)$ is as in \eqref{eq:j(g,z)}. Its lift $\tilde f(g):=f(g\cdot i) j(g,i)^{-l}$ constitutes an automorphic form on $G$ in the sense of Definition \ref{def:autom}; it  is  of $K$-type $\sigma_l$ and satisfies $\Ccal \tilde f=\frac 14 (l^2/2-l) \tilde f$, see  \cite[Sections 5.14 and 5.15]{borel97}.
In particular, if $l>1$,  $\tilde f$ belongs to the discrete series representation $D_{l-1}$ in $\L^2_{{\sigma_l},\chi}(\Gamma\bsl G)$. 
If, in addition,  $\tilde f$ is a  Hecke eigenform with $\|\tilde f\|_2=1$, one deduces from Theorem \ref{thm:28.8.2017} 
{
\bq\label{eq:3.9.2018}
\|{ f}\|_\infty \ll_\eps l^{\frac{5}{12}+\eps},
\eq
since  $\norm{f}_p\equiv \|\tilde f\|_p$ for all $p$, compare \cite[p. 219]{Miyake}, yielding subconvex bounds for classical automorphic forms on $\bH$ in the weight aspect.  This is consistent with  Godement's formula \cite{Godement}, by which one has the convex bound $\norm{f}_\infty \ll l^{\frac12}$, see \cite{DS, CL}.}
Furthermore, a corresponding subconvex bound was proven in \cite{DS}, the exponent there being $\frac{1}{2}-\frac{1}{33}=\frac{31}{66}$. Thus, our results {do}  imply new results about holomorphic modular forms on $\bH$.  Note that in  the case $\Gamma=\SL(2,\Z)$ one can even   show  \cite{Xia} that $l^{\frac{1}{4}-\eps}\ll_\eps \norm{f}_\infty \ll_\eps l^{\frac{1}{4}+\eps}$ by using  the Fourier expansion of $f$ and Deligne's bound \cite{Deligne}, though this method is not available for  cocompact arithmetic subgroups.
In the non-cocompact case, hybrid bounds in the eigenvalue and the level aspect were considered in \cite{BH}.

\section{Subconvex bounds on $\SO(3)$}
\label{sec:SO(3)}

In this section, we shall derive equivariant and non-equivariant subconvex bounds on $\SO(3)$ in the setting of \cite{LPS1,LPS2}. They are proven in an analogous way than the ones proven in Section \ref{sec:IS} using results of  \cite{VanderKam}. To begin, consider the  quaternion algebra
\[
H(R):=\mklm{a_0+a_1\bi+a_2\bj+a_3\bk\mid a_0,a_1,a_2,a_3\in R  }
\]
over a given commutative ring $R$, where $\bi^2=\bj^2=-1 , \,\bi\bj=-\bj\bi=\bk$, and recall that for an element $a:=a_0+a_1\bi+a_2\bj+a_3\bk\in H(R)$ its conjugate   is given by $\overline{a}:=a_0-a_1\bi-a_2\bj-a_3\bk\in H(R)$ while its norm reads $N(a):=a\overline{a}=a_0^2+a_1^2+a_2^2+a_3^2$.
Note that  $H(\R)$ corresponds to the field of Hamilton's quaternions and $H(\Z)$ to the ring of Lipschitz integers \cite{CS}. Write $H(R)^1:=\{a\in H(R)\mid N(a)=1\}$ and put $G:=H(\R)^1$. As a  group $G$, can be identified with $\SU(2)$ via the mapping
\[
G\ni a_0+a_1\bi+a_2\bj+a_3\bk \quad \longmapsto  \quad \begin{pmatrix}a_0+a_1i & a_2+a_3i \\ -a_2+a_3i& a_0-a_1i \end{pmatrix}\in \SU(2).
\]
$G$  is compact,  while $H(\Z)^1=\{\pm 1,\;\; \pm \bi,\;\; \pm \bj,\;\; \pm \bk \}$ is finite, so that by  choosing the lattice
$\Gamma :=  \{\pm 1\}$ in $G$ 
 we have $$\Gamma\bsl G\cong \SO(3)$$ via the adjoint action of $G$ on its Lie algebra. Next, we introduce  Hecke operators on $\SO(3)$ following \cite{LPS1,LPS2}. Thus, for each $\alpha\in H(\R)\setminus \{0\}$ and $x\in G$ set
\[
\alpha\cdot x:= N(\alpha)^{-1/2}\alpha x \in G.
\]
As in  Section \ref{sec:hecke}, one can associate to each double coset $\Gamma \alpha \Gamma$, $\alpha\in H(\Q)$, a Hecke operator $T_{\Gamma\alpha\Gamma}$. 
%  according to 
%\[
%(T_{\Gamma\alpha\Gamma} f)(x):=f(\alpha x), \qquad f\in \L^2(\Gamma\bsl G).
%\]
We then have $T_{\Gamma\alpha_1\Gamma}\circ T_{\Gamma\alpha_2\Gamma}=T_{\Gamma\alpha_2\alpha_1\Gamma}$. Further, setting \footnote{$a_0+a_1\bi+a_2\bj+a_3\bk\equiv 1 \; (2)$ means that $a_0$ is odd and $a_1,a_2,a_3$ are even. Note that $\mathcal{R}(n)$ is empty unless $n\equiv 1\mod 4$.}
\[
\mathcal{R}(n):=\{ a\in H(\Z) \mid N(a)=n ,\quad a\equiv 1 \mod 2 \},
\]
one can define the Hecke operator
\[
(\T_nf)(x):=\frac{1}{2}\sum_{\alpha\in \mathcal{R}(n)}f(\alpha\cdot x), \qquad f\in \L^2(\Gamma\bsl G).
\]
For natural numbers $r$, $s\equiv 1\mod 4$, one has $\T_r \T_s=\sum_{d|(r,s)}d \, \T_{rs/d^2}$, see \cite[Remark 1]{LPS2} and \cite[p. 331]{VanderKam}. Since Hecke operators commute with the right regular representation of $G$ on $\L^2(\Gamma\bsl G)$, we may replace $\L^2(\Gamma\bsl G)$ by $\L^2(\Gamma\bsl G/K)$ for any subgroup $K$ of $G$ in the above argument on Hecke operators. Choose $K=\SO(2)\simeq \C^1$, and  denote the corresponding characters by $\sigma_l:e^{i\theta} \mapsto e^{il\theta}$, $l \in \Z$.
Let $\Delta$ denote the Beltrami--Laplace operator on $G$.
Since $\Delta$ and $\T_n$ commute, there exists  an orthonormal basis $\mklm{\phi_j}_{j\geq 0}$ of $\L^2(\Gamma\bsl G)$ consisting of simultaneous eigenfunctions compatible with the decompositions \eqref{eq:directsum} and  \eqref{eq:PW}, where $X=G$ and $M=\Gamma \bsl G$, respectively. Further, note that  the action of $K$ on $\Gamma \bsl G$ is  isometric and non-singular. We then can prove the following equivariant subconvex bounds. 

\begin{thm}\label{thm:SO(3)equiv}
Let $G=\SU(2)$, $K=\SO(2)$, $\Gamma=\{\pm 1\}$, and $\sigma_l\in\widehat{K}\simeq \Z$. Then, 
for any Hecke--Maass form  {$\phi_j\in \L_{\sigma_l}^2(\Gamma\bsl G)$
%\footnote{\bf Satoshi: I added $\sigma_l$. In Theorem 5.6, the Laplace eigenvalue determines the $K$-type of $\phi_j$ by the strong multiplicity theorem. However, in this case, the dimensions of eigenspaces are increasing with respect to eigenvalues. So, we must restrict $\dim\langle k\cdot \phi_j \mid k\in K\rangle$. Is my opinion correct? Pablo: I will consider it tomorrow...} 
with  Beltrami--Laplace eigenvalue $\lambda_j$ and $\norm{\phi_j}_{\L^2}=1$ one has
\[
\norm{\phi_j}_\infty \ll_\eps \, \lambda_j^{\frac{5}{24}+\eps},
\]}
$\eps>0$ being arbitrary.
\end{thm}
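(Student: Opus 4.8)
The plan is to run the argument of Section~\ref{sec:IS} almost verbatim, with the compact quotient $M=\Gamma\bsl G\cong\SO(3)$ in place of $\Gamma_\chi\bsl\SL(2,\R)$, the Beltrami--Laplace operator $P_0=\Delta$ of degree $m=2$ as the elliptic operator, and the Hecke operators $\T_n$ of Section~\ref{sec:SO(3)} (with trivial character) as the $\T^\chi_n$. Here $d=\dim M=3$ and $K=\SO(2)$ is a torus of dimension $\dim K=1$, so $d-\dim K-1=1$, while $G/K\cong S^2$. The principal symbol of $\Delta$ is $p(x,\xi)=\norm{\xi}_x^2$, so the cospheres $S^\ast_xM$ are round spheres and in particular strictly convex; moreover $\Delta$ is positive and symmetric, it commutes with the right regular representation of $K$ and with every $\T_n$, and it has discrete spectrum on $M$. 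First I would therefore invoke Proposition~\ref{thm:15.09.2016}: since $K=T$ is a torus and each $\sigma_l$ is one-dimensional, it yields for $\sigma_l$ with $\modulus{l}\ll\mu/\log\mu$ and any $0<\delta\ll1$
\[
K_{\T_n\circ\tilde s_\mu\circ\Pi_{\sigma_l}}(x,x)\ll \mu\,M(x,n,\delta)+\mu^{1/2}\int_\delta^C s^{-1/2}\,dM(s)
\]
uniformly in $x\in\Gamma\bsl G$, up to terms of order $O(\modulus{\Gamma\bsl\mathcal{R}(n)}\,\mu^{-\infty})$, where $M(\delta)=M(x,n,\delta):=\#\mklm{\alpha\in\Gamma\bsl\mathcal{R}(n):\dist(xK,\alpha\cdot xK)<\delta}$, the distance being taken on $G/K\cong S^2$.

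Next I would feed in the arithmetic lattice-point count of VanderKam~\cite{VanderKam}, which is the $S^2$-analogue of Lemma~\ref{lem:05.09.2016}: for every $\eps>0$,
\[
\#\mklm{\alpha\in\mathcal{R}(n):\dist(xK,\alpha\cdot xK)<\delta}\ll_\eps(\delta^2+\delta^{1/2})\,n^{1+\eps}+n^\eps
\]
uniformly in $x$ (equivalently, phrased through a point-pair invariant on $S^2$ comparable to $\dist^2$, as in \eqref{eq:31.08.2016}). Taking $\delta=\mu^{-1}$ and estimating the Stieltjes integral with $N(s):=s^{-1/2}$ exactly as in the computation preceding Theorem~\ref{thm:23.11.2016} gives $\int_\delta^C s^{-1/2}\,dM(s)\ll_\eps(\mu^{1/2}+n\log\mu)\,n^\eps$, hence the uniform bound
\[
K_{\T_n\circ\tilde s_\mu\circ\Pi_{\sigma_l}}(x,x)\ll_\eps(\mu+n\mu^{1/2}\log\mu)\,n^\eps,\qquad\modulus{l}\ll\mu/\log\mu,
\]
up to $O(n^{1+\eps}\mu^{-\infty})$ --- the $\SO(3)$-analogue of Theorem~\ref{thm:23.11.2016}. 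Replacing $\mu$ by $\mu\log\mu$ then removes the restriction on $l$ at the cost of a factor $\log\mu$, so that $K_{\T_n\circ\tilde s_\mu\circ\Pi_{\sigma_l}}(x,x)\ll_\eps\log\mu\,(\mu+n\mu^{1/2}\log\mu)\,n^\eps$ for every $\sigma_l\in\widehat K$.

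From here I would transcribe Section~\ref{sec:Equivariant subconvex bounds}. Using the spectral expansion \eqref{eq:24.11.2016b}, the Hecke relation $\T_r\T_s=\sum_{d\mid(r,s)}d\,\T_{rs/d^2}$ for $r,s\equiv1\bmod4$, and writing $\eta_j(n):=\lambda_j(n)/\sqrt n$, one obtains as in Proposition~\ref{thm:08.09.2016}
\[
\sum_{\substack{\mu\le\sqrt{\lambda_j}\le\mu+1\\ \phi_j\in\L^2_{\sigma_l}(\Gamma\bsl G)}}\modulus{\phi_j(x)}^2\Bigl\lvert\sum_{n\le N}z_n\eta_j(n)\Bigr\rvert^2\ll_\eps N^\eps\mu^\eps\Bigl[\mu\sum_{n\le N}\modulus{z_n}^2+N\mu^{1/2}\Bigl(\sum_{n\le N}\modulus{z_n}\Bigr)^2\Bigr]
\]
uniformly in $x$, for $z_n\in\C$ supported on $n\equiv1\bmod4$. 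Taking $j_0$ with $\phi_{j_0}\in\L^2_{\sigma_l}(\Gamma\bsl G)$, choosing the amplifier \eqref{eq:27.5.2017a} with $p$ ranging over primes $\equiv1\bmod4$ (so $\mathcal{R}(p)\neq\emptyset$), and using $\eta_j(p)^2-\eta_j(p^2)=1$ (from $\T_p^2=\T_{p^2}+p\,\id$) together with the Prime Number Theorem in arithmetic progressions gives $\bigl\lvert\sum_n z_n\eta_{j_0}(n)\bigr\rvert\gg\sqrt N/\log N$. The remaining input is the Hecke bound $\sum_{n\le N}\modulus{\eta_j(n)}^2\ll_\eps N^{1+\eps}\lambda_j^\eps$, which holds by the Jacquet--Langlands correspondence for the definite quaternion algebra $H(\Q)$ (ramified at $2$ and $\infty$) and Rankin--Selberg theory for $\GL(2)$, exactly as in \eqref{eq:heckebound} and as used in \cite{VanderKam}. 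Inserting everything, taking $\mu=\sqrt{\lambda_{j_0}}$ and optimizing with $N=\mu^{1/3}=\lambda_{j_0}^{1/6}$ yields $\modulus{\phi_{j_0}(x)}^2\ll_\eps\mu^{5/6+\eps}=\lambda_{j_0}^{5/12+\eps}$ uniformly in $x$, i.e.\ $\norm{\phi_{j_0}}_\infty\ll_\eps\lambda_{j_0}^{5/24+\eps}$.

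The only genuinely new ingredient beyond VanderKam's treatment of the case $l=0$ (where $\L^2_{\sigma_0}(\Gamma\bsl G)\cong\L^2(S^2)$) is the passage to non-trivial $K$-types, which is precisely what the Fourier integral operator input behind Proposition~\ref{thm:15.09.2016} supplies via the toric case of Proposition~\ref{prop:30.01.2017}: the off-diagonal kernel asymptotics, hence the counting estimate above, are uniform in $\sigma_l$ for $\modulus{l}\ll\mu/\log\mu$, and the substitution $\mu\mapsto\mu\log\mu$ upgrades this to all $l$. I expect the main work to lie not in the amplification step --- a routine transcription of \cite{iwaniec-sarnak95} and \cite{VanderKam} --- but in verifying that $M=\SO(3)$ with $P_0=\Delta$ and the right $\SO(2)$-action satisfies all hypotheses of Propositions~\ref{prop:30.01.2017} and \ref{thm:15.09.2016}: an effective isometric torus action with orbits of constant dimension, a positive symmetric elliptic $P_0$ commuting with $K$ and with the Hecke algebra, strictly convex cospheres, and the correct tracking of the caustic behaviour of $K_{\tilde s_\mu\circ\Pi_{\sigma_l}}(x,y)$ near the diagonal.
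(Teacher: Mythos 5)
Your proposal is correct and follows essentially the same route as the paper's proof: the toric case of Proposition \ref{thm:15.09.2016} for arbitrary $K$-types, VanderKam's lattice-point count on $S^2$, and the Iwaniec--Sarnak amplification with $N=\mu^{1/3}$. The only (harmless) deviations are that you quote a slightly weakened form of VanderKam's Lemma 2.1 (his case-split bound $\delta^{1/2}n^{1+\eps}+n^\eps$ for $\delta<1/n$, resp.\ $n^{1/2+\eps}+\delta^{2/3}n^{1+\eps}$ otherwise, does imply your $(\delta^2+\delta^{1/2})n^{1+\eps}+n^\eps$ for $\delta\leq 1$, and the Stieltjes computation gives the same outcome), and that you use the Rankin--Selberg average bound for the Hecke eigenvalues where the paper invokes Deligne's pointwise bound via Jacquet--Langlands; both inputs yield the required estimate.
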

\begin{rem}
This theorem  is a generalization of \cite[Theorem 1.1]{VanderKam}, where the case  $\L^2_{\sigma_0}(\Gamma\bsl G)\cong \L^2(\Gamma \bsl G/K) \cong \L^2(S^2)$ is treated, $S^2$ being the $2$-sphere. 
In the papers \cite{BMa,BMb}, hybrid $\L^\infty$-norms for general arithmetic quotients of $2$-spheres  in the eigenvalue and  level aspect are studied.
However, the exponents for the spectral parameter are a little greater than $5/24$ there, namely  $\frac{1}{24}-\frac{1}{27}=\frac{1}{216}$.
\end{rem}
\begin{proof}
By \cite[Lemma 2.1]{VanderKam} one has 
\bq \label{VanderKam}
\sharp\{\alpha\in\mathcal{R}(n)\mid \text{dist}(x,\alpha\cdot x)<\delta\}\ll_\eps \begin{cases} \delta^{\frac{1}{2}}n^{1+\eps}+n^\eps & \text{if $\delta<1/n$,} \\ n^{\frac{1}{2}+\eps}+\delta^{\frac{2}{3}}n^{1+\eps} & \text{otherwise.} \end{cases}
\eq
Further,  Proposition \ref{thm:15.09.2016} also holds in the present case,  %$\Gamma \bsl G \cong \SO(3)$ and  $K=\SO(2)$
since  Proposition \ref{prop:30.01.2017} is true  for arbitrary compact manifolds and symmetry groups. By repeating the arguments given in Section \ref{sec:Equivariant subconvex bounds} we therefore get {for any  $|l| \ll \mu /\log \mu$,  $\mu>n$, and $n\equiv 1\mod 4$ the uniform bound
\[
K_{\T_n \circ \widetilde s_\mu \circ \Pi_{\sigma_l} }(x,x) \ll_\eps \,  (\mu+ n \mu^{1/2}\log\mu) \, n^\eps
\]
up to neglegible terms.} Now, by  the Dirichlet prime number theorem on arithmetic progressions it is well-known that
\bqn
\#\mklm{ p<x \mid \text{$p$ is a prime}, \, p\equiv 1\mod 4 } \; \sim \;  \frac{1}{2}\frac{x}{\log x}. 
\eqn
Furthermore,  the Ramanujan conjecture proved by Deligne \cite{Deligne} together with  the Jacquet-Langlands correspondence for $\GL(2)$ \cite{KR} implies that the Hecke eigenvalues $\lambda_j(p)$ of $\T_p$ are bounded from above by $2p^{\frac{1}{2}+\eps}$  for prime levels. Hence,  the argument of Iwaniec-Sarnak  already used  in Section \ref{sec:Equivariant subconvex bounds}, but now applied to  {$\L^2(\SO(3))$,    yields 
\[
\norm{\phi_j}_\infty^2 \ll_\eps \, \mu^\eps N^\eps (\mu N^{-1/2}+ \mu^{1/2}N).
\]}
The theorem now follows by taking $N= \mu^{1/3}$.
\end{proof}

Note that the right regular representation of $\Gamma \bsl G \cong \SO(3)$ on $\L^2(\Gamma\bsl G)$ decomposes according to 
\[
\L^2(\Gamma\bsl G)\cong \bigoplus_{k\in \N} \mathcal{M}_k, \qquad \mathcal{M}_k\cong \pi_k^{\oplus 2k+1},
\]
 where $\pi_k$ denotes the irreducible representation of $\SO(3)$ of dimension $2k+1$.
In particular, the Beltrami--Laplace eigenvalue corresponding to $\pi_k$ is $k(k+1)$, and the restriction of $\pi_k$ to $K$ is isomorphic to  $\widehat\bigoplus_{l=-k}^{k} \sigma_l$.
{Hence, if we choose an orthonormal sequence $\{\psi_j\}_{j\geq 0}$ in $\L^2(\Gamma\bsl G)$ consisting of Hecke--Maass forms with $\psi_j\in (\sigma_{l_j})^{\oplus 2k_j+1} \subset \mathcal{M}_{k_j}$, where $| l_j |\leq k_j$,  Theorem \ref{thm:SO(3)equiv} yields
\[
\norm{\psi_j}_\infty\ll_\eps  k_j^{\frac{5}{12}+\eps}.
\]
}

\section{Subconvex bounds on $\Gamma\bsl G$ for semisimple groups and arithmetic congruence subgroups}
\label{sec:6}

\subsection{General framework}

 In what follows, we shall develop  a general framework to prove subconvex bounds of Hecke--Maass forms on semisimple groups.
For this, let us return to the general setting of Sections \ref{sec:hecke}, \ref{sec:FIO} and \ref{sec:4}.
Write  $\mathfrak{L}^2:=\L^2_{\chi}(\Gamma\bsl G)$ or $\L^2_{\sigma,\chi}(\Gamma\bsl G)$, and consider on this space the family of  Hecke operators $T^\chi_{\Gamma\beta\Gamma}$  introduced in \eqref{eq:heckechar}, together with the corresponding  $\C$-module 
\[
H_\Xi^\chi:=\langle \T^\chi_{\Gamma\beta\Gamma} \mid \beta\in\Xi\rangle
\]
generated by them.  In what follows, we assume that there exists a submodule $\mathcal{H}$ of $H_\Xi^\chi$  such that there is  an orthonormal basis $\{\phi_j\}_{j\in\N}$ of $\L^2(\Gamma_\chi\bsl G)$ compatible with the decomposition \eqref{eq:directsum},  and  in case that $P$ commutes with the right regular $K$-representation, also with the decomposition \eqref{eq:PW}, consisting of  simultaneous eigenfunctions of  $P$ and all $\T \in \mathcal{H}$ with $P \phi_j=\lambda_j \phi_j$. As before, such simultaneous eigenfunctions will be  called \emph{Hecke--Maass forms of rank $1$}. We also suppose that $\T^*$ belongs to $\mathcal{H}$ for each $\T\in\mathcal{H}$ and that the cospheres $S^\ast_x(\Gamma_\chi \bsl G):=\mklm{(x,\xi) \in T^\ast (\Gamma_\chi \bsl G)\mid  p(x,\xi)=1}$ are strictly convex for all $x \in \Gamma_\chi\bsl G$.
 Further,  consider the lattice point counting functions $\mathfrak{M}(x,\beta, \delta):=\M(x,\beta, \delta)$ or $M(x,\beta, \delta)$ {corresponding to $\mathfrak{L}^2$, respectively; that is, 
\bqn %\label{eq:01.09.2017}
\M(\delta):=\M(x,\beta,\delta):=\#\mklm{\alpha \in \Gamma_\chi \backslash \Gamma \beta \Gamma:  \dist(x,\alpha \cdot x)<\delta},
\eqn
where $\dist(\alpha \cdot x,x) \equiv  \dist (\Gamma \alpha x, \Gamma x)$, and $M(x,\beta, \delta)$ is as in  \eqref{eq:12.02.2017}.} We then have the following 

\begin{lem}
\label{lem:general}
Fix a character $\chi$ in $\widehat\Gamma$ such that $[\Gamma:\Gamma_\chi]<\infty$. Let $\phi_{j_0}$ be a Hecke--Maass form in $\mathfrak{L}^2$ with corresponding spectral eigenvalue $\lambda_{j_0}$.
Let $\mathcal{P}'$ be an infinite set and $\mathfrak{N}':\mathcal{P}'\to \N$ a mapping such that
\bq \label{eqnote1}
\#\mklm{v\in \mathcal{P}' \mid  N/2< \mathfrak{N}'(v)< N} \gg N/\log N.
\eq
Assume that for each element $v\in\mathcal{P}'$   there exists a Hecke operator $\T_v'\in\mathcal{H}$ satisfying $\T'_v\phi_{j_0}=\phi_{j_0}$, and that  for any $N \in \N$ and any $x\in \Gamma_\chi\bsl G$  we have a suitable finite subset $Q_{N,x}'\subset \mathcal{P}'$ such that $\# Q_{N,x}'\ll \log N$. Write
\[
 \T_{N,x}':=\sum_{v\in\mathcal{Q}_{N,x}'}\T_v'  \quad \text{where} \quad \mathcal{Q}_{N,x}':=\{v\in\mathcal{P}' \mid N/2<  \mathfrak{N}'(v)< N, \;\; v\not\in Q_{N,x}' \}.
\]
As a linear operator  on $\mathfrak{L}^2$, $\T'_{N,x}\circ (\T_{N,x}')^*$ can be represented  as 
\[ 
\T'_{N,x}\circ (\T_{N,x}')^*=\sum_{u=1}^l a_u \T_{\Gamma\alpha_u\Gamma}^\chi % \qquad \text{as a linear operator on $L^2_{\sigma,\chi}(\Gamma\bsl G)$}
\]
for certain $l \in \N$, $a_u\in\C$, and $\alpha_u \in \Xi$ depending on $x$.
Further, suppose that there exist numbers $0 < \kappa\ll 1$ and $A_1$, $A_2>2$ such that for each $N\gg 1$ and each $x\in \Gamma_\chi\bsl G$ one has
\bq
\label{eq:19.03.2017}
\sum_{u=1}^l |a_u||\mathfrak{M}(x,\alpha_u,N^{-A_2})|\ll N^{2-2\kappa},\quad \sum_{u=1}^l |a_u||\Gamma_\chi\bsl \Gamma\alpha_u\Gamma|\ll N^{A_1}.
\eq
Then, if  $\mathfrak{L}^2=\L^2_\chi(\Gamma\bsl G)$, there exists a constant $\delta>0$ such that
\[
\|\phi_{j_0}\|_\infty \ll   \lambda_{j_0}^{\frac{\dim G-1}{2m}-\delta},
\]
while if $\mathfrak{L}^2=\L^2_{\sigma,\chi}(\Gamma\bsl G)$, there exists a constant $\delta>0$, which does not depend on $\sigma$, such that
%\footnote{\bf Please improve this and the following proof according to the improved $K$-type estimate in Proposition 3.3.}
\[
\|\phi_{j_0}\|_\infty \ll \sqrt{ d_\sigma \max_{u \leq \lfloor \frac{\dim K}2+1\rfloor} \norm{D^u \sigma}_\infty }\, \,  \lambda_{j_0}^{\frac{\dim G/K-1}{2m}-\delta}.
\]
{Finally,  if $K=T$ is a torus and  $\mathfrak{L}^2=\L^2_\chi(\Gamma\bsl G)$, there exists a constant $\delta>0$ such that
\[
\|\phi_{j_0}\|_\infty \ll   \lambda_{j_0}^{\frac{\dim G/K-1}{2m}-\delta}.
\]
}

\end{lem}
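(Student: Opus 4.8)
The plan is to run the arithmetic amplification argument in exactly the form already carried out in Section \ref{sec:IS}, now in the abstract setting of the lemma. First I would fix the Hecke--Maass form $\phi_{j_0}\in\mathfrak{L}^2$ with eigenvalue $\lambda_{j_0}$, set $\mu:=\lambda_{j_0}^{1/m}$ (so $\mu$ is the eigenvalue of $Q=\sqrt[m]{P}$ on $\phi_{j_0}$), and expand $K_{\T'_{N,x}\circ(\T'_{N,x})^*\circ\widetilde s_\mu\circ\Pi_\sigma}(x,x)$ in the orthonormal basis $\{\phi_j\}$. Because every $\phi_j$ is a simultaneous eigenfunction of $P$ and of all $\T\in\mathcal{H}$, this kernel equals $\sum_{j}\rho(\mu-\mu_j)\,|c_j|^2\,|\phi_j(x)|^2$ where $c_j$ is the eigenvalue of $\T'_{N,x}$ on $\phi_j$; by construction $c_{j_0}=\#\mathcal{Q}'_{N,x}\gg N/\log N$ using \eqref{eqnote1} and $\#Q'_{N,x}\ll\log N$. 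On the other hand, writing $\T'_{N,x}\circ(\T'_{N,x})^*=\sum_u a_u\T^\chi_{\Gamma\alpha_u\Gamma}$ and invoking Proposition \ref{thm:15.09.2016} for each summand (with $\delta:=N^{-A_2}$), the same kernel is bounded by
\[
\frac{d_\sigma}{[\Gamma:\Gamma_\chi]}\sup_{u\leq\lfloor\frac{\dim K}2+1\rfloor}\norm{D^u\sigma}_\infty\sum_u|a_u|\Big\{\mu^{d-\dim K-1}\mathfrak{M}(x,\alpha_u,N^{-A_2})+\mu^{\frac{d-\dim K-1}2}\int_{N^{-A_2}}^C s^{-1/2}\,d\mathfrak{M}_u(s)\Big\},
\]
where $d=\dim G$. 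The first hypothesis in \eqref{eq:19.03.2017} controls the first term by $\mu^{d-\dim K-1}N^{2-2\kappa}$ (up to $\sigma$-factors), and integration by parts in the Stieltjes integral together with the crude bound $\mathfrak{M}_u(s)\ll|\Gamma_\chi\bsl\Gamma\alpha_u\Gamma|$ and the second hypothesis in \eqref{eq:19.03.2017} bounds the second term by $\mu^{\frac{d-\dim K-1}2}N^{A_2/2}N^{A_1}$ (again up to $\sigma$-factors and logs).

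Next I would combine the two inequalities: since $\rho>0$ on $[-1,1]$,
\[
\Big(\frac{N}{\log N}\Big)^2|\phi_{j_0}(x)|^2\ll K_{\T'_{N,x}\circ(\T'_{N,x})^*\circ\widetilde s_\mu\circ\Pi_\sigma}(x,x)\ll d_\sigma\sup_{u\leq\lfloor\frac{\dim K}2+1\rfloor}\norm{D^u\sigma}_\infty\big(\mu^{d-\dim K-1}N^{2-2\kappa}+\mu^{\frac{d-\dim K-1}2}N^{A_1+A_2/2}\big),
\]
up to $\mu^{-\infty}$ and logarithmic factors, where I have absorbed $[\Gamma:\Gamma_\chi]^{-1}\leq 1$. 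Dividing by $N^2$ gives, modulo $\mu^\varepsilon$,
\[
|\phi_{j_0}(x)|^2\ll d_\sigma\sup_{u}\norm{D^u\sigma}_\infty\big(\mu^{d-\dim K-1}N^{-2\kappa}+\mu^{\frac{d-\dim K-1}2}N^{A_1+A_2/2-2}\big).
\]
Now I optimize in $N$: since $A_1,A_2>2$ the second exponent $A_1+A_2/2-2$ is positive, so there is a power $N=\mu^{a}$ with $a>0$ making both terms equal to $\mu^{d-\dim K-1-2\eta}$ for some $\eta=\eta(\kappa,A_1,A_2)>0$; explicitly $a$ is chosen so that $\mu^{-2\kappa a}=\mu^{-(d-\dim K-1)/2+(A_1+A_2/2-2)a}$, i.e. $a=\frac{d-\dim K-1}{2(2\kappa+A_1+A_2/2-2)}$, and then $\eta=\kappa a$. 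Taking square roots and recalling $\mu=\lambda_{j_0}^{1/m}$, so that $d-\dim K-1=\dim G/K-1$, yields
\[
\|\phi_{j_0}\|_\infty\ll\sqrt{d_\sigma\max_{u\leq\lfloor\frac{\dim K}2+1\rfloor}\norm{D^u\sigma}_\infty}\,\lambda_{j_0}^{\frac{\dim G/K-1}{2m}-\delta},\qquad\delta:=\eta/m,
\]
which is the $\mathfrak{L}^2=\L^2_{\sigma,\chi}$ assertion; crucially $\delta$ depends only on $\kappa,A_1,A_2,m,\dim G,\dim K$, not on $\sigma$. The case $\mathfrak{L}^2=\L^2_\chi(\Gamma\bsl G)$ is identical but uses the non-equivariant Proposition \ref{prop:31.05.2017}-based estimate — one replaces $\dim K$ by $0$ throughout, drops the $d_\sigma$ and $\norm{D^u\sigma}_\infty$ factors, and uses $\mathfrak{M}=\M$ — giving exponent $\frac{\dim G-1}{2m}-\delta$. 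For $K=T$ a torus and $\mathfrak{L}^2=\L^2_\chi(\Gamma\bsl G)$ one instead uses the improved toric estimate in Proposition \ref{thm:15.09.2016} valid for $\sigma\in\mathcal{V}_\mu$ (in particular for the trivial $\sigma$), which removes the $\dim K$-cost and the $\sigma$-factors simultaneously, yielding $\frac{\dim G/K-1}{2m}-\delta$.

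The main obstacle is bookkeeping rather than a genuine analytic difficulty: one must verify that Proposition \ref{thm:15.09.2016} applies to the \emph{operator} $\T'_{N,x}\circ(\T'_{N,x})^*$ rather than to a single double-coset Hecke operator, which requires expressing it via \eqref{eq:16.2.2017} as a finite $\C$-linear combination $\sum_u a_u\T^\chi_{\Gamma\alpha_u\Gamma}$ with $\alpha_u\in\Xi$ — this is exactly what the hypothesis $\T^*\in\mathcal{H}$ for all $\T\in\mathcal{H}$ and the closure of $\mathcal{H}$ under the convolution product guarantee — and then summing the kernel bound over $u$ with the weights $|a_u|$ controlled by \eqref{eq:19.03.2017}. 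A secondary point requiring care is the treatment of the negligible remainder: Proposition \ref{thm:15.09.2016} carries an error $O(|\Gamma_\chi\bsl\Gamma\alpha_u\Gamma|\,\mu^{-\infty})$, and after summation this is $O\big(\sum_u|a_u||\Gamma_\chi\bsl\Gamma\alpha_u\Gamma|\,\mu^{-\infty}\big)=O(N^{A_1}\mu^{-\infty})=O(\mu^{-\infty})$ because $N$ is a fixed power of $\mu$; so it is harmless, but this must be stated. Finally, one should note that $c_{j_0}=\#\mathcal{Q}'_{N,x}$ is real and nonnegative, so $|c_{j_0}|^2$ is genuinely of size $(N/\log N)^2$, and that all estimates are uniform in $x\in\Gamma_\chi\bsl G$ since Proposition \ref{thm:15.09.2016} is.
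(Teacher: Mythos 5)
Your proposal is correct and follows essentially the same route as the paper: amplify with $\T'_{N,x}$, use positivity of $\rho$ to bound $(\#\mathcal{Q}'_{N,x})^2|\phi_{j_0}(x)|^2$ by the kernel of $\T'_{N,x}\circ(\T'_{N,x})^*\circ\widetilde s_\mu\circ\Pi_\sigma$ on the diagonal, control the near- and far-diagonal contributions by the two hypotheses in \eqref{eq:19.03.2017} via Propositions \ref{prop:30.01.2017}/\ref{thm:15.09.2016}, and optimize $N\sim\mu^{B}$ with $B=\frac{\dim G/K-1}{2(A_1+A_2/2-2+2\kappa)}$, exactly the paper's choice. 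The only cosmetic difference is that you route the far-diagonal term through the Stieltjes integral of Proposition \ref{thm:15.09.2016} with the crude bound $\mathfrak{M}_u(s)\ll|\Gamma_\chi\bsl\Gamma\alpha_u\Gamma|$, whereas the paper sums the pointwise off-diagonal bound directly; both give $N^{A_1+A_2/2}$.
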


\begin{proof}
Let us consider first the case  $\mathfrak{L}^2=\L^2_{\sigma,\chi}(\Gamma\bsl G)$. Set $\mu:=\sqrt[m]{\lambda_{j_0}}$
%$$P_{N,x,q}:=\#\{ p \mid \, \text{$p$ is a prime}, \, 1\leq p\leq N, \, (p,q_{N,x}q)=1 \},$$
and denote by $\lambda'_{j,N}$  the eigenvalue of $\T_{N,x}'$ for $\phi_j$, so that 
\bqn 
|\T_{N,x}' \phi_j(x)|^2= \lambda'_{j,N}\overline{\lambda'_{j,N}}\phi_j(x) \overline{\phi_j(x)}=\T_{N,x}'\circ(\T_{N,x}')^*\phi_j(x)\overline{\phi_j(x)}.
\eqn
Taking $\tilde N=0$ in Proposition \ref{prop:30.01.2017}  we deduce that
\[
\sum_{u=1}^l a_u  \sum_{\stackrel{\alpha\in\Gamma_\chi\bsl \Gamma\alpha_u\Gamma,}{\dist (xK,\alpha \cdot xK)^{\dim G/K-1} \geq N^{-A_2}}} \overline{\chi(\alpha)}  K_{\tilde s_\mu \circ \Pi_\sigma}(\alpha \cdot x,x) \ll d_\sigma\max_{u \leq \lfloor \frac{\dim K}2+1\rfloor} \norm{D^u \sigma}_\infty \mu^{\frac{\dim G/K-1}{2}}  N^A 
\]
up to terms of order $O(N^{A} \, \mu^{-\infty})$, 
where we set $A:=\frac{1}{2}A_2+A_1$. With the same arguments than at the end of Section \ref{sec:FIO} and in  the proof of Proposition \ref{thm:15.09.2016} one now deduces with $\mu_j:= \sqrt[m]{\lambda_j}$ for any $x \in \Gamma_\chi \bsl G$
\begin{align*}
(\# \mathcal{Q}_{N,x}')^2& |\phi_{j_0}(x)|^2  = |\T_{N,x}' \phi_{j_0}(x)|^2 \leq \sum_{\stackrel{\mu \leq \mu_j \leq \mu+1,}{\phi_j \in \L^2_\sigma(\Gamma_\chi \bsl G)}} |\T_{N,x}' \phi_j(x)|^2 \\
& \ll \sum_{\stackrel{j \geq 0,}{\phi_j \in \L^2_\sigma(\Gamma_\chi \bsl G)}} \rho(\mu-\mu_j)  |\T_{N,x}' \phi_j(x)|^2   = K_{\T_{N,x}'\circ (\T_{N,x}')^* \circ \widetilde s_\mu \circ \Pi_\sigma }(x,x)  \\
& \ll  d_\sigma  \max_{u \leq \lfloor \frac{\dim K}2+1\rfloor} \norm{D^u \sigma}_\infty \mu^{\dim G/K-1}  N^{2-2\kappa}  + d_\sigma\max_{u \leq \lfloor \frac{\dim K}2+1\rfloor} \norm{D^u \sigma}_\infty \mu^{\frac{\dim G/K-1}{2}}  N^A \\
& = d_\sigma \max_{u \leq \lfloor \frac{\dim K}2+1\rfloor} \norm{D^u \sigma}_\infty \Big( \mu^{\dim G/K-1}  N^{2-2\kappa} + \mu^{\frac{\dim G/K-1}{2}}  N^A  \Big)  
\end{align*}
up to terms of order $O(N^{A} \, \mu^{-\infty})$. Hence, the assertion follows from \eqref{eqnote1} by taking $N\sim \mu^B$,  $B:=\frac{\dim G/K-1}{2(A-2+2\kappa)}$. The case $\mathfrak{L}^2=\L^2_{\chi}(\Gamma\bsl G)$ is seen in a similar way taking into account Proposition \ref{prop:31.05.2017} {and the toric case in Proposition \ref{prop:30.01.2017}. }
\end{proof}

\begin{rem}
The assumptions of the previous lemma are primarily motivated by the work of Marshall \cite{Marshall2017} in the case that $\chi$ is trivial. One can easily verify that they are fulfilled in the setup of Section \ref{sec:IS} when $\mathcal{P}'$ is the totality of primes and $\mathfrak{N}'$ is the inclusion mapping $\mathcal{P}' \subset \N$.
Indeed, for each prime $p$ with $(p,q)=1$, there exists an element $\beta_p$ in $\cR(p^2)$ such that $\tilde \T^\chi_{p^2}:=\T^\chi_{\Gamma\beta_p\Gamma}=\T^\chi_{p^2}-\T^\chi_{\Gamma(pI_2)\Gamma}$, see \cite[p.\  217]{Miyake}. Now, let $\tilde\lambda_j(p^2)$ be the eigenvalue of $\tilde \T^\chi_{p^2}$ belonging to the eigenfunction $\phi_j \in \L^2_{\sigma,\chi}(\Gamma  \bsl G)$. For each $\phi_j$ in $\L^2_{\sigma,\chi}(\Gamma  \bsl G)$ and $p$ as above,  $|\lambda_j(p)|\leq p^{1/2}/2$ implies that  $|\tilde\lambda_j(p^2)|\geq p/2$ by $|\tilde\lambda_j(p^2)|=|\lambda_j(p)^2-(p+1)\chi(p)|$. Therefore, if we choose
\[
\T_p':=\begin{cases} \frac{1}{\lambda_{j_0}(p)} \T^\chi_p & \text{if $|p^{-1/2}\lambda_{j_0}(p)|>1/2$}, \\ \frac{1}{\tilde\lambda_{j_0}(p^2)} \tilde \T^\chi_{p^2} & \text{otherwise},\end{cases} 
\]
 the mentioned assumptions must hold by Lemma \ref{lem:05.09.2016}.
This choice is essentially the same as in the work of Blomer-Maga \cite{BMa,BMb}  on  $\SL(n,\Z)\subset \PGL(n,\R)$ in the case $n=2$. 
\end{rem}

{
\begin{rem}\label{rem:rank}
When using Lemma \ref{lem:general},  we shall take for $\mathcal{P}'$ a subset of the totality of primes. Note that it is unnecessary to suppose that $\phi_j$ is an eigenfunction of the operators $\T_p$ for all primes $p$ to prove the subconvex bound in Section \ref{sec:IS}, as we did already see in Section \ref{sec:SO(3)}.
In fact, one can make the conditions on $\mathcal{P}'$ and $\mathcal{H}$ weaker.
Namely, we can replace $\mathcal{P}'$ by a smaller subset satisfying \eqref{eqnote1} and replace $\mathcal{H}$ by the submodule $\langle  \T_v' \mid v\in \mathcal{P}'  \rangle$.
This means that our concept of a Hecke-Maass form is much weaker than the usual one in Section \ref{sec:IS}.
Such forms are not eigenfunctions of the center of the universal enveloping algebra in general,  and can be obtained in abundance by functorial lifts of Hecke characters.
\end{rem}
}

\subsection{Equivariant subconvex bounds}
\label{sec:6.2}

In what follows, we shall derive  equivariant subconvex bounds on arithmetic quotients  for a large class of semisimple algebraic groups, extending the work of Marshall \cite{Marshall2017} to non-spherical situations. Thus, let $\uG$ be a connected semisimple algebraic group over a number field $F$.
We write $\uG(k)$ for the set of $k$-rational points in $\uG$ for a field $k\supset F$ and $F_v$ for the completion of $F$ by a place $v$ of $F$.
Following  \cite{Marshall2017}, we assume that there exists a real place $v_0$ of $F$ such that

\medskip

\begin{itemize}
\item[(WS)] The group $\uG(F_{v_0})$ is quasi-split, and not isogeneous to a product of odd special unitary groups.
%\item[(S)] The group $\uG(F_{v_0})$ is split.
%\item[(C)] There is a semisimple algebraic group $\underline{L}$ over $\R$ such that $\uG(F_{v_0})$ is isomorphic to $\mathrm{Res}_{\C/\R}\underline{L}$.
\end{itemize}

\medskip

%We denote by $\mathrm{Res}_{F_1/F_2}$ the restriction of scalars from $F_1$ to $F_2$ for a field $F_1$ and its subfield $F_2$.
We set $H:=\mathrm{Res}_{F/\Q}\uG$, where $\mathrm{Res}_{F/\Q}$ means the restriction of scalars from $F$ to $\Q$.
Then $H$ is a connected semisimple algebraic group over $\Q$ and $G:=H(\R)$  a real semisimple Lie group with finite center \cite[Chapter 3]{PR}.
Let $K$ be a maximal compact subgroup in $G$,  $K_0$  an open compact subgroup of $H(\bA_\mathrm{fin})$, and put  $\Gamma:=H(\Q)\cap(H(\R)K_0)$. In the following theorem, we will also impose the condition $H(\bA)=H(\Q)(H(\R)K_0)$. This condition holds for any $K_0$ if $H$ has the strong approximation property with respect to $\infty$, that is, if $H$ is simply connected {as an  algebraic group,  and does not have any $\R$-simple component $H'$ such that  $H'(\R)$ is  compact, compare \cite[Theorem 7.12]{PR}.
As the second main result of this paper we obtain 
\begin{thm}
\label{thm:general1} 
Suppose that $H(\bA)=H(\Q)(H(\R)K_0)$ and $H(\Q)\bsl H(\bA)$ is compact, so that $\Gamma \bsl G$ is compact as well. Further, assume that $H$ satisfies the condition {\rm (WS)}. Now, let  $P_0$ be an elliptic left-invariant differential operator on $G$ of degree $m$ which gives rise to a  positive and symmetric operator on $\Gamma \bsl G$  that commutes with the right regular $K$-representation and has strictly convex cospheres $S^\ast_x(\Gamma \bsl G)$.
%\footnote{\bf I wrote down all the conditions for $P$... Would the statement also be true for classical pseudodifferential operators? That is, do you really need that $P_0$ is a differential operator on $G$?} 
Then, there exist a submodule $\mathcal{H}$ of $H^{\chi=1}_{\Xi=H(\Q)}$ and a constant $\delta>0$  such that
\begin{enumerate}
\item there is an orthonormal basis $\{\phi_j\}_{j\in\N}$ of $\L^2(\Gamma\bsl G)$ which consists of simultaneous eigenfunctions for the unique self-adjoint extension $P$ of $P_0$ and all $\T \in \mathcal{H}$;
\item for each $\phi_j\in \L^2_\sigma(\Gamma\bsl G)$ with spectral eigenvalue $\lambda_j$ one has 
\[
\|\phi_j\|_\infty \ll \sqrt {d_\sigma \sup_{u \leq \lfloor \frac{\dim K}2+1\rfloor} \norm{D^u \sigma}_\infty} \, \, \lambda_j^{\frac{\dim G/K-1}{2m}-\delta}.
\] 
{If $K=T$ is a torus, one has the stronger estimate
\[
\|\phi_j\|_\infty \ll  \, \lambda_j^{\frac{\dim G/K-1}{2m}-\delta}.
\] }
\end{enumerate}
\end{thm}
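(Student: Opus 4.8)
The plan is to apply Lemma~\ref{lem:general} with $\mathfrak{L}^2 = \L^2_{\sigma,\chi}(\Gamma\bsl G)$ for the trivial character $\chi=1$, so that the only task is to verify its hypotheses in the adelic setup governed by the condition (WS). First I would recall from \cite{Marshall2017} the construction of the relevant Hecke operators: since $H$ is semisimple over $\Q$ and $K_0 = \prod_p K_{0,p}$ is open compact in $H(\bA_\mathrm{fin})$, for all but finitely many primes $p$ the group $H(\Q_p)$ is unramified and $K_{0,p}$ is hyperspecial, so one has the commutative unramified Hecke algebra $\mathcal{H}_p$ over $\Q_p$; these act on $\L^2(\Gamma\bsl G)$ via the identification $\Gamma\bsl G \simeq H(\Q)\bsl H(\bA)/K_0$ coming from the assumption $H(\bA)=H(\Q)(H(\R)K_0)$. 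I take $\mathcal{H}$ to be (a submodule of) the algebra generated by all these $\mathcal{H}_p$, which by construction consists of operators commuting with each other, with $P$ (being left-invariant, hence acting only on the archimedean factor), and with the right $K$-action; the existence of a simultaneous orthonormal eigenbasis $\{\phi_j\}$ compatible with the Peter--Weyl decomposition then follows by the usual spectral theory on the compact quotient, giving part (1). Note $\mathcal{H}$ is closed under adjoints because the unramified Hecke algebra is (the adjoint of $\T^\chi_{\Gamma\alpha\Gamma}$ is $\T^\chi_{\Gamma\alpha'\Gamma}$ with $\alpha'$ the image of $\alpha$ under the appropriate involution, and this preserves unramifiedness).

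Next I would set up the amplifier. Take $\mathcal{P}'$ to be the set of unramified primes $p$ (those not in the finite bad set), with $\mathfrak{N}'$ the inclusion into $\N$; the prime number theorem gives \eqref{eqnote1}. For a fixed Hecke--Maass form $\phi_{j_0}\in\L^2_{\sigma,\chi}(\Gamma\bsl G)$ one uses condition (WS) exactly as in \cite{Marshall2017}: it guarantees, via the known bounds towards the Ramanujan conjecture for quasi-split groups (and the exclusion of odd unitary groups, where the archimedean analysis degenerates), that for each $p$ one can select a single generator $\T'_p\in\mathcal{H}_p$ — built from a minuscule or otherwise well-chosen coweight — normalised so that $\T'_p\phi_{j_0}=\phi_{j_0}$, while the support $\Gamma_\chi\bsl\Gamma\beta_p\Gamma$ of $\T'_p$ has size polynomially bounded in $p$ (giving the second inequality in \eqref{eq:19.03.2017} with a fixed exponent $A_1$). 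The small exceptional sets $Q'_{N,x}$ absorb those finitely many $p$ where $\phi_{j_0}$ has an anomalously small Hecke eigenvalue, and $\#Q'_{N,x}\ll\log N$. Then $\T'_{N,x}\circ(\T'_{N,x})^*$ expands, via the structure constants of $\mathcal{H}$ as in \eqref{eq:16.2.2017}, into $\sum_u a_u\T^\chi_{\Gamma\alpha_u\Gamma}$ with controlled coefficients.

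The crux — and the main obstacle — is verifying the first bound in \eqref{eq:19.03.2017}, namely the lattice-point count $\sum_u |a_u|\,|M(x,\alpha_u,N^{-A_2})| \ll N^{2-2\kappa}$ with some fixed $\kappa>0$ uniform in $x$. This is precisely the arithmetic geometric input of \cite{Marshall2017}: one must show that the number of $\alpha\in\Gamma_\chi\bsl\Gamma\alpha_u\Gamma$ moving a point $xK\in G/K$ to within distance $N^{-A_2/(\dim G/K-1)}$ of itself is smaller, by a power saving, than the trivial bound $|\Gamma_\chi\bsl\Gamma\alpha_u\Gamma|$. This is a Diophantine statement about the $F$-points of $\uG$ lying in a small metric ball, proved in \cite{Marshall2017} by reduction theory, the structure of the building, and a counting argument exploiting that a generic such $\alpha$ generates a large subgroup; condition (WS) enters here too to control the contribution of fixed points. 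I would invoke this directly rather than reprove it, remarking only that the passage from $\chi$ trivial (Marshall's case) to the present setting is immediate since we have taken $\chi=1$.

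Finally, with all hypotheses of Lemma~\ref{lem:general} in place, the lemma yields $\|\phi_{j_0}\|_\infty \ll \sqrt{d_\sigma \sup_{u\leq\lfloor\frac{\dim K}{2}+1\rfloor}\norm{D^u\sigma}_\infty}\,\lambda_{j_0}^{\frac{\dim G/K-1}{2m}-\delta}$ with $\delta = \delta(\kappa,A_1,A_2,\dim G/K)>0$ independent of $\sigma$ and of $j_0$, which is part (2); the toric refinement when $K=T$ comes from the corresponding improved estimate in Proposition~\ref{prop:30.01.2017} already built into Lemma~\ref{lem:general}. The example $H=\SL(1,D)$ with $D$ a central division algebra of index $n$ over $\Q$, giving $G=\SL(n,\R)$, satisfies (WS) since $\SL(n,\R)$ is split, and $H(\Q)\bsl H(\bA)$ is compact because $D$ is a division algebra, so the theorem applies.
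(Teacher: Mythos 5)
Your proposal is correct and follows essentially the same route as the paper: reduce to Lemma~\ref{lem:general} with $\chi=1$ and $\Xi=H(\Q)$, build $\mathcal{H}$ from the Hecke algebras at the good finite places via the adelic--classical dictionary $\Gamma\bsl G\cong H(\Q)\bsl H(\bA)/K_0$, take the amplifier $\T_v'$ from \cite[Proposition 6.1]{Marshall2017}, and import Marshall's Section~3 lattice-point counting for the key bound \eqref{eq:19.03.2017}. The only (minor) imprecision is that, since $H=\mathrm{Res}_{F/\Q}\uG$, the set $\mathcal{P}'$ must be Marshall's set of finite places of $F$ where $\uG(F_v)$ splits, with $\mathfrak{N}'(v)$ the order of the residue field, so that \eqref{eqnote1} rests on the prime ideal theorem together with Chebotarev rather than the prime number theorem alone.
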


\begin{rem}
\label{rem:22.5.2017}
The equivariant   subconvex bound of the previous theorem can be rephrased using the Cartan-Weyl classification  of unitary irreducible representations of compact groups. In fact, assume that  $K$ is a compact connected semisimple Lie group, $\k$ its Lie algebra,  and $T\subset K$ a maximal torus with Lie algebra $\t$. 
 % The exponential function $\exp$ is a covering homomorphism of $\t$ onto $T$, and its kernel $L$ a lattice in $\t$. Let $\widehat T$ denote the  \emph{set of characters of $T$}, that is, of all continuous homomorphisms of $T$ into the circle. The differential of a character $\mu: T \to S^1$, denoted by the same letter, is a linear form  $\mu:\t\to i \R$ which is \emph{integral} in the sense that $\mu(L) \subset 2\pi i \Z$. On the other hand, if $\mu$ is an integral linear form, one defines
%\bqn 
%t^\mu:= e^{\mu(X)}, \qquad t= \exp X \in T,
%\eqn
%setting up an identification of $\widehat T$ with the integral linear forms on $\t$. 
Denote by $\k_\C$ and $\t_\C$  the complexifications of $\k$ and $\t$, respectively. Then $\t_\C$ is a Cartan subalgebra of $\k_\C$, and we write  $\Sigma(\k_\C,\t_\C)$ for the corresponding system  of roots and $\Sigma^+$ for a set of positive roots. %Since roots define integral linear forms on $\t$, one can regard them as characters of $T$. 
%As before, let $\widehat K$ be the set of all equivalence classes of irreducible unitary representations of $K$ and $\sigma \in \widehat K$. %Due to the invariance of the trace under cyclic permutations the character of $\chi$ satisfies  $\chi(t)=\chi(g t g^{-1})$ for all $t,g \in G$. 
%Since any element in $K$ is conjugated to an element of $T$, a character $\sigma \in \widehat K$ is fully determined by its restriction to $T$. 
Now, as a consequence of the Cartan-Weyl classification of irreducible finite-dimensional representations of reductive Lie algebras over $\C$ one has the  identification 
\bqn
\widehat K\, \simeq\, \mklm{\Lambda \in \t_\C^\ast: \text{ $\Lambda$ is  dominant integral and $T$-integral}},
\eqn 
compare \cite{wallach}, 
%Here an element $\Lambda \in \t_\C^\ast$ is called \emph{dominant integral} if  $2(\Lambda, \alpha)/(\alpha,\alpha)$ is a non-negative integer for any $\alpha \in \Sigma^+$, $(\cdot, \cdot)$ being the symmetric non-degenerate bilinear form on $\t_\C^\ast $ induced by an $\Ad(K)$-invariant inner product on $\k$.  
%Next, assume that $K$ is semisimple,  write $\rho:= \frac 12 \sum_{\alpha \in \Sigma^+} \alpha$, and let $W:=W(K,T)$ be the Weyl group. For an integral linear form $\mu\in \widehat T$, define the {alternating sum} $ A(\mu)(t):= \sum_{w \in W} (\det w) \,  t^{w\mu}$.
and we write   $\Lambda_\sigma\in \t_\C^\ast$ for the \emph{highest weight} corresponding to  $\sigma \in \widehat K$ under  this isomorphism. Weyl's dimension formula then implies that $d_\sigma=O\big (|\Lambda_\sigma|  ^{|\Sigma^+|}\big )$, while from Weyl's character formula one infers that 
%asserts that for regular  $t \in T$ one has \cite[Section 4.14]{varadarajan74}
%\bq
%\label{eq:28.12.2016}
%\sigma_{|T} (t)= \frac{A(\Lambda_\sigma+\rho)(t)}{t^\rho \prod_{\alpha \in \Sigma^+}(1-t^{-\alpha})}= \frac{A(\Lambda_\sigma+\rho)(t)}{ \prod_{\alpha \in \Sigma^+}(t^{\alpha/2}-t^{-\alpha/2})}.
%\eq
%If $G$ is simply connected this can be written as $\chi_{|T} (t)= {A(\Lambda_\chi+\rho)(t)}/{A(\rho)(t)}$. 
%Since $\mu(H) \in i\R$ for all $H \in \t$ and any integral linear form $\mu$ on $\t$, one deduces for any $l \in \N$ as $|\Lambda_\chi| \to \infty$ the estimate
%\bqn 
%\frac{d^l}{ds^l} \chi_{|T}(\exp sH)= O\big ( |\Lambda_\chi|^{l+|\Sigma^+|}\big ),
%\eqn
%compare \cite[Proof of Theorem 4.14.6]{varadarajan74}.   Writing $g=h(g) t(g) h (g)^{-1}$ for an arbitrary element $g \in G$ with $t(g) \in T$ and $h(g) \in G$ we obtain with $\chi(g)= \chi_{|T}(t(g))$ the following consequence. 
 if $D^u$  is a differential operator on $K$ of order $u$, 
\bq
 \label{eq:29.08.2015}
 \norm{D^u \sigma}_\infty = O\big (|\Lambda_\sigma|  ^{u+|\Sigma^+|}\big ), \qquad |\Lambda_\sigma|\to \infty,
\eq
compare \cite[Eq.\ (3.5)]{ramacher10add}. Consequently, the bound in Theorem \ref{thm:general1} can be rewritten as
\[
\|\phi_j\|_\infty \ll \sqrt{|\Lambda_\sigma|^{{2|\Sigma^+|}+\lfloor \frac{\dim K}2+1\rfloor}} \lambda_j^{\frac{\dim G/ K-1}{2m}-\delta}.
\]
% In particular, introducing the character family $\mathcal{W}_\lambda:=\{\sigma\in \widehat K: |\Lambda_\sigma| \ll \log \lambda\}$, we arrive at the uniform bound
%\bqn 
%\norm{\phi_j}_\infty \ll_\eps  \lambda_j^{\frac{\dim G/ K-1}{2m}-\delta} \log \lambda_j
%\eqn
%for all $\phi_j$ belonging to isotypic components $\L^2_\sigma(\Gamma\bsl G)$ that satisfy $\sigma \in \mathcal{W}_{\lambda_j}$. 
\end{rem}

\begin{proof}[Proof of Theorem \ref{thm:general1}]
By translating  the results in \cite[Section 3]{Marshall2017} to our non-adelic setting, one verifies that the assumptions of Lemma \ref{lem:general} are fulfilled under the hypothesis  of the theorem. 
Note that it is unnecessary to relate the subgroup $K$ to the specific maximal connected compact subgroup considered in  \cite{Marshall2017}, because the assumptions in question are concerned only with the structure of the Hecke algebra and the lattice point counting function  $M(x,\alpha,\delta)$.

Let us explain this in a more detailed way. Since $H(\bA)=H(\Q)(H(\R) K_0)$ one has
\[
H(\Q)\bsl H(\bA) / K_0\cong \Gamma\bsl G.
\]
Now, any function $\varphi$ in $\L^2_\sigma(\Gamma\bsl G)$ can be  identified with a function $\varphi_\bA$ in $\L^2_\sigma(H(\Q)\bsl H(\bA) / K_0)$ by setting
\bq
\label{eq:identific}
\varphi_\bA(\gamma g k):= \varphi(g) , \qquad \gamma\in H(\Q) , \;\; g\in G , \;\; k\in K_0.
\eq
For each double coset $K_0\alpha K_0$ with $\alpha\in H(\bA_\mathrm{fin})$, a linear operator $T_{K_0\alpha K_0}$ on $\L^2_\sigma(H(\Q)\bsl H(\bA)/K_0)$ can then be  defined by setting
\[
(T_{K_0\alpha K_0} \phi_\bA)(x):= \sum_{h\in K_0\alpha K_0/K_0} \phi_\bA(xh),  \qquad \phi_\bA\in \L^2(H(\Q)\bsl H(\bA)/K_0).
\]
Moreover, there exist finitely many elements $\beta_1,\dots,\beta_m$ in $H(\Q)$ such that
\bq 
\label{eq:adeledisc}
H(\Q)\cap (H(\R)K_0\alpha^{-1} K_0)=\bigsqcup_{i=1}^m\Gamma \beta_i \Gamma,
\eq
the  intersection being non-empty due to the assumption $H(\bA)=H(\Q)(H(\R) K_0)$.\footnote{Without this assumption, the intersection in \eqref {eq:adeledisc}  might be empty, and the following arguments make no sense. Along the same lines, recall that  Hecke operators on $\mathrm{SO(3)}$ are only defined in the case $p\equiv 1\mod 4$. %In the adelic setting, Hecke operators can be defined for all primes $p$, but Hecke eigen functions are over the whole space $L^2(\Gamma_1\bsl G\sqcup \cdots \sqcup \Gamma_r\bsl G)=L^2(\Gamma_1\bsl G)\oplus\cdots\oplus L^2(\Gamma_r\bsl G)$ and some actions are outside single space $L^2(\Gamma_1\bsl G)$. %Perhaps, Propositions 3.1 and 3.2 can derive some statement similar to this theorem, but $L^2(\Gamma_1\bsl G\sqcup \cdots \sqcup \Gamma_r\bsl G)$ is probably not suitable for this paper.
}
 This implies  that for all $\varphi\in \L^2_\sigma(\Gamma\bsl G)$
\bq
\label{eq:correspondence}
T_{K_0 \alpha K_0} \varphi_\bA=   \sum_{i=1}^m  T_{\Gamma\beta_i\Gamma}\varphi.
\eq
Hence, any adelic Hecke operator $T_{K_0\alpha K_0}$ can be regarded as a sum of non-adelic Hecke operators via the identification $\varphi\equiv\varphi_\bA$.

In order to apply  Lemma \ref{lem:general} in the present context, we choose $\chi=1$ and $\Xi=H(\Q)$.
Let $\mathcal{P}'$ be the set denoted by $\mathcal{P}$  in \cite[Section 2.5]{Marshall2017}, that is, an infinite subset of the totality of finite places of $F$.
A map $\mathfrak{N}':\mathcal{P}'\to\N$ is defined by the order of the residue field of $F_v$.
Then, $\uG(F_v)$ is split for each $v\in \mathcal{P}'$ and \eqref{eqnote1} holds by the prime ideal theorem and the Chebotarev density theorem. Now,  put
\bq
\label{eq:module}
\mathcal{H}:=\langle T_{K_0\alpha K_0} \mid   \alpha\in \uG(F_v), \; \; v\in\mathcal{P}'   \rangle.
\eq
Note that $\uG(F_v)\subset H(\Q_p)$ if $v|p$, and by \cite[Proof of Theorem 2.8.2 (2)]{Miyake} we have $(T_{K_0\alpha K_0})^*=T_{K_0\alpha^{-1} K_0}\in \mathcal{H}$.
Since $P_0$ commutes with all Hecke operators and each automorphic representation of $H(\bA)$ factors as a tensor product of irreducible unitary representations of $\uG(F_v)$ for all places $v$  \cite{Flath}, there exists an orthonormal basis $\{\phi_j\}_{j\geq 0}$ of $\L^2(\Gamma\bsl G)$ consisting of simultaneous eigenfunctions for $P$ and all $\T \in \mathcal{H}$. Now, let $\phi_{j_0} \in \L^2_\sigma(\Gamma \bsl G)$ be fixed. Applying the results in \cite{Marshall2017} to the function $\psi:=\phi_{j_0,\bA}$, that is also denoted by $\psi$ there, one can verify the assumptions of Lemma \ref{lem:general} for $\phi_{j_0}$. Indeed, by \cite[Propositions 6.1]{Marshall2017}, for each place $v\in \mathcal{P}'$ there exists a Hecke operator $\T_v\in\mathcal{H}$ such that $\T_v\psi=\psi$ holds and $\T_v$ is a linear combination of operators $T_{K_0\alpha K_0}$ with $\alpha\in \uG(F_v)$.
In view of \eqref{eq:correspondence} we can identify $\T_v$ with a non-adelic Hecke operator $\T_v'$ on $\L^2_\sigma(\Gamma\bsl G)$ such that 
\[
\T_v'\phi_{j_0} = \T_v\phi_{j_0,\bA} =\phi_{j_0,\bA} =\phi_{j_0}. 
\]
Similarly, set $\T_{N,x}:=\sum_{v\in\mathcal{Q}_{N,x}'} \T_v$, and denote the corresponding non-adelic Hecke operators by $\T_{N,x}'$, where $\mathcal{Q}_{N,x}'$ is chosen as the set denoted by $\mathcal{Q}_N$ in \cite[Section 3.4]{Marshall2017}. By the convolution on $H(\bA_\mathrm{fin})$, there exist $n\in\N$, $b_k\in\C$, and $\omega_k \in H(\bA_\mathrm{fin})$ such that
\[
\T_{N,x}\circ (\T_{N,x})^*=\sum_{k=1}^n b_k T_{K_0\omega_k K_0}.
\]
The corresponding  $\l \in \N$, $a_u\in\C$, and $\alpha_u \in C(\Gamma)$ in the decomposition of  $\T_{N,x}'\circ (\T_{N,x}')^*$ in Lemma \ref{lem:general} are then obtained from this equality via the identification \eqref{eq:correspondence}.
Finally, the upper bounds \eqref{eq:19.03.2017} in Lemma \ref{lem:general} can be verified using \eqref{eq:adeledisc}, \eqref{eq:correspondence} and the arguments in \cite[Section 3]{Marshall2017}, completing the proof of the theorem.
\end{proof}

\begin{ex}[\bf Equivariant subconvex bounds for $ \SL(n,\R)$]\label{ex:1}
Choose a central division algebra $D$ of index $n$ over $\Q$ such that $D\otimes \R\cong M(n,\R)$.
It is well known that the equivalence classes of central division algebras over $\Q$ are parameterized by the Brauer group $\mathrm{Br}(\Q)$, which can be realized as the set
\[
\Big\{(a,x) \mid a\in\{0,{1}/{2}\},\quad x=(x_p)\in\bigoplus_p \Q/\Z ,\quad a+\sum_p x_p=0 \mod \Z   \Big\},
\]
where $p$ runs over all primes, via the Brauer-Hasse-Noether theorem, compare \cite[Theorem 1.12]{PR}.
If we choose a prime $p_1$ and a parameter $(0,x)$ in $\mathrm{Br}(\Q)$ such that $x_{p_1}=a/n$ and $a$ is prime to $n$, then there is a central division algebra $D$ corresponding to $(0,x)$ and satisfying $D\otimes \R\cong M(n,\R)$.
A semisimple algebraic group $H$ over $\Q$ is then defined by $H:=\SL(1,D)$. Clearly,  $G:=H(\R)\cong \SL(n,\R)$, {$K:=\SO(n)$}, and $\Gamma:=H(\Q)\cap(H(\R)K_0) $ is cocompact for any open compact subgroup $K_0$ of $G(\mathbb{A}_\mathrm{fin})$, while  $H$ satisfies the condition (WS) and is simply connected.
%It is known that $G$ is connected if $n$ is odd, but we have $[G:G^0]=2$ if $n$ is even. Hence, in the case $n$ is even, we suppose $\Gamma$ contains an element which belongs to $G\setminus G^0$.
Then, the assumptions of Theorem \ref{thm:general1} hold,  and for any Hecke--Maass form $\phi_j$  in $\L^2_\sigma(\Gamma\bsl G)$ with eigenvalue  $\lambda_j$ one obtains the subconvex bound
\[
\|\phi_j\|_\infty\ll \sqrt {d_\sigma \sup_{u \leq \lfloor \frac{n^2-n+4}4 \rfloor} \norm{D^u \sigma}_\infty} \, \,  \lambda_j^{\frac{n^2+n-4}{4m}-\delta}
\]
for some $\delta>0$.
\end{ex}

\begin{ex}[\bf Equivariant subconvex bounds for $\SL(n,\C)$] Let $D$ denote a central division algebra over a number field $F$.
Then $H:=\mathrm{Res}_{F/\Q}\SL(1,D)$ is a simply connected semisimple algebraic group over $\Q$.
The Brauer-Hasse-Noether theorem ensures that there exist various division algebras $D$ such that $H$ satisfies the assumptions of Theorem \ref{thm:general1}.
For example, by choosing suitable parameters in the Brauer group, one gets a central division algebra $D$ of index $n$ over an imaginary quadratic extension $F$ of $\Q$ such that $H(\R)$ is isomorphic to $\SL(n,\C)$.
Since $\SL(n,\C)$ satisfies the condition (WS), Theorem \ref{thm:general1} yields subconvex bounds for $\Gamma\bsl \SL(n,\C)$, where $\Gamma$ is defined by an open compact subgroup $K_0$ in $H(\bA_\mathrm{fin})$.
\end{ex}

\begin{ex}[\bf Equivariant subconvex bounds for $\SU(n,n,\R)$]
There exists a central division algebra $D$ over an imaginary quadratic extension $F$ of $\Q$ with a $F/\Q$-involution such that $H:=\SU(1,D)$ satisfies $G:=H(\R)\cong \SU(n,n,\R)$, see \cite[Theorem 8.1]{Scharlau}.
Since $H$ obviously satisfies the assumptions of Theorem \ref{thm:general1}, one obtains equivariant subconvex bounds for $\Gamma\bsl G$ in this case.
\end{ex}

%In particular, one can obtain an equivariant subconvex bound on a compact congruence arithmetic hyperbolic $3$-manifold as a generalization of \cite[Corollary 1.5]{Marshall2017}.
%Further examples can be obtained by considering division algebras with an involution \cite{Knus-Merkurjev-Rost-Tignol}.

\subsection{Non-equivariant subconvex bounds}
\label{sec:non-equiv general}

We shall now prove non-equivariant subconvex bounds on arithmetic quotients of semisimple algebraic groups without the condition (WS).
Let $H$ be a connected semisimple algebraic group over $\Q$ and choose any open compact subgroup $K_0$ of $H(\bA_\mathrm{fin})$.
We then have the following 
\begin{thm}
\label{thm:28.5.2017}
Put $G:=H(\R)$ and $\Gamma:=H(\Q)\cap(H(\R)K_0)$.
Assume that $H(\bA)=H(\Q)(H(\R)K_0)$ and that $\Gamma\bsl G$ is compact. Let $P_0$ be an elliptic left-invariant differential operator on $G$ of degree $m$ that gives rise to a  positive and symmetric operator on $\Gamma \bsl G$  with strictly convex cospheres $S^\ast_x(\Gamma \bsl G)$. Then, there exist a submodule $\mathcal{H}$ of $H^{\chi=1}_{\Xi=H(\Q)}$ and a constant $\delta>0$ such that there is an orthonormal basis $\{\phi_j\}_{j\in\N}$ of $\L^2(\Gamma\bsl G)$ consisting of simultaneous eigenfunctions for $P$ and all $\T \in \mathcal{H}$, so that  for each $\phi_j$ with spectral eigenvalue $\lambda_j$ one has
\[
\|\phi_j\|_\infty \ll \lambda_j^{\frac{\dim G-1}{2m}-\delta}.
\]
\end{thm}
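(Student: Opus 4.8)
The plan is to deduce the theorem from Lemma \ref{lem:general} in the case $\mathfrak{L}^2 = \L^2_\chi(\Gamma\bsl G)$ with $\chi = 1$, exactly paralleling the proof of Theorem \ref{thm:general1} but this time using the lattice point counting function $\M(x,\beta,\delta)$ on the group $\Gamma\bsl G$ itself (rather than $M(x,\beta,\delta)$ on $\Gamma\bsl G/K$), and without invoking condition (WS). First I would set $\Xi := H(\Q)$, identify functions on $\Gamma\bsl G$ with functions on $H(\Q)\bsl H(\bA)/K_0$ via \eqref{eq:identific}, and recall from \eqref{eq:adeledisc}--\eqref{eq:correspondence} that every adelic Hecke operator $T_{K_0\alpha K_0}$ corresponds to a finite sum of non-adelic Hecke operators $T_{\Gamma\beta_i\Gamma}$. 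One then takes for $\mathcal{P}'$ an infinite set of finite places $v$ of $\Q$ (i.e. primes $p$) over which $H$ is suitably unramified and split, and sets $\mathfrak{N}'(p) := p$; condition \eqref{eqnote1} follows from the prime number theorem. Define $\mathcal{H} := \langle T_{K_0\alpha K_0} \mid \alpha \in H(\Q_p),\ p\in\mathcal{P}'\rangle$; since $P_0$ commutes with all Hecke operators and automorphic representations factor as restricted tensor products over the places, there is an orthonormal basis $\{\phi_j\}$ of $\L^2(\Gamma\bsl G)$ of joint eigenfunctions of $P$ and all $\T\in\mathcal{H}$, and $\mathcal{H}$ is closed under adjoints since $(T_{K_0\alpha K_0})^* = T_{K_0\alpha^{-1}K_0}$.

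The substantive point is to verify the two bounds in \eqref{eq:19.03.2017} for the operators $\T'_{N,x}\circ(\T'_{N,x})^*$ built from the amplifier, \emph{with $\mathfrak{M} = \M$ the counting function on $\Gamma\bsl G$}. For this I would again invoke Marshall's local constructions: at each $p\in\mathcal{P}'$ one produces $\T_p\in\mathcal{H}$ with $\T_p\phi_{j_0,\bA} = \phi_{j_0,\bA}$ lying in a fixed bounded family of operators $T_{K_0\alpha K_0}$ with $\alpha\in H(\Q_p)$ of controlled size, and sets $\T_{N,x} := \sum_{p\in\mathcal{Q}'_{N,x}}\T_p$ with $\mathcal{Q}'_{N,x}$ chosen (removing at most $O(\log N)$ bad primes) so that the relevant diophantine estimate holds; convolution on $H(\bA_{\mathrm{fin}})$ writes $\T_{N,x}\circ(\T_{N,x})^* = \sum_k b_k T_{K_0\omega_k K_0}$, and \eqref{eq:correspondence} converts this to $\sum_u a_u T^{\chi}_{\Gamma\alpha_u\Gamma}$. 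The second bound in \eqref{eq:19.03.2017}, on $\sum_u|a_u|\,|\Gamma_\chi\bsl\Gamma\alpha_u\Gamma|$, is the degree (trace-norm) estimate for the Hecke algebra and is insensitive to whether one works on $G$ or $G/K$. The first bound, $\sum_u|a_u|\,|\M(x,\alpha_u,N^{-A_2})| \ll N^{2-2\kappa}$, is the heart of the matter: it is a lattice point count for how many $\alpha$ in the relevant double cosets move $x$ within distance $N^{-A_2}$ in the Riemannian metric on $\Gamma\bsl G$.

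The main obstacle, and the reason (WS) can be dropped here, is precisely this diophantine input. Counting $\alpha$ with $\dist(x,\alpha\cdot x) < \delta$ on $\Gamma\bsl G$ is genuinely easier than on $\Gamma\bsl G/K$: a small Riemannian ball in $G$ around $x$, intersected with a $\Gamma$-double coset, forces $\alpha$ to be close to a \emph{specific} element rather than merely to the compact fiber $xKx^{-1}$, so the count is dominated by the ``central'' contribution $\alpha\cdot x = x$ and one gets power savings without needing the fine distribution results (based on period bounds and quasi-splitness) that (WS) supplies. Concretely I would show $\M(x,\alpha,N^{-A_2})$ is bounded, for $A_2$ large enough in terms of $\dim G$ and the growth of the $\alpha_u$, by the number of $\alpha$ with $\alpha\cdot x = x$ plus a negligible term, and that this is absorbed into $N^{2-2\kappa}$ for a small but positive $\kappa$; this uses only that $\Gamma$ is a lattice (discreteness) together with Marshall's bounds on the supports of the amplified operators. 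Granting \eqref{eq:19.03.2017}, Lemma \ref{lem:general} in the case $\mathfrak{L}^2 = \L^2_\chi(\Gamma\bsl G)$ directly yields $\|\phi_j\|_\infty \ll \lambda_j^{\frac{\dim G-1}{2m}-\delta}$ with $\delta = \frac{\dim G-1}{2m}\cdot\frac{\kappa}{A-2+2\kappa}$ for the appropriate $A = \tfrac12 A_2 + A_1$, after choosing $N\sim\mu^B$ as in that lemma, completing the proof.
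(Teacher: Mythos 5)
Your proposal takes essentially the same approach as the paper: reduce to Lemma \ref{lem:general} with $\chi=1$ and $\mathfrak{L}^2=\L^2(\Gamma\bsl G)$, use Marshall's amplifier construction to produce $\T_p$ and $\T_{N,x}$ (noting that the second bound in \eqref{eq:19.03.2017} is a degree estimate insensitive to $K$), and observe that the first bound is trivialized on $\Gamma\bsl G$ because a small ball around $x$ intersects the relevant double cosets only in $\gamma=1$, so (WS) is unnecessary. Your identification of the reason (WS) can be dropped --- that on $\Gamma\bsl G$ one bounds proximity to a point rather than to the orbit $xK$ --- is exactly the paper's point; the only small technical gap is that the mechanism producing the diophantine gap is not ``discreteness of $\Gamma$'' alone (the elements $\gamma\in\Gamma\alpha_u\Gamma$ need not lie in $\Gamma$) but rather Marshall's denominator bound $\|\gamma\|_f\ll N^{A_2'}$ combined with the Euclidean distance $\dist_1$ on a linear embedding $G\subset M(m,\R)$, which forces any non-identity rational $\gamma$ in the support to satisfy $\dist_1(\gamma x,x)\gg N^{-A_2'}$; taking $A_2>A_2'$ then works directly, and no primes actually need to be removed (the paper takes $\mathcal{Q}'_{N,x}=\{p\in\mathcal{P}':p\le N\}$ for all $x$).
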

\begin{proof}
{To prove this theorem, we need an explicit distance on $G$. We may assume that $H$ is a closed subgroup of $\SL(m)$ over $\Q$ for some sufficiently large $m \in \N$, so that $G=H(\R)$ becomes a closed subgroup of $\SL(m,\R)$ with respect to the topology induced from the Euclidean topology on $\R^{m^2}$, compare \cite[Chapter 3]{PR}.  Note that $H(\R)$ might consist of finitely many connected components with respect to the usual topology, even if $H$ is connected in the sense of  Zariski \cite[Corollary 1]{PR}. One then  defines on $M(m,\R)$ the  Euclidean distance
\bqn 
\dista(x,y):=\|x-y\|, \qquad \|x\|:=\mathrm{Tr}({}^t\!xx), \qquad x,y\in M(m,\R),
\eqn
obtaining a distance on $G$ by the inclusions $G\subset \SL(m,\R) \subset M(m,\R)$. In fact,  the distance $\dist$ is locally equivalent to the distance $\dista$. Indeed, $\dista$ is equivalent to $\dist$ in  a small neighborhood $\mathcal{U}$ of the identity. Furthermore, for fixed $g\in G$ one computes
\bqn
 \dista(gx,gy) \leq \|g\|\; \dista(x,y), \qquad \dista(gx,gy)\geq  \|g^{-1}\|^{-1}\; \dista(x,y),
 \eqn
so that  $\dista(x,y)$ is equivalent to the distance $(x,y)\mapsto\dista(gx,gy)$ on $G$. The assertion now follows by covering $G$ by  translates of $\mathcal{U}$.}

The first assertion follows from the corresponding argument  in Theorem \ref{thm:general1}. It remains to show that the assumptions in Lemma \ref{lem:general} are satisfied for the module $\mathcal{H}$ given in \eqref{eq:module}, for which we shall follow  the considerations in \cite{Marshall2017}.  Let us choose the same norm $\|\;\; \|^*$ as in \cite[Section 2.2]{Marshall2017} on the group of cocharacters of a maxial torus over $\overline{\Q}$, and regard  $\|\;\; \|^*$ as a norm on the cocharacters of each $\Q_p$-torus by conjugation.
Let $\mathcal{P}'$ be the set denoted by  $\mathcal{P}$ in \cite[Section 2.5]{Marshall2017} for $F=\Q$ and $\uG=H$. Then $\mathcal{P}'$ is an infinite  set of prime numbers, \eqref{eqnote1} holds, and for each prime $p\in \mathcal{P}'$ the group $H(\Q_p)$ is split.  Furthermore,  a Hecke operator $\tau(p,\mu)$ is defined by the product of $p^{-\|\mu\|^*}$ with the characteristic function of $H(\Z_p)\mu(p)H(\Z_p)$, where $\mu$ is a cocharacter on a suitable maximal split torus $\underline{T}_p$ in $H(\Q_p)$. In addition, several conditions are imposed on  $H$, $\mathcal{P}'$, and $\underline{T}_p$, and we refer the reader to \cite[Section 2]{Marshall2017} for  details. By \cite[Proposition 6.1]{Marshall2017}, there exists for each $p\in\mathcal{P}'$ a Hecke operator $\T_p$  such that
\[
\T_p\phi_{j,\bA}=\phi_{j,\bA},\quad \T_p=\sum_{\|\mu\|^*\leq R}a(p,\mu)\, \tau(p,\mu), \quad \T_p\T_p^*=\sum_{\|\mu\|^*\leq R}b(p,\mu)\,  \tau(p,\mu),
\]
\[
a(p,\mu)\ll 1 ,\quad a(p,0)=0 ,\quad b(p,\mu)\ll 1
\]
for some constant $R\in\N$.
Now, choose a compact subset $\Omega$ of $G$ such that $G=\Gamma\Omega$, let  $x$ be an element in $\Omega$, and set $\T_{N,x}:=\sum_{p\in\mathcal{P}',\, p\leq N} \T_p$.\footnote{Note that  $\T_{N,x} $ does actually not depend on $x$, but we preferred to keep the notation of Lemma \ref{lem:general}.} % where $Q_{N,x}'$ denotes the set of primes dividing $Q\in\N_*$ given in \cite[Corollary 3.7]{Marshall2017}.
In order to verify the necessary conditions in  Lemma \ref{lem:general} we proceed  as in the proof of Theorem \ref{thm:general1}, and  let $\T_p'$ and $\T_{N,x}'$ be non-adelic Hecke operators corresponding to $\T_p$ and $\T_{N,x}$, respectively.
For $\gamma\in H(\Q)\subset \SL(m,\Q)$, let $\|\gamma\|_f$ denote the least common multiple of denominators of components of $\gamma$.
By \cite[Corollary 3.6]{Marshall2017}, one has $\|\gamma\|_f\ll N^{A_2'}$ for some $A_2'>0$ if $\gamma \in H(\Q)\cap\supp (\T_{N,x}(\T_{N,x})^*)$, where $\supp(\T_{N,x}(\T_{N,x})^*)$ means the support of $\T_{N,x}(\T_{N,x})^*$ in $H(\mathbb{A}_\mathrm{fin})$, and  the second bound in \eqref{eq:19.03.2017} follows.
{Furthermore, for the distance $\dist_1$, one can show that for some $A_2>A_2'$ the inequality $\mathrm{dist}_1(\gamma x,x)<c_1 N^{-A_2}$ does not hold for any non-trivial element $\gamma\in H(\Q)\cap \mathrm{supp}(\T_{N,x}(\T_{N,x})^*)$, where we choose $c_1>0$ such that $c_1 \dist(y_1,y_2)< \mathrm{dist}_1(y_1,y_2)$ for all $y_1,y_2\in \Gamma\bsl G$.}
Indeed, if $\gamma\in H(\Q)\cap \mathrm{supp}(\T_{N,x}(\T_{N,x})^*)$ is such that $\mathrm{dist}(\gamma x,x)< N^{-A_2}$,  one necessarily must have $\gamma=1$ together with  $\T_p(x^{-1}\gamma x)=\T_p(1)=0$.
It follows that
\[
\sum_{u=1}^l |a_u||\mathcal{M}(x,\alpha_u,N^{-A_2})| \leq \sum_{p\leq N} \sum_{\|\mu\|^*\leq R} | b(p,\mu)| \, \tau(p,\mu)(1)\ll N
\]
%\begin{multline*}\sum_{u=1}^l |a_u||M(x,\alpha_u,N^{-A_2})| \leq \sum_{\gamma\in H(\Q) , \; \mathrm{dist}(\gamma x,x)< N^{-A_2}}\Big\{ \sum_{p\in\mathcal{Q}_{N,x}} \sum_{\|\mu\|^*\leq R} | b(p,\mu)| \, T(p,\mu)(x^{-1}\gamma x) \\ \qquad + \sum_{p\neq q\in\mathcal{Q}_{N,x}}\sum_{\|\mu\|^*,\|\nu\|^*\leq R}|a(p,\mu)|\, |a(q,\nu)| \, (T(p,\mu) T(q,-\nu))(x^{-1}\gamma x)\Big\}\\ \leq \sum_{p\in\mathcal{Q}_{N,x}} \sum_{\|\mu\|^*\leq R} | b(p,\mu)| \, p^{-\|\mu\|^*} +\sum_{p\neq q\in\mathcal{Q}_{N,x}}\sum_{\|\mu\|^*,\|\nu\|^*\leq R}|a(p,\mu)|\, |a(q,\nu)| \, p^{-\|\mu\|^*}\, q^{-\|\nu\|^*}\ll N+N^{2-2\kappa},\end{multline*}
since
\[
\T_{N,x}(\T_{N,x})^*=\sum_{p\leq N} \sum_{\|\mu\|^*\leq R}  b(p,\mu) \, \tau(p,\mu) +\sum_{p\neq q \leq N}\sum_{\|\mu\|^*,\|\nu\|^*\leq R}a(p,\mu)\, a(q,\nu) \, \tau(p,\mu)\, \tau(q,-\nu),
\]
yielding the first bound in \eqref{eq:19.03.2017}. 
Thus, the proof is completed.
\end{proof}

\begin{ex}[\bf Non-equivariant subconvex bounds for division algebras]
Let $D$ be a central division algebra over a number field $F$ and $H:=\mathrm{Res}_{F/\Q}\SL(1,D)$.
If $H$ has no $\R$-simple components $H'$ for which  $H'(\R)$ is compact, then Theorem \ref{thm:28.5.2017} yields a subconvex bound for $\Gamma\bsl G$ with $G$ and $\Gamma$ as in the previous theorem. If   in particular $G=\SL(n,\R)$, there exists a constant $\delta>0$ such that one has the subconvex bound
\[
\|\phi_j\|_\infty\ll  \lambda_j^{\frac{n^2-2}{2m}-\delta}
\]
for any Hecke--Maass form $\phi_j$ in $\L^2(\Gamma\bsl G)$ with eigenvalue  $\lambda_j$. It is  also possible to consider $H:=\mathrm{Res}_{F/\Q}\SU(1,D)$, where $D$ is a central division algebra over a quadratic extension $E$ of $F$ equiped with a $E/F$-involution. Under the assumption that $G:=H(\R)$ has no $\R$-simple components $H'$ for which $H'(\R)$ is compact, Theorem \ref{thm:28.5.2017} then implies subconvex bounds for $\Gamma\bsl G$. Regarding the existence of  division algebras with $E/F$-involutions satisfying our assumptions we refer the reader to \cite[Theorem 8.1]{Scharlau}.
\end{ex}

\subsection{Automorphic forms and representation theoretic interpretation of the results}
\label{sec:autreprII} 
To close, let us indicate how our results fit into the general theory of automorphic forms \cite{borel-jacquet}. With $G$, $K$, and $\Gamma$ as above let us recall the following 

\begin{definition}
A smooth function $f:G\rightarrow \C$ satisfying
\begin{enumerate}
\item[(A1)] $f(\gamma g) =f(g) $ for all $g \in G$ and $\gamma \in \Gamma$,
\item[(A2)] $f$ is $K$-finite on the right,
\item[(A3)] $f$ is $\mathcal{Z}$-finite, where $\mathcal{Z}$ denotes the center of the universal envelopping algebra $\mathfrak{U}(\g_\C)$ of the complexification of the Lie algebra $\g$ of $G$,
\end{enumerate}
 is called an \emph{automorphic form on $G$ for $\Gamma$}.
\end{definition}

This definition  implies that  with respect to   the decomposition \eqref{eq:Gdecomp} each automorphic form $f$ is contained already in finitely many constituents,  so that   $f\in \bigoplus_{k=1}^l m(\pi_k,\Gamma)\cdot \pi_k $ for suitable $\pi_k$, compare \cite[Corollaries 8.14 and 10.37]{knapp}. Hence,  $p(\Ccal):=\prod_{k=1}^l(\Ccal-\mu_k)$, where $\mu_k$ is the Casimir eigenvalue of $\pi_k$, represents a polynomial that  annihilates $f$. On the other hand,  (A2) implies that  $f$ is a finite sum of functions $f_\sigma$ belonging to a  specific  $K$-type $\sigma$, and by \eqref{eq:casimir-laplace} one deduces 
\[
p(\Ccal)f_\sigma=\prod_{k=1}^l(2dR(\Omega_K)-\Delta-\mu_k)f_\sigma=\prod_{k=1}^l(2\mu_\sigma-\Delta-\mu_k)f_\sigma =:q(\Delta) f_\sigma
\]
where $\mu_\sigma$ denotes the eigenvalue of $dR(\Omega_K)$ on $\sigma$, and $q$ is a polynomial.  Thus, $f$ is essentially given by a sum of Hecke--Maass forms in the sense of this paper, because $q(\Delta)$ is an elliptic differential operator and any subspace defined by a $K$-type and a Casimir eigenvalue is finite dimensional by Harish-Chandra's theorem \cite[Theorem 1.7]{borel-jacquet}.
Consequently, Theorems \ref{thm:general1} and \ref{thm:28.5.2017} can be rephrased as follows.
\begin{thm}
\label{thm:26.9.2017}
Let $H$ be a connected semisimple algebraic group over $\Q$ and $K_0$ an open compact subgroup of $H(\bA_\mathrm{fin})$.
Set $G:=H(\R)$ and $\Gamma:=H(\Q)\cap (H(\R)K_0)$, and suppose that $H(\bA)=H(\Q)(H(\R)K_0)$  and $H(\Q)\bsl H(\bA)$ is compact.
Then, there exist a submodule $\mathcal{H}$ of $H^{\chi=1}_{\Xi=H(\Q)}$ and a constant $\delta>0$, which are independent of $\sigma\in\widehat{K}$, such that
\begin{enumerate}
\item there exists an orthonormal basis $\{\phi_j\}_{j\in\N}$ of $\L^2_\sigma(\Gamma\bsl G)$ which consists of simultaneous eigenfunctions for the Casimir operator $\Ccal$ and all Hecke operators $\T \in \mathcal{H}$;
\item for each $\phi_j$ with Casimir eigenvalue $\mu_j$ one has
\[
\|\phi_j\|_\infty \ll \sqrt {d_\sigma \sup_{u \leq \lfloor \frac{\dim K}2+1\rfloor} \norm{D^u \sigma}_\infty} \, \, (-\mu_j+2\mu_\sigma)^{\frac{\dim G/K-1}{4}-\delta},
\]
provided that  $H=\mathrm{Res}_{F/\Q}\underline{G}$ and {\rm (WS)} is fulfilled, while in general
\[
\|\phi_j\|_\infty \ll  (-\mu_j+2\mu_\sigma)^{\frac{\dim G-1}{4}-\delta}
\]
  $\mu_\sigma$ being the eigenvalue of $dR(\Omega_K)$ on $\sigma$. {If $K=T$ is a torus,
    \[
\|\phi_j\|_\infty \ll  \, (-\mu_j+2\mu_\sigma)^{\frac{\dim G/K-1}{4}-\delta}.
\]}

\end{enumerate}
\end{thm}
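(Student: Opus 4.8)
The plan is to deduce Theorem \ref{thm:26.9.2017} from Theorems \ref{thm:general1} and \ref{thm:28.5.2017} by applying those results with $P_0$ equal to the Beltrami--Laplace operator $\Delta$ on $G$, and then converting the resulting estimates from the elliptic picture to the Casimir picture by means of \eqref{eq:casimir-laplace}.

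First I would check that $\Delta = dR(\Theta)$, with $\Theta = -\Omega + 2\Omega_K$, fulfils every hypothesis imposed on $P_0$ in those theorems. It is a left-invariant differential operator on $G$ of degree $m = 2$; by \eqref{eq:casimir} one has $\Theta = -\sum_i X_i^2 - \sum_i Y_i^2$, so that the operator it induces on $\L^2(\Gamma\bsl G)$ equals $\sum_i dR(X_i)^\ast dR(X_i) + \sum_i dR(Y_i)^\ast dR(Y_i)$, the right regular representation being unitary; hence it is nonnegative and symmetric. Since $X_1,\dots,X_{\dim\p},Y_1,\dots,Y_{\dim\k}$ is an orthonormal basis of $\g$ for $\eklm{\cdot,\cdot}_\theta$, the principal symbol of $\Delta$ is $p(x,\xi) = \norm{\xi}_x^2$, a positive-definite quadratic form, so that the cospheres $S^\ast_x(\Gamma\bsl G)$ are ellipsoids and in particular strictly convex (compare the footnote to Proposition \ref{prop:31.05.2017}). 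Finally $\Omega$ is $\Ad(G)$-invariant and $\Omega_K$ is $\Ad(K)$-invariant, so $\Theta$ is $\Ad(K)$-invariant and $\Delta$ commutes with the right regular $K$-representation; being left-invariant, it also commutes with every Hecke operator.

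Taking $P_0 := \Delta$, Theorem \ref{thm:general1} (when $H = \mathrm{Res}_{F/\Q}\underline{G}$ and {\rm (WS)} holds) respectively Theorem \ref{thm:28.5.2017} (in general) furnishes a submodule $\mathcal{H} \subset H^{\chi=1}_{\Xi = H(\Q)}$ and a constant $\delta > 0$, both independent of $\sigma$, together with an orthonormal basis $\{\phi_j\}_{j\in\N}$ of $\L^2(\Gamma\bsl G)$ consisting of simultaneous eigenfunctions of $\Delta$ and all $\T \in \mathcal{H}$. Since $\Delta$ and the elements of $\mathcal{H}$ commute with the right $K$-representation and each of their joint eigenspaces is finite-dimensional, this basis may be chosen compatible with the Peter--Weyl decomposition \eqref{eq:PW}; the $\phi_j$ lying in $\L^2_\sigma(\Gamma\bsl G)$ then form an orthonormal basis of that subspace. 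On $\L^2_\sigma(\Gamma\bsl G)$ the operator $dR(\Omega_K)$ acts by the scalar $\mu_\sigma$, so by \eqref{eq:casimir-laplace} one has $\Ccal = -\Delta + 2\mu_\sigma\,\id$ there; hence these $\phi_j$ are simultaneous eigenfunctions of $\Ccal$ and $\mathcal{H}$, and their Casimir eigenvalue $\mu_j$ is related to the Beltrami--Laplace eigenvalue by $\lambda_j = -\mu_j + 2\mu_\sigma$. This establishes assertion (1).

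Assertion (2) is then obtained by inserting $m = 2$ and $\lambda_j = -\mu_j + 2\mu_\sigma$ into the conclusions of Theorems \ref{thm:general1} and \ref{thm:28.5.2017}: under the hypotheses of Theorem \ref{thm:general1} this yields $\|\phi_j\|_\infty \ll \sqrt{d_\sigma \sup_{u \le \lfloor \frac{\dim K}{2}+1\rfloor}\norm{D^u\sigma}_\infty}\,(-\mu_j+2\mu_\sigma)^{\frac{\dim G/K-1}{4}-\delta}$, in general $\|\phi_j\|_\infty \ll (-\mu_j+2\mu_\sigma)^{\frac{\dim G-1}{4}-\delta}$, and, when $K = T$ is a torus, $\|\phi_j\|_\infty \ll (-\mu_j+2\mu_\sigma)^{\frac{\dim G/K-1}{4}-\delta}$ via the toric refinement recorded in Lemma \ref{lem:general}. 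Once the earlier theorems are available the whole argument is essentially bookkeeping, so there is no serious obstacle; the one point I would take care over is the verification that the elliptic operator $\Delta$ satisfies the strict-convexity requirement on its cospheres, which holds automatically because its principal symbol is the Riemannian metric.
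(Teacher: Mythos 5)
Your proposal is correct and takes essentially the same route as the paper: Section \ref{sec:autreprII} deduces Theorem \ref{thm:26.9.2017} from Theorems \ref{thm:general1} and \ref{thm:28.5.2017} applied with $P_0=\Delta$, using \eqref{eq:casimir-laplace} to convert $\lambda_j$ into $-\mu_j+2\mu_\sigma$. Your explicit verification that $\Delta$ meets the hypotheses (degree $2$, symmetric, $\Ad(K)$-invariant, strictly convex cospheres) is the same bookkeeping the paper leaves implicit.
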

\qed

\providecommand{\bysame}{\leavevmode\hbox to3em{\hrulefill}\thinspace}
\providecommand{\MR}{\relax\ifhmode\unskip\space\fi MR }
% \MRhref is called by the amsart/book/proc definition of \MR.
\providecommand{\MRhref}[2]{%
  \href{http://www.ams.org/mathscinet-getitem?mr=#1}{#2}
}
\providecommand{\href}[2]{#2}

%%%%%%%%%%%%%%%%%% Bibliography %%%%%%%%%%%%%%%%%%%%%%%

%\bibliography{bibliography}
%\bibliographystyle{amsplain}

 %%%%%%%%%%%%%%%%%%%%%%%%%%%%%%%%%%%%%%%%%%%%%%%%%%

\end{document}